\theoremstyle{plain}
\newtheorem{theorem}{Theorem}[section]
\newtheorem{proposition}[theorem]{Proposition}
\newtheorem{corollary}[theorem]{Corollary}
\newtheorem{lemma}[theorem]{Lemma}
\theoremstyle{definition}
\newtheorem{definition}[theorem]{Definition}
\theoremstyle{remark}
\newtheorem*{remark}{Remark}
\theoremstyle{remark}
\numberwithin{equation}{section}
\DeclareMathOperator{\LCD}{LCD}
\DeclareMathOperator{\Spread}{Spread}
\DeclareMathOperator{\dist}{dist}
\DeclareMathOperator{\loc}{Loc}
\def \N {\mathbb{N}}
\def \R {\mathbb{R}}
\def \C {\mathbb{C}}
\def \Z {\mathbb{Z}}
\def \E {\mathbb{E}}
\def \P {\mathbb{P}}
\def \DD {\mathcal{D}}
\def \EE {\mathcal{E}}
\def \II {\mathfrak{Im}}
\def \NN {\mathcal{N}}
\def \LL {\mathcal{L}}
\def \RR {\mathfrak{Re}}
\def \a {\alpha}
\def \b {\beta}
\def \g {\gamma}
\def \e {\varepsilon}
\def \eps {\varepsilon}
\def \d {\delta}
\def \l {\lambda}
\def \< {\langle}
\def \> {\rangle}
\def \^ {\widehat}
\def \dist {{\rm dist}}
\def \Span {{\rm span}}
\def \Prob {{\mathbb{P}}}
\def \supp {{\rm supp}}
\def \Comp {{\mathit{Comp}}}
\def \Incomp {{\mathit{Incomp}}}
\newcommand{\norm}[1]{\left \| #1 \right \|}
\def \etc {,\ldots,}
\begin{document}

\title[Eigenvector delocalization for non-Hermitian random matrices]{Eigenvector delocalization for non-Hermitian random matrices and applications}

\author{Kyle Luh \and Sean O'Rourke}

\address{
  Center of Mathematical Sciences and Applications, Harvard University}
\email{kluh@cmsa.fas.harvard.edu}

\address{Department of Mathematics, University of Colorado at Boulder, Boulder, CO 80309   }
\email{sean.d.orourke@colorado.edu}

\thanks{K. Luh has been supported in part by the National Science Foundation under Award
No. 1702533}

\thanks{S. O'Rourke has been supported in
part by NSF grants ECCS-1610003 and DMS-1810500.}

\date{\today}

\begin{abstract}
Improving upon results of Rudelson and Vershynin, we establish delocalization bounds for eigenvectors of independent-entry random matrices.  In particular, we show that with high probability every eigenvector is delocalized, meaning any subset of its coordinates carries an appropriate proportion of its mass.  Our results hold for random matrices with genuinely complex as well as real entries.  As an application of our methods, we also establish delocalization bounds for normal vectors to random hyperplanes.  The proofs of our main results rely on a least singular value bound for genuinely complex rectangular random matrices, which generalizes a previous bound due to the first author, and may be of independent interest.  
\end{abstract}

\maketitle

\section{Introduction}

Let $G$ be an $n \times n$ random matrix with independent and identically distributed (iid) entries whose real and imaginary parts are independent standard normal random variables.  It is not difficult to see that the distribution of $G$ is invariant under multiplication (either on the right or left) by unitary matrices.  Among others, this implies that the unit eigenvectors of $G$ are uniformly distributed on the complex unit sphere $S_{\mathbb{C}}^{n-1}$.  

For an $n \times n$ independent-entry matrix $A$ with non-Gaussian entries no such invariance property exists, and the distribution of the eigenvectors is not easily described.  In fact, if the entries of $A$ are discrete random variables, then the eigenvectors cannot have continuous distribution.  However, the universality phenomenon in random matrix theory asserts that, under some appropriate regularity conditions on the entries, the eigenvectors of $A$ should be approximately uniform on the unit sphere for large enough dimension $n$.  As such, we expect each eigenvector of $A$ to be have asymptotically the same properties as a vector uniformly distributed on the unit sphere.  

The goal of this note is to quantify some of these properties for the eigenvectors of $A$.  
Let us begin by recalling some delocalization properties for random vectors uniformly distributed on the unit sphere.  To fix some notation, for a vector $v = (v_i)_{i=1}^n \in \mathbb{C}^n$, we let $\|v\|_{\infty}$ denote the $\ell^\infty$-norm of $v$ and $\|v\|_2$ denote the $\ell^2$-norm of $v$.  In addition, for $I \subset [n] := \{1, \ldots, n\}$, we let $v_I$ denote the $|I|$-vector $v_I = (v_i)_{i \in I}$.  Here, $|I|$ denotes the cardinality of the finite set $I$.  

\begin{proposition}[Largest coordinate of a uniformly distributed vector on the unit sphere] \label{prop:largest-coord}
Let $v$ be uniformly distributed on the unit sphere in $\mathbb{C}^n$ or $\mathbb{R}^n$.  Then there exists an absolute constant $C > 0$ such that
\begin{equation} \label{eq:ellinfinity}
	\|v\|_{\infty} \leq C \sqrt{ \frac{\log n}{n}} 
\end{equation}
with probability $1-o(1)$\footnote{Here $o(1)$ denotes a term which tends to zero as $n \to \infty$.  See Section \ref{sec:notation} for a complete description of our asymptotic notation.}.  
\end{proposition}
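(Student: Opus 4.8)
The plan is to reduce the problem to a Gaussian computation. For the real case, recall that if $g = (g_1, \ldots, g_n)$ has iid standard normal coordinates, then $v := g / \|g\|_2$ is uniformly distributed on the real unit sphere $S^{n-1}_{\mathbb{R}}$; the analogous statement holds in the complex case with $g$ having iid standard complex normal entries. Thus $\|v\|_\infty = \|g\|_\infty / \|g\|_2$, and it suffices to control the numerator from above and the denominator from below, each on an event of probability $1 - o(1)$.

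First I would handle the numerator. By a standard Gaussian tail bound, $\mathbb{P}(|g_i| \geq t) \leq 2 e^{-t^2/2}$ in the real case (and a comparable bound with a different constant in the complex case, since $|g_i|^2$ is then a sum of two squared real Gaussians, i.e. exponential). Taking $t = C_1 \sqrt{\log n}$ with $C_1$ large enough and applying a union bound over the $n$ coordinates gives $\mathbb{P}\left( \|g\|_\infty \geq C_1 \sqrt{\log n} \right) \leq n \cdot 2 e^{-C_1^2 (\log n)/2} = 2 n^{1 - C_1^2/2} = o(1)$, say for $C_1 = 2$. Next I would handle the denominator: $\|g\|_2^2 = \sum_{i=1}^n |g_i|^2$ is a sum of $n$ iid nonnegative random variables with mean of order $1$ (equal to $n$ in the real case, $2n$ in the complex case after the usual normalization), so by the law of large numbers, or more quantitatively by a Bernstein/Chernoff bound for sub-exponential variables, $\mathbb{P}\left( \|g\|_2^2 \leq n/2 \right) = o(1)$ (again adjusting the constant in the complex normalization). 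On the intersection of these two events, which still has probability $1 - o(1)$, we get
\[
	\|v\|_\infty = \frac{\|g\|_\infty}{\|g\|_2} \leq \frac{C_1 \sqrt{\log n}}{\sqrt{n/2}} = C \sqrt{\frac{\log n}{n}},
\]
with $C = C_1 \sqrt{2}$, which is the claimed bound. The complex case is identical up to the bookkeeping of normalization constants.

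There is no real obstacle here; this is a routine concentration argument, and the only point requiring a little care is making sure the absolute constant $C$ does not secretly depend on whether we are in the real or complex setting — which it does not, since in both cases one only needs a fixed choice such as $C_1 = 2$ in the union bound and a fixed lower bound like $n/2$ (resp. a suitable constant multiple of $n$) for $\|g\|_2^2$. One could alternatively avoid the Gaussian representation entirely and compute directly with the beta-distributed coordinates $|v_i|^2 \sim \mathrm{Beta}(1, n-1)$ (real case) or $\mathrm{Beta}(1, n-1)$ after the appropriate identification (complex case), but the Gaussian route is cleaner and more transparent.
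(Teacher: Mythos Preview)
Your proof is correct and follows essentially the same route as the paper: represent $v$ as a normalized Gaussian $g/\|g\|_2$, bound $\|g\|_\infty$ via a union bound on Gaussian tails, bound $\|g\|_2$ from below by concentration, and combine. The only cosmetic difference is that the paper bounds each $\P(|v_i| \ge C\sqrt{(\log n)/n})$ individually and then applies the union bound, whereas you first union-bound the $|g_i|$ to control $\|g\|_\infty$; the two orderings are equivalent.
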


The bound on the $\ell^\infty$-norm in \eqref{eq:ellinfinity} rules out peaks in the distribution of mass of $v$.  This bound is optimal, up to the choice of constant $C$.  A similar bound was recently extended to eigenvectors of matrices with independent subgaussian entries \cite{RVdeloc}.  

\begin{definition}[Subgaussian random variable]
A real random variable $X$ is called \emph{subgaussian} if there exists $B > 0$ called the subgaussian moment of $X$ such that
\[ \Prob( |X| > t) \leq 2 e^{-t^2 / B^2} \]
for every $t > 0$.  
\end{definition}

\begin{theorem}[Theorem 1.1, \cite{RVdeloc}] \label{thm:RVdeloc}
Let $A$ be an $n \times n$ matrix whose entries $a_{ij}$ are independent real-valued random variables with mean zero, unit variance, and subgaussian moment bounded by $B$.  Let $t \geq 2$.  Then with probability at least $1 - n^{1 - t}$, every eigenvector $v$ of $A$ satisfies
\[ \|v \|_{\infty} \leq \frac{ C t^{3/2} \log^{9/2} n}{\sqrt{n}} \|v \|_2 .\]
Here $C > 0$ depends only on $B$.  
\end{theorem}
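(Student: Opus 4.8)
The plan is to follow the strategy of Rudelson and Vershynin \cite{RVdeloc}. It suffices to bound a single coordinate: for a fixed $j \in [n]$ we show that the probability that some unit eigenvector $v$ of $A$ has $|v_j|$ exceeding $s := C t^{3/2} \log^{9/2} n / \sqrt{n}$ is at most $n^{-t}$, and then union bound over $j \in [n]$. Fix $j$, enlarge it to a block $J \ni j$ of size $k$ (with $k$ a constant), and permute coordinates so that $J$ occupies the last $k$ positions, writing
\[ A = \begin{pmatrix} M & X \\ Y^{*} & D \end{pmatrix}, \]
where $M$ is the $(n-k)\times(n-k)$ principal minor and $X$ is $(n-k)\times k$; the key point is that $M$ and $X$ are independent. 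If $Av = \lambda v$ with $\|v\|_2 = 1$, then writing $v = (u, v_J)$ the first $n-k$ rows give $(M - \lambda I)u = -X v_J$, so $\|v_J\|_2$ (and hence $|v_j|$) will be controlled once we can (i) replace the random eigenvalue $\lambda$ by a deterministic one, (ii) show that $M - \lambda I$ has many small singular values with usable singular vectors, and (iii) show that the independent block $X$ acts as a near-isometry on the relevant directions.

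For (i): the operator norm bound $\|A\| \le K\sqrt{n}$, with $K$ depending only on the subgaussian moment bound, fails with probability at most $e^{-cn}$, so $\lambda$ lies in the disc of radius $K\sqrt n$; replacing $\lambda$ by the nearest point $\lambda_0$ of a polynomially fine net $\Lambda_0$ of this disc (of cardinality $n^{O(1)}$) introduces only a negligible error, $\|(M - \lambda_0 I)u + X v_J\|_2 \le n^{-C_1}$. For (ii): fix $\lambda_0$ and let $W$ be the $(n-k)\times m$ matrix of left singular vectors of $M - \lambda_0 I$ associated with its $m$ smallest singular values, where $m \asymp k + t\,\mathrm{polylog}(n)$ is chosen below; then $\|W^{*}(M - \lambda_0 I)\|_{\mathrm{op}} = s_{n-k-m+1}(M - \lambda_0 I)$, and an intermediate singular value estimate for shifted i.i.d.\ matrices bounds this quantity by $m/\sqrt{n}$ up to polylogarithmic and $t$-dependent factors, off an event of probability at most $e^{-cm}$. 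Applying $W^{*}$ to the perturbed eigenvector relation yields $\|W^{*} X v_J\|_2 \le s_{n-k-m+1}(M - \lambda_0 I)\,\|u\|_2 + n^{-C_1}$.

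Step (iii) is the heart of the argument. Condition on $M$, hence on $W$: since $X$ is independent of $M$, has i.i.d.\ mean-zero unit-variance subgaussian entries, and $W$ has orthonormal columns, $W^{*} X$ is an $m \times k$ random matrix whose columns are independent isotropic subgaussian vectors. The least singular value bound for tall rectangular random matrices with real --- or, as emphasized in the abstract, genuinely complex --- entries then gives $s_k(W^{*} X) \ge c\sqrt{m}$ with probability at least $1 - e^{-cm}$, provided $m \ge (1+\eta)k$; taking $m \asymp k + t\,\mathrm{polylog}(n)$ makes this and the estimate in (ii) hold with the required probability after the union bound over $\Lambda_0$. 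Combining the three steps,
\[ \|v_J\|_2 \le \frac{\|W^{*} X v_J\|_2}{s_k(W^{*} X)} \lesssim \frac{m/\sqrt n}{\sqrt m} = \sqrt{\frac{m}{n}}, \]
so $|v_j| \le \|v_J\|_2$ is of order $t^{O(1)}\,\mathrm{polylog}(n)/\sqrt n$; carefully tracking the exponents through the net scale, the intermediate singular value estimate, and the rectangular least singular value bound yields the stated $t^{3/2}\log^{9/2} n$.

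The main obstacle is step (iii): $W^{*} X$ is a \emph{projection} of an i.i.d.\ matrix, not an i.i.d.\ matrix itself, and one must establish an exponentially small failure probability for its least singular value, uniformly over the data-dependent subspace spanned by $W$ and --- crucially for the complex case --- without access to the usual real-variable small-ball machinery. This is precisely the content of the generalized rectangular least singular value bound that this paper develops. A secondary difficulty is the bookkeeping required to assemble the various $e^{-cm}$, $e^{-cn}$ and net contributions into the clean tail $n^{1-t}$ with exactly the advertised powers of $t$ and $\log n$.
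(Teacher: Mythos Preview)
This theorem is not proved in the present paper at all; it is quoted verbatim as Theorem~1.1 of \cite{RVdeloc} and serves only as background and motivation for the no-gaps results that follow. There is therefore no ``paper's own proof'' to compare your proposal against.

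Your outline is a broadly faithful sketch of the original Rudelson--Vershynin argument from \cite{RVdeloc}, but you have misidentified where the contributions of the present paper lie. You write that step (iii) ``is precisely the content of the generalized rectangular least singular value bound that this paper develops.'' It is not. The rectangular least singular value bounds established here (Theorems~\ref{thm:rectangular} and~\ref{thm:nearlysquare}) concern $N\times n$ matrices of the form $A-\lambda$ with $A$ genuinely complex and i.i.d.; they feed into Propositions~\ref{prop:delocgeneral}--\ref{prop:delocreal} to control the submatrices $(A-\lambda_0)_{I^c}$ in the \emph{no-gaps} delocalization argument (Theorems~\ref{thm:eigenvector}--\ref{thm:nogaps-small}), not the projected block $W^{*}X$ appearing in the $\ell^\infty$ argument. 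The $\ell^\infty$ result you are sketching and the no-gaps results actually proved in this paper are distinct theorems with different reductions and different key lemmas.

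A technical caution on your step (iii) as well: after conditioning on $M$, the matrix $W^{*}X$ has independent isotropic subgaussian \emph{columns}, but its entries are data-dependent linear combinations of the original i.i.d.\ variables, so you cannot directly invoke a rectangular bound for i.i.d.\ matrices. In \cite{RVdeloc} this step is handled by a small-ball/tensorization argument tailored to such projections, not by a generic $s_k$ estimate of the type developed in Section~\ref{sec:proof:rectangular} here.
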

\begin{remark}
More generally, Theorem \ref{thm:RVdeloc} holds in the case when the entries $a_{ij}$ of $A$ are complex-valued; see \cite[Remark 1.2]{RVdeloc} for details.  
\end{remark}

In this note, we are interested in the smallest coordinates of the eigenvectors.  For comparison, a random vector $v$ uniformly distributed on the unit sphere has the following bounds for its smallest coordinates.  

\begin{proposition}[Smallest coordinates of a vector uniformly distributed on the unit sphere] \label{prop:uniform-vector}
Let $v$ be uniformly distributed on the unit sphere in either $\mathbb{R}^n$ or $\mathbb{C}^n$.  
\begin{itemize}
\item (Real case) There exists constants $C, c > 0$ such that if $v$ is uniformly distributed on the unit sphere in $\mathbb{R}^n$, then for any integer $1 \leq m \leq cn$ 
\begin{equation} \label{eq:nogaps}
	\|v_I \|_2 \geq \frac{C}{\log^c n} \left( \frac{m}{n} \right)^{3/2} \quad \text{ for all } I \subset [n], |I| \geq m
\end{equation}
with probability $1-o(1)$.  
\item (Complex case) There exists constants $C, c > 0$ such that if $v$ is uniformly distributed on the unit sphere in $\mathbb{C}^n$, then for any integer $1 \leq m \leq cn$ 
\begin{equation} \label{eq:nogaps2}
	\|v_I \|_2 \geq \frac{C}{\log^c n} \frac{m}{n } \quad \text{ for all } I \subset [n], |I| \geq m
\end{equation}
with probability $1-o(1)$.  
\end{itemize}
\end{proposition}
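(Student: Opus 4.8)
The plan is to use the exact description of $v$ as a normalized Gaussian vector and to reduce the statement to a tail bound for a binomial count. Write $v = g/\|g\|_2$, where $g=(g_1,\dots,g_n)$ has iid standard real (resp.\ standard complex) Gaussian coordinates in the real (resp.\ complex) case. For fixed $m$ the minimum of $\|v_I\|_2$ over all $I$ with $|I|\ge m$ is attained at $I_*(m)$, the set of indices of the $m$ coordinates of smallest modulus, since enlarging $I$ only increases the norm; hence the proposition amounts to showing that, with probability $1-o(1)$, $\|v_{I_*(m)}\|_2$ exceeds the claimed bound for every $1\le m\le cn$ simultaneously. A standard $\chi^2$ tail bound gives $\|g\|_2^2\le C_0 n$ with probability $1-e^{-c_1n}$, so after removing this event I may replace $\|v_{I_*(m)}\|_2^2$ by $\|g_{I_*(m)}\|_2^2/(C_0 n)$ and work only with $g$.

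Next I would carry out a deterministic ``deficit-to-count'' reduction. In the real case, put $\delta=\delta_n:=C\sqrt{C_0}/\log^c n$. If $\|g_{I_*(m)}\|_2^2<\delta^2 m^3/n^2$, then at most $m/4$ of the $m$ smallest $|g_i|$ can be $\ge 2\delta m/n$ (each such term contributes $4\delta^2 m^2/n^2$ to the sum), so at least $3m/4$ of the $g_i$ satisfy $|g_i|\le 2\delta m/n$; that is, $N(2\delta m/n)\ge 3m/4$ where $N(t):=\#\{i:|g_i|\le t\}$. Now $N(t)\sim \mathrm{Bin}(n,p(t))$ with $p(t)=\Prob(|g_1|\le t)\le\sqrt{2/\pi}\,t$, so $\E N(2\delta m/n)=O(\delta m)$, and the elementary bound $\Prob(\mathrm{Bin}(n,p)\ge k)\le(enp/k)^k$ gives $\Prob\bigl(N(2\delta m/n)\ge 3m/4\bigr)\le (A/\log^c n)^{3m/4}$ for a constant $A=A(C,C_0)$. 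Summing over $1\le m\le cn$ yields a geometric series with vanishing base, of total size $O\bigl((\log n)^{-3c/4}\bigr)=o(1)$; together with the $e^{-c_1 n}$ loss this proves \eqref{eq:nogaps}.

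For the complex case the argument is identical, now with $g$ standard complex Gaussian, so that $|g_1|^2,\dots,|g_n|^2$ are iid $\mathrm{Exp}(1)$ variables. The decisive difference is that the relevant small-ball estimate improves from $\Prob(|g_1|^2\le s)\le c\sqrt{s}$ to $\Prob(|g_1|^2\le s)=1-e^{-s}\le s$; pushing this linear (rather than square-root) bound through the same reduction — a deficit $\|g_{I_*(m)}\|_2^2<\delta^2 m^2/n$ forces $\#\{i:|g_i|^2\le 4\delta^2 m/n\}\ge 3m/4$ — and the same binomial tail bound and summation produces the improved exponent, i.e.\ \eqref{eq:nogaps2}.

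The step I expect to be the main obstacle is organizing the union over all admissible $m$: for a single fixed $m$ the bound is immediate (and then any $c>0$ works), but the per-$m$ failure probability is only of the form $(\mathrm{const}/\log^c n)^{\Theta(m)}$, which a careless union bound sums to a constant rather than to $o(1)$. The point to get right is that the base $\mathrm{const}/\log^c n$ tends to $0$, so the geometric series over $m$ has sum of order $(\log n)^{-\Theta(c)}=o(1)$; one should also verify that the deficit-to-count reduction and the estimate $p(t)=O(t)$ remain valid uniformly over $1\le m\le cn$, which holds since there $t=2\delta m/n$ stays bounded and in fact tends to $0$.
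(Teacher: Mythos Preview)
Your argument is correct and in fact cleaner than the paper's. Both proofs start from $v=g/\|g\|_2$ with $g$ Gaussian and condition on $\|g\|_2^2\le C_0 n$, but then diverge. The paper fixes a single $m$ (restricting to $C\log n\le m\le n/\log n$ and absorbing the boundary cases into the logarithmic slack), performs a dyadic decomposition of the magnitudes $|g_i|^2$ into buckets $[\delta 2^{k-1}/n,\delta 2^k/n]$ (real case: $[\delta^2 2^{k-1}/n^2,\delta^2 2^k/n^2]$), and controls each bucket count $\eta_k$ via Chebyshev's inequality; summing the dyadic contributions yields the lower bound on $\sum_{i\le m}Y_i^2$ with failure probability $O(1/\log n)$. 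You instead use a single threshold: the deficit $\|g_{I_*(m)}\|_2^2<\delta^2 m^3/n^2$ forces $N(2\delta m/n)\ge 3m/4$, and then the Chernoff-type bound $\Prob(\mathrm{Bin}(n,p)\ge k)\le (enp/k)^k$ gives an exponentially small (in $m$) failure probability, summable over all $m$ as a geometric series. Your approach avoids the dyadic machinery entirely and delivers the result uniformly in $1\le m\le cn$ at once, whereas the paper proves it for each $m$ separately; the price is that Chebyshev would not have been summable, so the sharper binomial tail is essential to your route.
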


The bounds in \eqref{eq:nogaps} and \eqref{eq:nogaps2} show that no set of $m$ coordinates of $v$ can have too little mass.  This rules out ``gaps'' in how the mass of $v$ is spread amongst the coordinates (or as described in \cite{Rlecturenotes}, this shows that $v$ lacks ``almost empty zones'').  This phenomenon was named \emph{no-gaps delocalization} by Rudelson and Vershynin \cite{RVnogaps}.  The bounds in \eqref{eq:nogaps} and \eqref{eq:nogaps2} are conjectured to be optimal, modulo logarithmic corrections, for all values of $m$, and have been proven sharp for a number of regimes \cite{MR1994955, OVWsurvey}.  

Importantly, we emphasis the very different behavior displayed in Proposition \ref{prop:uniform-vector} between a vector $v$ uniformly distributed on the unit sphere in $\mathbb{R}^n$ compared to the unit sphere in $\mathbb{C}^n$.  This can be explained in a number of ways.  In either case, the vector $v$ has the same distribution as $g / \|g\|_2$, where $g$ is the standard real or complex Gaussian vector in $\mathbb{R}^n$ or $\mathbb{C}^n$.  It follows that, $\|g\|_2 = \Theta( \sqrt{n} )$ with probability at least $1 - C e^{-c n}$; see for example, \cite[Lemma 1]{LM}.  Here $C, c > 0$ are constants which may change from one occurrence to the next.  In addition, for all $\eps > 0$, the coordinates of $g$ satisfy 
\[ \Prob \left( |g_i| \leq \eps \right) \leq C \eps \]
in the real case and 
\[ \Prob \left( |g_i| \leq \eps \right) \leq C \eps^2 \]
in the complex case.  These bounds imply the following bounds for the coordinates of a vector $v$ uniformly distributed on the unit sphere:
\begin{equation} \label{eq:vibnd}
	\Prob \left( |v_i| \leq \frac{\eps}{\sqrt{n}} \right) \leq C \eps + C e^{-c n}
\end{equation}
in the real case and
\begin{equation} \label{eq:vibnd2}
	\Prob \left( |v_i| \leq \frac{\eps}{\sqrt{n}} \right) \leq C \eps^2 + C e^{-c n}
\end{equation} 
for the complex case.  Importantly, the difference between $\eps$ appearing on the right-hand side of \eqref{eq:vibnd} and $\eps^2$ on the right-hand side of \eqref{eq:vibnd2} leads to the differing behaviors seen in Proposition \ref{prop:uniform-vector}.  Indeed, by the union bound, \eqref{eq:vibnd} and \eqref{eq:vibnd2} can easily be used to deduce bounds for the smallest coordinate of $v$:
\[ \Prob \left( \min_{1 \leq i \leq n}  |v_i| \leq \frac{\eps}{n^{3/2}} \right) \leq C \eps + C n e^{-c n} \]
in the real case and
\[ \Prob \left( \min_{1 \leq i \leq n} |v_i| \leq \frac{\eps}{n} \right) \leq C \eps^2 + C ne^{-c n} \]
for the complex case.  These last two bounds agree with Proposition \ref{prop:uniform-vector} for the case $m=1$.

For eigenvectors of independent-entry matrices, Rudelson and Vershynin \cite{RVnogaps} proved the following analogue of \eqref{eq:nogaps}.  

\begin{theorem}[Theorem 1.5, \cite{RVnogaps}] \label{thm:RVnogaps}
Let $A$ be an $n \times n$ random matrix whose entries are iid copies of the real-valued random variable $\xi$, which satisfies 
\[ \sup_{u \in \mathbb{R}} \P(|\xi - u| \leq 1) \leq 1 - p, \quad \P(|\xi| > K) \leq p/2 \]
for some $K, p > 0$.  Choose $M \geq 1$ such that the event $\{ \|A \| \leq M \sqrt{n} \}$ holds with probability at least $1/2$.  Let $\eps \geq 1/n$ and $s \geq c_1 \eps^{-7/6} n^{-1/6} + e^{-c_2 /\sqrt{\eps}}$.  Then, conditionally on the event $\{\|A \| \leq M \sqrt{n} \}$, the following holds with probability at least $1 - (c_2 s)^{\eps n}$.  Every eigenvector $v$ of $A$ satisfies 
\[ \| v_I \|_2 \geq (\eps s)^6 \|v\|_2 \quad \text{ for all } I \subset [n], |I| \geq \eps n. \]
Here $c_1, c_2, c_3$ depend on $p$, $k$, and $M$.  
\end{theorem}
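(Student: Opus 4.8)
The plan is a union bound over the localizing coordinate set that reduces the claim to an anti-concentration estimate against a short, explicitly independent family of reference vectors. Throughout, condition on the event $\{\|A\|\le M\sqrt n\}$ (this costs at most a factor $2$ in probability, and on it every eigenvalue obeys $|\lambda|\le M\sqrt n$ and every submatrix of $A$ has operator norm $\le M\sqrt n$). It is enough to estimate the probability that some unit eigenvector $v$, with $Av=\lambda v$, satisfies $\|v_I\|_2\le\delta:=(\eps s)^6$ for some $I\subset[n]$ with $|I|=m:=\lceil\eps n\rceil$; a violation on a larger set restricts to a size-$m$ subset. Since $\binom{n}{m}\le(e/\eps)^m$, after the union bound over $I$ it suffices to fix one such $I$ and bound the corresponding probability by $(c\eps s)^m$.

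Fix $I$, put $J:=[n]\setminus I$, and write $R_i:=(a_{ij})_{j=1}^n$ for the $i$-th row. Restricting the equation $Av=\lambda v$ to the rows in $J$ gives $\|(A_{JJ}-\lambda I)v_J\|_2=\|A_{JI}v_I\|_2\le M\sqrt n\,\delta$, while $\|v_J\|_2\ge\sqrt{1-\delta^2}\ge\tfrac12$; thus $v_J/\|v_J\|_2$ is a unit approximate eigenvector, with approximate eigenvalue $\lambda$ and error $\lesssim\sqrt n\,\delta$, of the \emph{principal submatrix} $A_{JJ}$, which is independent of $\{R_i:i\in I\}$ and yet differs from $A$ only in a $\lesssim\eps$ fraction of rows and columns. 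Restricting instead to the rows in $I$ gives $\langle R_i,v\rangle=(Av)_i=\lambda v_i$, so $\sum_{i\in I}|\langle R_i,v\rangle|^2=|\lambda|^2\|v_I\|_2^2\le M^2n\,\delta^2$; peeling off the $I$-column contribution $\langle (R_i)_I,v_I\rangle$, which has $\ell^2(I)$-norm $\le\|A_{II}\|\,\|v_I\|_2\le M\sqrt n\,\delta$, yields $\sum_{i\in I}|\langle (R_i)_J,v_J\rangle|^2\lesssim n\,\delta^2$, hence $|\langle (R_i)_J,v_J\rangle|\lesssim\sqrt{n/m}\,\delta$ for at least $m/2$ indices $i\in I$.

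Now replace $v_J$ by a reference vector. Cover the possible directions of $v_J$ by a list $\mathcal U$ of unit vectors indexed by $J$, built from the singular vectors of $A_{JJ}-\lambda$ associated with small singular values, ranging $\lambda$ over a polynomial-size net of the disk of radius $M\sqrt n$ and taking a $\theta$-net of the resulting low-dimensional spans with $\theta$ polynomially small; then $\mathcal U$ is independent of $\{R_i:i\in I\}$, has $|\mathcal U|\le n^{O(1)}$, and, on the high-probability event of Theorem~\ref{thm:RVdeloc} applied to $A_{JJ}$, every $u\in\mathcal U$ is delocalized, $\|u\|_\infty\lesssim\log^{O(1)}n/\sqrt n$. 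Substituting $\|v_J\|_2\,u$ for $v_J$ perturbs each inner product by $\lesssim\sqrt n\,\theta$, so for $\ge m/2$ indices $i\in I$ one gets $|\langle (R_i)_J,u\rangle|\le\tau:=C(\sqrt{n/m}\,\delta+\sqrt n\,\theta)$. For a \emph{fixed} delocalized unit $u$, each $\langle (R_i)_J,u\rangle$ is a sum of independent terms, so a small-ball estimate (tensorizing the anti-concentration of the entries) gives $\P(|\langle (R_i)_J,u\rangle|\le t)\lesssim t+\|u\|_\infty\lesssim t+\log^{O(1)}n/\sqrt n$; since the rows $(R_i)_J$ with $i\in I$ are independent, tensorizing over the $m/2$ small indices and union-bounding over the $\le 2^m$ index subsets and the $\le n^{O(1)}$ elements of $\mathcal U$ bounds the probability for this fixed $I$ by
\[ n^{O(1)}\,2^m\bigl(C\tau+C\log^{O(1)}n/\sqrt n\bigr)^{m/2}. \]

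It remains to verify, with $\delta=(\eps s)^6$, $m=\lceil\eps n\rceil$ and the hypothesis $s\ge c_1\eps^{-7/6}n^{-1/6}+e^{-c_2/\sqrt\eps}$, that $\sqrt{n/m}\,\delta$, $\sqrt n\,\theta$ and $\log^{O(1)}n/\sqrt n$ are all $\lesssim(\eps s)^2$; then the displayed quantity is $\le(c\eps s)^m$ (the prefactor $n^{O(1)}2^m$ being harmless because $\eps s$ is bounded below by a negative power of $n$), and the union bound over $I$ produces the claimed probability $1-(c_2s)^{\eps n}$. The main obstacle is precisely the decoupling carried out in the middle two paragraphs: $v$ is a function of \emph{all} of $A$, so a priori no row or block is independent of it. The way through is that $I$ is small, so that $A_{JJ}$ is at once independent of the rows $\{R_i:i\in I\}$ and close enough to $A$ that $v_J$ is essentially an eigenvector of $A_{JJ}$; this must be combined with control of the metric entropy of the approximate eigenspace of $A_{JJ}$ at $\lambda$ and with the $\ell^\infty$-delocalization of Theorem~\ref{thm:RVdeloc} to guarantee that the reference vectors have no peaks, which is what makes the per-row anti-concentration as strong as $t+n^{-1/2+o(1)}$ rather than trivial. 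The quantitative heart of the matter is the resulting counting tension: the $\exp(-\Omega(\eps n))$ saving from tensorizing over the $\eps n$ rows of $I$ must overcome the $\binom{n}{\eps n}\le(e/\eps)^{\eps n}$ cost of the union bound over $I$, and balancing these is exactly what forces the stated lower bound on $s$.
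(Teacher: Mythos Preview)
This theorem is not proved in the paper; it is quoted verbatim from \cite{RVnogaps} as prior work that the present results improve upon (see the discussion surrounding Theorem~\ref{thm:RVnogaps}). There is therefore no proof in this paper to compare your attempt against.

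On the substance of your sketch, there is a genuine gap at the construction of the reference list $\mathcal{U}$. You need the span of the singular vectors of $A_{JJ}-\lambda$ with singular value below $C\sqrt{n}\,\delta$ to be low-dimensional, so that a $\theta$-net of this span, taken over a polynomial net of $\lambda$'s, has total cardinality $n^{O(1)}$. But bounding the number of small singular values of $A_{JJ}-\lambda$ is itself a hard problem---it is essentially the intermediate singular value problem for shifted iid matrices---and nothing you have written rules out this dimension being a constant fraction of $n$. If it is, $|\mathcal{U}|$ is exponential in $n$ and the union bound over $\mathcal{U}$ destroys the anti-concentration saving from tensorizing over the $m/2$ rows. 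Both \cite{RVnogaps} and the present paper (via Proposition~\ref{prop:deloc} and its relatives) sidestep this entirely: they reduce directly to the \emph{least} singular value of the rectangular matrix $(A-\lambda_0)_{I^c}$, and the technical work goes into bounding that via distance-to-subspace and LCD arguments, with no net over an approximate eigenspace.

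A secondary issue: you invoke Theorem~\ref{thm:RVdeloc} to claim $\|u\|_\infty\lesssim n^{-1/2+o(1)}$ for every $u\in\mathcal{U}$, but that theorem concerns genuine eigenvectors of an iid matrix, not singular vectors of $A_{JJ}-\lambda$ for arbitrary $\lambda$ in a net. Delocalization of the latter is not supplied by anything cited here and would require separate justification.
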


\begin{remark}
We have stated Theorem \ref{thm:RVnogaps} for real iid random matrices, but the results in \cite{RVnogaps} also extend to the case where the $(i,j)$-entry depends on the $(j,i)$-entry as well as the case when the entries are complex-valued.  We refer the reader to \cite[Section 1]{RVnogaps} for details.  
\end{remark}

In view of numerical simulations and heuristic arguments coming from \eqref{eq:vibnd} and \eqref{eq:vibnd2}, the bounds in Theorem \ref{thm:RVnogaps} appear to be suboptimal.  In this article, we improve the bounds in Theorem \ref{thm:RVnogaps} for random matrices with genuinely complex entries.  

\begin{definition}
Following \cite{luh2018complex}, we say an $N \times n$ random matrix $A$ is \emph{genuinely complex} if the entries of $A$ are independent and\footnote{We use $\sqrt{-1}$ to denote the imaginary unit and reserve $i$ as an index.  See Section \ref{sec:notation} for a complete description of our notation.}  $a_{ij} = \xi_{ij} + \sqrt{-1} \xi'_{ij}$ where $\xi_{ij}$ and $\xi'_{ij}$ are independent real random variables with mean zero, unit variance, and subgaussian moment bounded by $B$. 
\end{definition}


Eigenvectors of random matrices have been widely studied in the mathematics and physics literature.  We refer the reader to \cite{MR3494576,PhysRevLett.81.3367,MR2839984,MR3660521,MR3851824,MR3433288,MR2558268,MR3039394,MR3256861,MR2330979,MR2782201,MR3039372,MR3164751,MR3227063,MR3129806,MR3606475,MR3690289,MR3025715,MR2537522,MR3034787,MR3449389,MR2525652,MR1003705,MR1062064,MR2930379,MR3470349,MR3755583,MR2669449,MR2784665,MR3183577,MR2782623,MR2846669,MR3085669,MR2981427,BNST, BGZ, BD, CR, MC, OVWsurvey, RVdeloc, RVnogaps, TVcovariance, VuWangProjections} and references therein for many results concerning various models of random matrices.  The majority of these results apply to eigenvectors of Hermitian or real symmetric random matrices.  Significantly less appears to be known for independent-entry matrices.  
In the case of the complex Ginibre ensemble, where the entries are iid standard complex Gaussians, a number of results have described the asymptotic correlations and overlaps between eigenvectors.  Important contributions in this line of research were made by Chalker and Mehlig \cite{PhysRevLett.81.3367,MC} with significant improvements and generalizations being made recently by Fyodorov \cite{MR3851824} as well as by Bourgade and Dubach \cite{BD}.  Other recent results include \cite{BNST, BGZ, CR} and references therein, and there still appears to be significant work to be done in this area.  


Shortly after this paper appeared on the arXiv, an improved version of Theorem \ref{thm:RVnogaps} was proved by Lytova and Tikhomirov \cite{LT}.  For larger values of $m$, the results in \cite{LT} achieve the optimal bounds depicted in Proposition \ref{prop:uniform-vector}.  In particular, the results from \cite{LT} improve  upon our bounds when $m \geq \log^C n$ for some constant $C > 0$.  The techniques used by Lytova and Tikhomirov are significantly different than those employed in this paper.  In \cite{LT}, a geometric approach is taken, which utilizes test projections and involves studying random ellipsoids generated by projections of independent vectors.  Compared to  \cite{LT}, the main results in the present paper hold with higher probability and  include bounds for the cases when $1 \leq m \leq \log^C n$.

\section{Main results} \label{sec:main}

Our main results hold for random matrices with genuinely complex entries as well as random matrices with real entries.  In this section, we also discuss an application of our methods to normal vectors of random hyperplanes.  We continue to use the notation introduced above: for a vector $v = (v_i)_{i=1}^n \in \mathbb{C}^n$, we let $\|v\|_2$ denote the $\ell^2$-norm of $v$.  In addition, for $I \subset [n] := \{1, \ldots, n\}$, we let $v_I$ denote the $|I|$-vector $v_I = (v_i)_{i \in I}$, where, $|I|$ denotes the cardinality of the finite set $I$.  Recall that $\sqrt{-1}$ denotes the imaginary unit.

\subsection{Results for eigenvectors of genuinely complex matrices} \label{sec:main:eigenvectors}

Our first main result improves upon Theorem \ref{thm:RVnogaps} for large enough values of $m$.  

\begin{theorem} \label{thm:eigenvector}
Assume $A$ is an $n \times n$ genuinely complex random matrix.  Then there exist constants $C, c, c' > 0$ such that for every $t \geq e^{-\log^2 n}$ and $\log^2 n \leq m \leq c'n$, with probability at least $1 - (Ct)^m - Ce^{-c'n}$, every eigenvector $v$ of $A$ satisfies
\[ \|v_I \|_2 \geq c \sqrt{t} \left( \frac{m}{n} \right)^{3/2} \|v \|_2 \text{ for all } I \subset[n] \text{ with } |I| \geq m. \]
Here $C, c, c'$ depend only on the subgaussian moment bound $B$.  
\end{theorem}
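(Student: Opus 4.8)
The standard route to no-gaps delocalization is to reduce the statement about eigenvectors to a least singular value bound for rectangular submatrices of $A - z$, uniformly over $z$. Suppose $v$ is a unit eigenvector of $A$ with eigenvalue $z$, and suppose that $\|v_I\|_2$ is very small for some $I$ with $|I| \geq m$. Write $J = [n] \setminus I$, so $v$ is almost supported on $J$ (a set of size at most $n - m$). From $(A - z)v = 0$, looking at the rows indexed by $I$, we get $(A-z)_{I \times [n]} v = 0$, hence $(A-z)_{I \times J} v_J \approx -(A-z)_{I \times I} v_I$ is small (of order $\|A\| \cdot \|v_I\|_2$). Since $\|v_J\|_2 \approx 1$, this forces the $|I| \times |J|$ matrix $(A - z)_{I \times J}$ to have a small singular value on a fixed direction. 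So it suffices to show that, with the stated probability, \emph{every} $|I| \times |J|$ submatrix of $A - z$, for every relevant $z$, has least singular value bounded below by something like $\sqrt{t}\,(m/n)^{3/2}\sqrt{n} \cdot (\text{loss})$ — and here is where being genuinely complex buys the extra power of $m/n$ relative to Theorem~\ref{thm:RVnogaps}.

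First I would fix $z$ and a pair $(I, J)$ with $|I| = m$ (it suffices to treat $|I|$ exactly $m$, as shrinking $I$ only shrinks $\|v_I\|_2$; and one can also assume $|J| = n - m$ or pass to a square-ish subblock), and establish a small-ball / least singular value estimate for the genuinely complex rectangular random matrix $M := (A - z)_{I \times J}$, of size $m \times (n-m)$. This is exactly the kind of statement the abstract advertises: a least singular value bound for genuinely complex rectangular matrices generalizing Luh's bound \cite{luh2018complex}. The key point is that for a genuinely complex matrix, the anti-concentration on each coordinate is quadratic (cf. \eqref{eq:vibnd2}), so a net/Berry–Esseen or Halász-type argument over the sphere in $\mathbb{C}^{n-m}$ gives a bound of the form $\Prob(s_{\min}(M) \leq \tau \sqrt{n-m}) \leq (C\tau)^{2\cdot(\text{something})}$ or, more precisely, a tail that when optimized against the geometry of $I$ yields the $(m/n)^{3/2}$ scaling with $\sqrt{t}$. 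I would structure this via the usual compressible/incompressible dichotomy on the unit sphere: for compressible vectors a crude net argument suffices; for incompressible vectors one uses the genuinely-complex small-ball probability on a random row together with the tensorization lemma.

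Next I would remove the conditioning on a fixed $z$ by a net argument over the spectrum: $\|A\| \leq C\sqrt{n}$ with probability $1 - e^{-cn}$ (subgaussian entries), so all eigenvalues lie in a disk of radius $O(\sqrt{n})$; take a net of that disk of spacing $\sim 1$ (so $O(n)$ points), and upgrade from the net to all $z$ using the Lipschitz dependence of $s_{\min}((A-z)_{I \times J})$ in $z$ and the operator norm bound. Simultaneously I would union bound over all $\binom{n}{m} \leq (en/m)^m$ choices of $I$ (and the corresponding $J$). The probability budget $(Ct)^m$ in the theorem is exactly what is needed to absorb the $\binom{n}{m}$ and the net of size $\mathrm{poly}(n)$: one needs the per-event failure probability to be at most $(Ct)^{m} \cdot (m/n)^{cm} \cdot n^{-C}$ or so, which is why the singular value tail must genuinely be of order (something)$^{m}$ and not just (something)$^{\Omega(1)}$ — the constraint $m \geq \log^2 n$ is what makes $(en/m)^m$ and the polynomial net factors swallowable. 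Finally, combining: on the good event, for every eigenpair $(v,z)$ and every $I$ with $|I| \geq m$, $\|v_I\|_2 \gtrsim s_{\min}((A-z)_{I \times J}) / \|A\| \gtrsim \sqrt{t}(m/n)^{3/2}$, which is the claim (after checking the algebra relating the submatrix singular value bound to the coordinate mass, and rescaling constants into $C, c, c'$).

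\textbf{Main obstacle.} The crux is the rectangular genuinely-complex least singular value bound with the \emph{correct} exponent: I need $\Prob(s_{\min}((A-z)_{I\times J}) \le \sqrt{t}\,(m/n)^{3/2}\sqrt n) \lesssim (Ct)^{m}\,(m/n)^{?\,m} + e^{-c n}$ with the $(m/n)$-power strong enough to defeat the entropy $\binom{n}{m}$ while still leaving the clean $\sqrt{t}\,(m/n)^{3/2}$ in the conclusion. Getting the genuinely-complex gain (the quadratic small-ball estimate) to propagate cleanly through the incompressible case — in particular controlling the distance from a random column of $(A-z)_{I\times J}$ to the span of the others, or equivalently running a Halász-type inverse Littlewood–Offord argument over $S^{n-m-1}_{\mathbb{C}}$ with an LCD-type parameter — and doing so uniformly in $z$ via a net, is where essentially all the work lies; the reduction from eigenvectors to submatrices and the union bounds are routine by comparison.
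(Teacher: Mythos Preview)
Your reduction has a genuine gap: the submatrix you extract is the wrong one. You restrict to the \emph{rows} indexed by $I$ and obtain the $m \times (n-m)$ block $M = (A-z)_{I \times J}$; but this is a \emph{wide} matrix (since $m \le c'n < n-m$), so its kernel has dimension at least $n-2m > 0$ and $\min_{\|x\|_2 = 1}\|Mx\|_2 = 0$ deterministically. The fact that $Mv_J$ is small for the particular unit vector $v_J$ coming from the eigenvector is therefore no constraint on $M$ at all, and no least singular value bound for $M$ can ever rule it out. Looking only at the $m$ equations in rows $I$ throws away far too much of the eigenvector equation.

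The fix, and what the paper does (Propositions~\ref{prop:delocgeneral} and~\ref{prop:deloc}), is to keep all $n$ rows and restrict only the \emph{columns}: from $(A-\lambda_0)v = (A-\lambda_0)_I v_I + (A-\lambda_0)_{I^c} v_{I^c}$ (column decomposition) one obtains $\|(A-\lambda_0)_{I^c} v_{I^c}\|_2$ small, and now $(A-\lambda_0)_{I^c}$ is an $n \times (n-m)$ \emph{tall} matrix, for which $s_{\min}$ is nontrivial. Theorem~\ref{thm:rectangular} then gives the genuinely complex bound $\Prob(s_{\min}((A-\lambda_0)_{I^c}) \le \eps(\sqrt{n}-\sqrt{n-m-1})) \le (C\eps)^{2m+1} + e^{-cn}$, and the exponent $2m+1$ (as opposed to $m+1$ in the real case) is precisely what, after union bounding over $\binom{n}{m} \le (en/m)^m$ choices of $I$ and a net of size $O(\delta^{-2})$ over the eigenvalue, produces the $\sqrt{t}\,(m/n)^{3/2}$ scaling. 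Apart from this submatrix issue, the rest of your plan --- the genuinely complex rectangular least singular value bound via LCD/incompressibility, the net over the spectrum, the union bound over $I$ --- is exactly the paper's strategy.
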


For smaller values of $m$, we have the following bound.  

\begin{theorem} \label{thm:eigenvector-small}
Assume $A$ is an $n \times n$ genuinely complex random matrix.  Then there exist constants $C, c, c' > 0$ such that for every $t \geq e^{-c' n}$ and $1 \leq m \leq \log^2 n$, with probability at least $1 - (Ct)^m - C e^{-c'n}$, every eigenvector $v$ of $A$ satisfies
\[ \|v_I \|_2 \geq c \frac{\sqrt{t}}{\log^{2} n} \left( \frac{m}{n} \right)^{3/2 + 1/m} \|v \|_2 \text{ for all } I \subset[n] \text{ with } |I| \geq m. \]
Here $C, c, c'$ depend only on the subgaussian moment bound $B$. 
\end{theorem}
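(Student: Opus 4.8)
The plan is to reduce the localization of eigenvectors to a lower bound on the least singular value of a tall rectangular minor, and then to feed in the least singular value bound for genuinely complex rectangular matrices. I would first restrict to the event $\|A\| \leq C_0\sqrt n$, which fails with probability at most $Ce^{-c'n}$ since the entries are subgaussian; on this event every eigenvalue $\lambda$ of $A$ also satisfies $|\lambda| \leq C_0\sqrt n$. Fix $I \subset [n]$ with $|I| = m$ and put $J = [n]\setminus I$. If $v$ is a unit eigenvector with $Av = \lambda v$, then reading off the column blocks of $(A - \lambda\Id)v = 0$ gives $(A - \lambda\Id)_J\, v_J = -(A - \lambda\Id)_I\, v_I$ (subscripts denoting the submatrix of columns in the indicated set), so that $\|(A - \lambda\Id)_J\, v_J\|_2 \leq (\|A\| + |\lambda|)\|v_I\|_2 \leq 2C_0\sqrt n\,\|v_I\|_2$. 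Since $\|v_I\|_2^2 + \|v_J\|_2^2 = 1$, once $\|v_I\|_2$ is below a small absolute constant we obtain $\|v_I\|_2 \gtrsim s_{\min}\!\big((A - \lambda\Id)_J\big)/\sqrt n$, where $(A - \lambda\Id)_J$ is an $n \times (n-m)$ matrix. Thus it suffices to show that, simultaneously over all eigenvalues $\lambda$ and all sets $J$ of size $n-m$, the least singular value of $(A - \lambda\Id)_J$ is at least $\tau := c_1\sqrt{t}\,\sqrt{n}\,(m/n)^{3/2 + 1/m}/\log^2 n$ with the claimed probability.

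The obstruction is that $\lambda$ depends on $A$, so I would cover the disk $\{z \in \mathbb{C} : |z| \leq C_0\sqrt n\}$ by a deterministic net $\mathcal{N}$ of polynomial-in-$n$ cardinality with spacing comparable to $\tau$, and prove the singular value bound for each fixed $\lambda_0 \in \mathcal{N}$ and each fixed $J$. For such fixed data, $(A - \lambda_0\Id)_J$ is a genuinely complex $n \times (n-m)$ random matrix translated by the deterministic matrix with $-\lambda_0$ in entry $(j,j)$ for $j \in J$ and zeros elsewhere, so the least singular value bound for genuinely complex rectangular matrices applies (in the form that tolerates such a fixed additive shift and is valid down to thresholds $\varepsilon \geq e^{-c'n}$). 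It yields a tail of the shape $\Prob\!\big(s_{\min}((A - \lambda_0\Id)_J) \leq \varepsilon\,(\sqrt n - \sqrt{n-m})\big) \leq (C\varepsilon)^{c_2 m} + Ce^{-c'n}$, with the exponent $c_2 m$ growing with the deficiency $m$ and — this is the point of the genuinely complex hypothesis — strictly larger than in the real case, which is what upgrades Rudelson--Vershynin's $(\eps s)^6$ to the $(m/n)^{3/2}$-type behaviour and $s^6$ to $\sqrt{t}$. Taking a union bound over the $\binom{n}{m} \leq (en/m)^m$ choices of $I$ and over $|\mathcal{N}|$, and optimizing $\varepsilon$, produces the stated probability; on the complementary event one passes from each actual $\lambda$ to the nearest $\lambda_0 \in \mathcal{N}$ by the triangle inequality (the discrepancy $|\lambda - \lambda_0|\,\|v_J\|_2 \leq |\lambda - \lambda_0|$ being absorbed since the net spacing is $\lesssim \tau$), obtaining the bound for every eigenvector and every $I$ with $|I| = m$ at once; the case $|I| > m$ follows by restricting to an $m$-element subset.

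The main work, and the reason Theorem \ref{thm:eigenvector} and Theorem \ref{thm:eigenvector-small} are stated separately, is the interaction among the exponent $c_2 m$ in the singular value tail, the factor $\binom{n}{m}$, and the polynomial cardinality of the net: when $m \geq \log^2 n$ these binomial and net factors are absorbed into the constants and one gets the clean $(m/n)^{3/2}$ bound of Theorem \ref{thm:eigenvector}, whereas for $1 \leq m \leq \log^2 n$ taking the $m$-th root of these polynomial factors costs a multiplicative $n^{-O(1/m)} = (m/n)^{O(1/m)}$ together with the $\log^{-2} n$ loss, producing exactly the correction in Theorem \ref{thm:eigenvector-small}. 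Accordingly, the genuinely complex rectangular least singular value bound — its exponent, its uniformity in the additive diagonal shift, and its validity for exponentially small thresholds — is the crux; everything else is the net-and-union-bound bookkeeping sketched above, which I expect to be routine once that estimate is in hand.
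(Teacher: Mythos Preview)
Your reduction is exactly the one in the paper (Propositions~\ref{prop:delocgeneral}--\ref{prop:deloc}): restrict to $\|A\|\le M\sqrt n$, cover the spectrum by a $\delta M\sqrt n$--net, union bound over $\binom{n}{m}$ choices of $I$, and feed in a least singular value bound for the $n\times(n-m)$ minor. So the architecture is right.

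The point you are glossing over, however, is \emph{which} least singular value bound to use, and this is precisely what distinguishes the small-$m$ theorem from Theorem~\ref{thm:eigenvector}. If you plug in the generic rectangular bound (Theorem~\ref{thm:rectangular}), the tail is $(C\eps)^{2m+1}$; after multiplying by the net factor $9/\delta^2\asymp (n/m)^2\eps^{-2}$ and by $(ne/m)^m$ and optimizing, one finds for $m=1$ that $\delta\asymp t/n^4$, not the $\sqrt t\,n^{-5/2}\log^{-2}n$ claimed. The paper instead invokes Theorem~\ref{thm:nearlysquare} with $T=\log^2 n$, whose tail is $(C\sqrt T\,\eps)^{2(m+1)}=(C\eps\log n)^{2(m+1)}$: the extra power of $\eps$ exactly cancels the $\eps^{-2}$ coming from the net (so one keeps the $\sqrt t$ rather than $t$), at the price of the $\log n$ inside the parentheses. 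Solving $(n/m)^{1+2/m}(\log n)^{2+2/m}\eps^2=t$ gives $\eps=\sqrt t\,(m/n)^{1/2+1/m}(\log n)^{-(1+1/m)}$ and hence $\delta\gg\sqrt t\,(m/n)^{3/2+1/m}\log^{-2}n$. In particular, the $\log^{-2}n$ loss does not come from ``taking the $m$-th root of the polynomial factors'' as you wrote (that produces the $(m/n)^{1/m}$ correction), but from the $\sqrt T$ in the nearly-square least singular value estimate. Your sketch would go through once you swap in Theorem~\ref{thm:nearlysquare} and redo the bookkeeping accordingly.
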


As a corollary, we immediately obtain the following in the case that $m = 1$.  

\begin{corollary} \label{cor:mincoord}
Assume $A$ is an $n \times n$ genuinely complex random matrix.  Then there exist constants $C, c, c' > 0$ such that for every $t \geq e^{-c' n}$, with probability at least $1 - Ct$, every eigenvector $v$ of $A$ satisfies
\[ |v_i| \geq c \sqrt{t} \frac{1}{n^{5/2} \log^2 n} \|v \|_2 \text{ for all } 1 \leq i \leq n.  \]
Here $C, c, c'$ depend only on the subgaussian moment bound $B$. 
\end{corollary}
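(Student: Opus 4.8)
The plan is to deduce this statement directly from Theorem~\ref{thm:eigenvector-small} by specializing to $m = 1$; it is a pure corollary, so essentially no new work is needed. For all $n$ large enough that $\log^2 n \geq 1$, the pair $m = 1$ lies in the admissible range $1 \leq m \leq \log^2 n$, so Theorem~\ref{thm:eigenvector-small} applies. With $m = 1$ the exponent appearing there becomes $3/2 + 1/m = 5/2$, so $(m/n)^{3/2 + 1/m} = n^{-5/2}$, and applying the conclusion of that theorem to the singleton sets $I = \{i\}$ gives
\[
	|v_i| = \|v_{\{i\}}\|_2 \geq c\,\frac{\sqrt{t}}{\log^2 n}\cdot\frac{1}{n^{5/2}}\,\|v\|_2 \qquad \text{for all } 1 \leq i \leq n,
\]
which is exactly the bound claimed in the corollary (the bound for larger $I$ is then automatic since $\|v_I\|_2 \geq \max_{i \in I} |v_i|$, but only the singleton case is needed here).

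Next I would verify the probability estimate. Theorem~\ref{thm:eigenvector-small} with $m = 1$ yields the displayed bound on an event of probability at least $1 - (Ct)^1 - Ce^{-c'n} = 1 - Ct - Ce^{-c'n}$. Since we assume $t \geq e^{-c'n}$, we have $Ce^{-c'n} \leq Ct$, so the failure probability is at most $2Ct$; after relabeling $2C$ as $C$ this matches the asserted $1 - Ct$. (If the resulting $Ct \geq 1$ the statement is vacuous.) Finally, the finitely many small values of $n$ with $\log^2 n < 1$ are handled by shrinking $c$ so that the claimed lower bound holds trivially for those $n$.

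Since the statement is a corollary, there is no genuine obstacle. The only points that require any attention are the elementary bookkeeping of constants — in particular absorbing the $Ce^{-c'n}$ term into $Ct$ using the hypothesis $t \geq e^{-c'n}$, and disposing of the small-$n$ regime — all of which are routine.
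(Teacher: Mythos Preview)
Your proposal is correct and matches the paper's approach: the paper simply states this as an immediate corollary of Theorem~\ref{thm:eigenvector-small} with $m=1$, and your bookkeeping (absorbing $Ce^{-c'n}$ into $Ct$ via $t \ge e^{-c'n}$ and handling small $n$) is exactly the routine verification needed.
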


Corollary \ref{cor:mincoord} implies that with probability at least $1 - C e^{-c'n}$, every coordinate of every eigenvector is nonzero.  In particular, this implies that, with the same probability, each eigenspace of $A$ has dimension one.  Indeed, if $A$ has an eigenspace of dimension greater than one, then this eigenspace must have a non-trivial intersection with the orthogonal complement of the space spanned by $e_i$, where $e_1, \ldots, e_n$ are the standard basis elements in $\mathbb{C}^n$.

\subsection{Results for eigenvectors of matrices with real entries} 

In this subsection, we consider eigenvectors of real matrices.  Our first result is the analogue of Theorem \ref{thm:eigenvector} for the eigenvectors of $A$ corresponding to real eigenvalues.  When the entries of $A$ are iid real standard normal random variables, the number of real eigenvalues was studied in \cite{MR1231689}.  The existence of real eigenvalues for random matrices with non-normal entries was established more recently in \cite{TVnonhermitian}, under the assumption the entries match the first four moments of the standard Gaussian distribution.  

\begin{theorem} \label{thm:eigenvectorreal}
Assume $A$ is an $n \times n$ real random matrix whose entries are independent copies of a mean zero subgaussian random variable with unit variance.  Then there exist constants $C, c, c' > 0$ such that for every $t \geq e^{-c' n}$ and $\log^2 n \leq m \leq c'n$, with probability at least $1 - (Ct)^m - Ce^{-c'n}$, every eigenvector $v \in \mathbb{R}^n$ of $A$ corresponding to a real eigenvalue satisfies
\[ \|v_I \|_2 \geq c {t} \left( \frac{m}{n} \right)^{2} \|v \|_2 \text{ for all } I \subset[n] \text{ with } |I| \geq m. \]
Here $C, c, c'$ depend only on the subgaussian moment of the entries.  
\end{theorem}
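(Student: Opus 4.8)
The plan is to reduce the no-gaps statement to an invertibility bound for a tall rectangular submatrix, following exactly the scheme behind Theorem~\ref{thm:eigenvector} but feeding in the least singular value bound for \emph{real} rectangular random matrices (with a deterministic shift) instead of the genuinely complex one. Throughout we condition on $\mathcal{G}:=\{\|A\|\le M\sqrt n\}$; for subgaussian entries $\P(\mathcal{G})\ge 1-e^{-cn}$ once $M$ is a large enough constant depending only on the subgaussian moment, and $\mathcal{G}^{c}$ absorbs one of the $Ce^{-c'n}$ terms. Since $\|v_I\|_2$ is non-decreasing in $I$ and the claimed bound is homogeneous in $v$, it suffices to show that, with the stated probability, every real unit eigenvector $v$ with real eigenvalue satisfies $\|v_I\|_2\ge \rho:=ct(m/n)^2$ for all $I$ with $|I|=m$; we may assume $Ct<1$, so $\rho<1$, and $c'$ will be taken small relative to the constant $c$ produced by the rectangular bound below.

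\emph{Step 1 (reduction to a shifted rectangular matrix).} Fix $I$ with $|I|=m$, set $J:=[n]\setminus I$, $|J|=n-m$, and let $B_J:=(A-\lambda I)_{\cdot,J}=A_{\cdot,J}-\lambda P_J$, where $A_{\cdot,J}$ is an $n\times(n-m)$ matrix with iid mean-zero, unit-variance, subgaussian entries and $P_J$ is the fixed $n\times(n-m)$ isometry with columns $\{e_j:j\in J\}$. For a real unit eigenvector $v$ with real eigenvalue $\lambda$ we have $|\lambda|\le\|A\|\le M\sqrt n$ on $\mathcal{G}$, and from $(A-\lambda I)v=0$ we get $B_J v_J=-(A-\lambda I)_{\cdot,I}v_I$, hence $\|B_J v_J\|_2\le(\|A\|+|\lambda|)\|v_I\|_2\le 2M\sqrt n\,\|v_I\|_2$. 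If $\|v_I\|_2<\rho$ then, since $\|v_J\|_2\ge\sqrt{1-\rho^2}\ge 1/2$, this forces $s_{n-m}(B_J)\le 4M\sqrt n\,\rho$, where $s_{n-m}$ is the smallest singular value.

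\emph{Step 2 (net over the eigenvalue) and Step 3 (rectangular invertibility).} On $\mathcal{G}$ every real eigenvalue lies in $[-M\sqrt n,M\sqrt n]$; let $\Lambda$ be an $(M\sqrt n\,\rho)$-net of this interval, so $|\Lambda|\le C/\rho$. Replacing $\lambda$ by the nearest $\lambda_0\in\Lambda$ changes $s_{n-m}$ by at most $|\lambda-\lambda_0|\|P_J\|\le M\sqrt n\,\rho$, so the event that some real eigenvector violates the conclusion is contained in $\mathcal{G}^c\cup\bigcup_{J,\lambda_0}\{s_{n-m}(A_{\cdot,J}-\lambda_0 P_J)\le 5M\sqrt n\,\rho\}$. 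The key input is the real counterpart of the rectangular bound used for Theorem~\ref{thm:eigenvector}: if $M$ is $n\times(n-m)$ with iid mean-zero, unit-variance, subgaussian entries and $D$ is any fixed $n\times(n-m)$ matrix, then
\[
\P\!\left(s_{n-m}(M+D)\le \eps\,\frac{m+1}{\sqrt n}\right)\le (C\eps)^{m+1}+e^{-cn},
\]
with $C,c$ depending only on the subgaussian moment. Apply this with $M=A_{\cdot,J}$, $D=-\lambda_0 P_J$ and $\eps(m+1)/\sqrt n=5M\sqrt n\,\rho$, so $\eps\asymp n\rho/m$, and union bound over the $\binom{n}{m}\le(en/m)^m$ choices of $J$ and $\le C/\rho$ choices of $\lambda_0$; the probability of the bad event is at most $\frac{C}{\rho}\left(\frac{en}{m}\right)^m\left(\frac{C'n\rho}{m}\right)^{m+1}+\binom{n}{m}\frac{C}{\rho}e^{-cn}$. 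With $\rho=ct(m/n)^2$ one computes $C'n\rho/m\asymp tm/n$ and the first term simplifies to $C_1(n/m)(C_2t)^m$; since $m\ge\log^2 n$ we have $n/m\le e^{\sqrt m}$, so $C_1(n/m)(C_2t)^m\le C_1(eC_2t)^m\le(Ct)^m$. For the second term, choosing $c'$ small enough (so $m\le c'n$ keeps $\log\binom{n}{m}$ and, via $t\ge e^{-c'n}$, $\log(1/\rho)$ below $cn/2$) makes it $\le Ce^{-c'n}$. Unwinding Step 1, on the complement of this event every real eigenvector $v$ corresponding to a real eigenvalue satisfies $\|v_I\|_2\ge ct(m/n)^2\|v\|_2$ for all $I$ with $|I|\ge m$, as claimed.

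\emph{Main obstacle.} The crux is the rectangular least singular value estimate in Step 3 in the real, additively perturbed regime, at scale $(m+1)/\sqrt n$ and — crucially — with exponent $m+1$ in the small-ball probability: this exponent is exactly what beats the $\binom{n}{m}$-sized union bound with a margin, and the extra $+1$ (rather than just $m$) is precisely what lets one absorb both the $1/\rho$ factor from the eigenvalue net and the residual powers of $m/n$. That $1/\rho$ loss is in turn what forces the hypothesis $m\ge\log^2 n$: for smaller $m$ the leftover polynomial-in-$n$ factor cannot be folded into an $m$-th power. A minor but genuine point is that $\lambda_0 P_J$ shifts $n-m$ of the $n$ rows, so the perturbed form of the rectangular bound is really needed, not just the centered one; similarly, reality of $\lambda$ and $v$ is used both to keep the $\lambda$-net one-dimensional (size $\lesssim 1/\rho$ rather than $1/\rho^2$) and to stay within the real rectangular bound.
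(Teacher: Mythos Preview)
Your proposal is correct and follows essentially the same route as the paper: reduce to invertibility of the $n\times(n-m)$ submatrix $(A-\lambda_0)_{I^c}$ via the localization event (the paper packages this as Proposition~\ref{prop:delocreal}), take a one-dimensional $\delta M\sqrt{n}$-net over the real eigenvalue (giving the $1/\delta$ loss), feed in the real rectangular least singular value bound with exponent $m+1$ (the paper's Theorem~\ref{thm:RVlsv}), and balance so that the threshold comes out $\asymp t(m/n)^2$ while the union bound collapses to $(Ct)^m$. The only cosmetic differences are that the paper first fixes $\eps$ via $6\delta M\sqrt n=\eps(\sqrt n-\sqrt{n-m-1})$ and then solves $(n/m)^{1+1/m}\eps=t$ (using $m\ge\log^2 n$ to replace $(m/n)^{(m+1)/m}$ by $\Theta(m/n)$), whereas you fix $\rho$ directly and absorb the leftover $n/m$ factor via $n/m\le e^{\sqrt m}\le e^m$; and that you state the rectangular input for an arbitrary deterministic perturbation $D$, while the paper's Theorem~\ref{thm:RVlsv} is phrased for the diagonal shift $A-\lambda$ with $|\lambda|\le M\sqrt N$---but your $D=-\lambda_0 P_J$ becomes exactly that form after a row permutation, so no extra strength is actually used.
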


For smaller values of $m$, we have the following bound.  

\begin{theorem} \label{thm:eigenvector-smallreal}
Assume $A$ is an $n \times n$ real random matrix whose entries are independent copies of a mean zero subgaussian random variable with unit variance.  Then there exist constants $C, c, c' > 0$ such that for every $t \geq e^{-c' n}$ and $1 \leq m \leq \log^2 n$, with probability at least $1 - (Ct)^m - Ce^{-c'n}$, every eigenvector $v \in \mathbb{R}^n$ of $A$ corresponding to a real eigenvalue satisfies
\[ \|v_I \|_2 \geq c {t} \left( \frac{m}{n} \right)^{2 + 1/m} \|v \|_2 \text{ for all } I \subset[n] \text{ with } |I| \geq m. \]
Here $C, c, c'$ depend only on the subgaussian moment of the entries.  
\end{theorem}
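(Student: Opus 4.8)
The plan is to run the same argument used for Theorem \ref{thm:eigenvectorreal} but track how the small-$m$ regime forces an extra $(m/n)^{1/m}$ loss. Fix a subset $I \subset [n]$ with $|I| = m \leq \log^2 n$ and an eigenvector $v \in \mathbb{R}^n$ with $Av = \lambda v$, $\lambda \in \mathbb{R}$. Writing $J = [n] \setminus I$ and splitting the eigenvalue equation along the coordinates of $J$, one obtains that $v_J$ lies in the kernel of the $(n-m) \times n$ matrix $A_J - \lambda P_J$ (the rows indexed by $J$, with the diagonal shift), so $(A_{J \times J} - \lambda \Id) v_J = -A_{J \times I} v_I$. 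Hence $\|v_I\|_2$ is controlled from below once we know that $A_{J \times I}$, an $(n-m) \times m$ rectangular matrix with iid subgaussian entries, does not annihilate too much: concretely $\|A_{J\times I} v_I\|_2 \geq s_{\min}(A_{J\times I})\|v_I\|_2$ is the wrong direction, so instead one uses $\|v_J\|_2 \|A_{J\times J} - \lambda\Id\| \geq \|A_{J\times I} v_I\|_2$ together with a lower bound for the latter. The standard device (as in \cite{RVnogaps}) is to instead bound, uniformly over unit vectors $w$ in the relevant low-dimensional space, the smallest singular value of a rectangular submatrix; in the real case the relevant invertibility estimate produces, for an $(n-m)\times m$ matrix, a bound of the shape $\Prob(s_{\min} \leq t\sqrt{n}) \lesssim (Ct)^{m}$, which is exactly where the $(Ct)^m$ in the probability comes from.

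The key steps, in order: (1) reduce, via the eigenvalue equation restricted to $J$, the statement $\|v_I\|_2 \geq \rho\|v\|_2$ to a statement that no unit vector supported near $I$ is too close to the column space of $A_{J\times J}-\lambda\Id$; (2) discretize over $\lambda$ on a net of size $\mathrm{poly}(n)$ inside $\{|\lambda|\leq M\sqrt n\}$, using $\|A\|\leq M\sqrt n$ with probability $1-Ce^{-c'n}$ (which is why that term appears); (3) for each fixed $I$ and each net point $\lambda$, apply the least singular value bound for the rectangular real matrix $A_{J\times I}$ (together with a compressible/incompressible decomposition à la Rudelson–Vershynin on the $m$-dimensional side) to get, with probability at least $1-(Ct)^m - Ce^{-c'n}$, a bound $\|A_{J\times I}v_I\|_2 \geq c t \sqrt n \,(\text{something})^{} \|v_I\|_2$; (4) union bound over the $\binom{n}{m}\leq (en/m)^m$ choices of $I$ and over the net — here $(en/m)^m (Ct)^m$ is still of the form $(C't)^m$ up to absorbing the $e n / m$ factor, and this is precisely the source of the extra $(m/n)^{1/m}$: to make $(en/m)^m \cdot (\text{failure prob})^m$ small one must replace $t$ by $t\cdot(m/n)^{1/m}$ in the per-event estimate, which propagates (through the power in the invertibility bound) to the displayed exponent $2 + 1/m$; (5) for small $m \leq \log^2 n$ one cannot afford to lose the $\log$ factors into the polynomial exponent as in the large-$m$ regime, so one keeps them explicitly, yielding the $c\, t\, (m/n)^{2+1/m}$ form — and absorbing net/operator-norm logarithmic losses accounts for any $\log n$ factors (in the complex analogue these appear as $\log^2 n$; in the real case the statement as given has them absorbed into constants, matching Theorem \ref{thm:eigenvectorreal}).

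The main obstacle is step (3): obtaining a quantitative lower bound for $\|A_{J\times I} v_I\|_2$ that is uniform over all unit $v_I \in \mathbb{R}^m$ with the correct $t$-dependence and the correct $(Ct)^m$ failure probability. This is exactly a least-singular-value estimate for a genuinely real (not complex) $(n-m)\times m$ rectangular random matrix with $m$ possibly as small as $1$; the compressible part is handled by an $\varepsilon$-net argument on $S^{m-1}_{\mathbb{R}}$ (cheap since $m$ is small) combined with a tensorization/small-ball estimate on the $n-m \approx n$ rows, while the incompressible part requires a Littlewood–Offord / distance-to-subspace argument. One must be careful that, unlike the genuinely complex case treated elsewhere in the paper (where the $\varepsilon^2$ small-ball bound gives the stronger $(m/n)^{3/2}$ exponent), the real small-ball estimate only gives $\varepsilon$, which is why the real exponents are $2$ and $2+1/m$ rather than $3/2$ and $3/2+1/m$; checking that this single-power-of-$\varepsilon$ loss propagates correctly through every step — and in particular that the union bound over $I$ survives with the stated probability for $m$ as small as a constant — is the delicate part of the argument.
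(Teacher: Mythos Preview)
Your reduction in step (1) is set up incorrectly, and this propagates through the rest of the sketch. You restrict the eigenvalue equation to the \emph{rows} indexed by $J=I^c$, obtaining $(A_{J\times J}-\lambda)v_J=-A_{J\times I}v_I$, and then propose to control $\|v_I\|_2$ via the tall $(n-m)\times m$ matrix $A_{J\times I}$. But any least-singular-value bound for $A_{J\times I}$ gives $\|A_{J\times I}v_I\|_2\ge s_{\min}(A_{J\times I})\|v_I\|_2$, which, combined with your equation and an operator-norm bound on $A_{J\times J}-\lambda$, yields an \emph{upper} bound on $\|v_I\|_2$, not a lower one. You note this (``the wrong direction'') but then simply assert that ``the standard device'' fixes it; no such device applied to $A_{J\times I}$ does. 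The only way your row-restricted equation could produce a lower bound on $\|v_I\|_2$ is through a lower bound on $s_{\min}(A_{J\times J}-\lambda)$ for the \emph{square} $(n-m)\times(n-m)$ block, which is a different (and harder) object than the one you analyze.

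The paper's route avoids this entirely by restricting \emph{columns}, not rows. From $(A-\lambda)v=0$ one writes $(A-\lambda)_{I^c}v_{I^c}=-(A-\lambda)_Iv_I$, so that a lower bound on $s_{\min}\big((A-\lambda_0)_{I^c}\big)$ for the $n\times(n-m)$ tall matrix, together with $\|v_{I^c}\|_2\ge 1/2$, forces $\|v_I\|_2$ to be large; this is exactly Proposition~\ref{prop:delocreal}. One then invokes Theorem~\ref{thm:RVlsv} to get $p_0\le (C\eps)^{m+1}+e^{-cn}$, feeds this into $q\le \tfrac{3}{\delta}(ne/m)^m p_0$ with $6\delta M\sqrt n=\eps(\sqrt n-\sqrt{n-m-1})$, and solves for $\eps$ so that $(n/m)^{1+1/m}\eps=t$. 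This yields $\delta\gg t(m/n)^{2+1/m}$ directly; the extra $1/m$ in the exponent comes from the prefactor $3/\delta$ (i.e.\ the one-dimensional net over real $\lambda_0$), not merely from the $\binom{n}{m}$ union bound as you suggest. No separate compressible/incompressible argument on $S^{m-1}_{\mathbb R}$ is needed at this stage, because the least-singular-value input already packages all of that.
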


\subsection{Normal vectors to random hyperplanes} \label{sec:main:normal}

As an application of our methods, we now consider delocalization bounds for normal vectors to random hyperplanes.  
Let $A$ be an $(n-1) \times n$ independent-entry random matrix.  As this matrix is ill-conditioned, there exists at least one unit vector $v$ so that $Av = 0$.  Stated another way, this means that there is at least one unit vector that is orthogonal to the rows of $A$.  In fact, under very general conditions on the entries, $A$ has rank $n-1$.  In this case, $v$ is uniquely determined up to a phase.  
Nguyen and Vu studied the normal vector $v$ when the entries of $A$ are centered iid subgaussian random variables \cite{NV}. 

In this section, we extend the results in \cite{NV} to include some additional delocalization properties for the normal vector $v$.  Intuitively, one expects $v$ to behave like a random vector uniformly distributed on the unit sphere.  In fact, in the case when $A$ has standardized Gaussian entries this is precisely the distribution of $v$.  

We begin by considering the case when $m$ is proportional to $n$.  Let us introduce the following notation.  Let $F$ be the cumulative distribution function of the $\chi^2$-distribution with two degrees of freedom. Following the notation in \cite{CHM}, let $Q$ denote the quantile function of $F$.  That is, 
\begin{equation} \label{eq:def:Q}
	Q(s) := \inf \{ x \in \mathbb{R} : F(x) \geq s \}, \quad 0 < s \leq 1, \quad Q(0) := \lim_{s \searrow 0} Q(s). 
\end{equation}
Define 
\begin{equation} \label{eq:def:H}
	H(s) := - Q(1-s), \quad 0\le s <1. 
\end{equation}

\begin{theorem} \label{thm:nogapsasym}
Suppose $\xi$ and $\xi'$ are iid real subgaussian random variables with mean zero and unit variance.  Let $A$ be an $(n-1) \times n$ iid matrix whose entries are iid copies of $\xi + \sqrt{-1} \xi'$, and let $v$ be any unit vector which satisfies $Av = 0$.  Then, for any fixed $1 > \delta > 0$, 
$$ \max_{I\subset [n]: |I|=\lfloor\delta n \rfloor} \|v_I\|^2_2 \longrightarrow -\int_{0}^{\delta} H(u) \, du  $$
and
$$ \min_{I\subset [n]: |I|=\lfloor\delta n \rfloor} \|v_I\|^2_2 \longrightarrow -\int_{1-\delta}^1 H(u) \, du  $$
in probability as $n \to \infty$, where $H$ is defined in \eqref{eq:def:H}.
\end{theorem}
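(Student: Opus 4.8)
The plan is to reduce everything to the convergence of the empirical distribution of the rescaled squared coordinates of $v$. First observe that $\max_{I:|I|=\lfloor\delta n\rfloor}\|v_I\|_2^2$ is exactly the sum of the $\lfloor\delta n\rfloor$ largest numbers among $|v_1|^2,\dots,|v_n|^2$, while $\min_{I:|I|=\lfloor\delta n\rfloor}\|v_I\|_2^2$ is the sum of the $\lfloor\delta n\rfloor$ smallest; moreover $A$ has rank $n-1$, so $v$ is unique up to a phase, with probability $1-o(1)$, and $|v_i|$ is well defined. Thus it suffices to control the order statistics of $(|v_1|^2,\dots,|v_n|^2)$, for which the natural object is the empirical measure $\mu_n:=\frac1n\sum_{i=1}^n\delta_{n|v_i|^2}$. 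In the Gaussian case $v$ is exactly uniform on $S_{\mathbb C}^{n-1}$, so there the statement is a classical fact about order statistics of the Dirichlet law (via the exponential representation and the L-statistic limit theorem); the content for general subgaussian entries is a universality statement for $\mu_n$.

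I would first show that $\mu_n$ converges weakly, in probability, to the law $\mu$ that governs $n|v_1|^2$ in the limit --- namely the law of the square modulus of one coordinate of a uniformly distributed unit vector in $\mathbb{C}^n$, rescaled by $n$, which is the (appropriately normalized) $\chi^2$ law with two degrees of freedom whose quantile function is the $Q$ of the statement. Since the distribution of $A$ is invariant under permuting columns, the $|v_i|^2$ are exchangeable, so $\E\mu_n$ is the law of $n|v_1|^2$; its convergence to $\mu$ is the one-coordinate distributional universality of the normal vector, which follows from the representation $v=v_n\bigl(-(A')^{-1}a,\,1\bigr)$ ($A'$ the first $n-1$ columns, $a$ the last), a CLT for $\bigl((A')^{-1}a\bigr)_1$ conditional on $A'$, and a Hanson--Wright-type concentration of $\|(A')^{-1}a\|_2^2$ --- or directly from the distributional results for normal vectors in \cite{NV}. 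For the concentration of $\mu_n$ about its mean I would use a bounded-differences argument: replacing one row of $A$ changes $v$, and hence $\mu_n$ in the L\'evy metric, by $O_{\mathbb P}(\mathrm{polylog}(n)/n)$, with the stability of the null vector under rank-one row updates controlled by lower bounds on the least singular values of the $(n-1)\times(n-1)$ minors of $A$, which are $\ge n^{-O(1)}$ with probability $1-o(1)$ by the rectangular least-singular-value estimates forming the technical core of the paper. Combining the two gives $\mu_n((-\infty,x])\to F_\mu(x)$ in probability for every $x\ge0$.

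To pass from weak convergence of $\mu_n$ to convergence of the truncated sums, note that mass is exactly conserved, $\int x\,d\mu_n(x)=\frac1n\sum_i n|v_i|^2=1=\int x\,d\mu(x)$, and establish the uniform-integrability bound $\int x^2\,d\mu_n(x)=n\|v\|_4^4=O(1)$ with probability $1-o(1)$; this follows from the exact Dirichlet computation $\E|v_1|^4=\tfrac{2}{n(n+1)}$ in the Gaussian case together with the same universality input (a cruder estimate $n\|v\|_4^4\le n\|v\|_\infty^2\|v\|_2^2=\mathrm{polylog}(n)$ via $\ell^\infty$-delocalization of the normal vector would also suffice after a limiting argument). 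Markov's inequality then gives $\int_{x>T}x\,d\mu_n(x)\le C/T$ uniformly in $n$, so weak convergence of $\mu_n$ upgrades to convergence of $\int_B x\,d\mu_n(x)$ for Borel $B$ with $\mu(\partial B)=0$, and the empirical $(1-\delta)$- and $\delta$-quantiles of $\mu_n$ converge to $Q(1-\delta)$ and $Q(\delta)$. Taking $B=(Q(1-\delta),\infty)$ for the maximum and $B=[0,Q(\delta))$ for the minimum, and using the mass constraint to reconcile the exact count $\lfloor\delta n\rfloor$ with the quantile cut-off, yields $\max_{|I|=\lfloor\delta n\rfloor}\|v_I\|_2^2\to\int_{1-\delta}^1 Q(u)\,du$ and $\min_{|I|=\lfloor\delta n\rfloor}\|v_I\|_2^2\to\int_0^\delta Q(u)\,du$ in probability. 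Finally the substitution $u\mapsto 1-u$ and $H(s)=-Q(1-s)$ rewrite these limits as $-\int_0^\delta H(u)\,du$ and $-\int_{1-\delta}^1 H(u)\,du$, as claimed.

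I expect the main obstacle to be the first step --- the distributional universality and, especially, the concentration of $\mu_n$ --- since it requires quantitative control of the normal vector beyond the crude no-gaps bound: either a CLT with a Berry--Esseen-type rate for the coordinates of $v$ together with control of the joint law of pairs of coordinates, or a quantitative stability estimate for the null space under row perturbations, which is precisely where the least-singular-value machinery for genuinely complex rectangular matrices enters. By contrast the behavior of the smallest coordinates (the minimum) is comparatively soft once $\mu_n\Rightarrow\mu$ is in hand, as there is no uniform-integrability issue near $0$; the $\ell^4$ bound of the third step is needed only for the maximum.
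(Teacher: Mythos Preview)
Your overall strategy---reduce to the empirical measure $\mu_n=\frac1n\sum_i\delta_{n|v_i|^2}$, show it converges to the $\chi^2_2$ law, and read off the partial sums of order statistics---is exactly what the paper does. The difference, and the gap in your proposal, is in the concentration step.

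Your bounded-differences argument on the rows of $A$ does not work. If you delete row $r_j$, both the old null vector $v$ and the new null vector $v'$ (after replacing $r_j$ by an independent copy $r_j'$) lie in the two-dimensional space $H^\perp$, where $H=\operatorname{span}(r_i:i\neq j)$. Within $H^\perp$, $v$ is orthogonal to $P_{H^\perp}r_j$ and $v'$ to $P_{H^\perp}r_j'$; since $r_j'$ is independent of $r_j$, the angle between these two projected vectors is typically $\Theta(1)$, hence so is $\|v-v'\|_2$. Consequently \emph{every} coordinate $n|v_i|^2$ moves by $\Theta(1)$ and $\mu_n$ shifts by $\Theta(1)$ in the L\'evy metric, not $O(\mathrm{polylog}(n)/n)$. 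The least singular values of $(n-1)\times(n-1)$ minors do not rescue this: even entrywise perturbations are uncontrolled, since $s_{n-1}(A)$ is only of order $n^{-1/2}$, so Davis--Kahan gives nothing. The null space of an almost-square matrix is simply too unstable for a Lipschitz/McDiarmid approach.

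The paper's fix is precisely the alternative you mention in passing: it invokes the quantitative $d$-tuple universality for the coordinates of the normal vector from \cite{NV} (stated here as Theorem~\ref{thm:normalcoordinates}), which gives the joint law of any $n^c$ coordinates close to i.i.d.\ complex Gaussians. With this in hand, for the counting variables $N(c,k)=\sum_i\mathbf{1}\{n|v_i|^2\in[c2^{k-1},c2^k)\}$ one gets $\E N(c,k)=(1+o(1))f(c,k)$ from single coordinates and $\operatorname{Var}(N(c,k))=o(n^2)$ from pairs, and Chebyshev finishes the concentration. No stability or martingale argument is needed; everything is borrowed from \cite{NV}. So your ``expected main obstacle'' is real, but the resolution is not the least-singular-value machinery of this paper---it is the black-box result from \cite{NV}.

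One smaller point: for the maximum you propose uniform integrability via $n\|v\|_4^4=O(1)$. This can be made to work, but the paper sidesteps it entirely with the identity
\[
\max_{|I|=\lfloor\delta n\rfloor}\|v_I\|_2^2 \;=\; 1-\min_{|I|=n-\lfloor\delta n\rfloor}\|v_I\|_2^2,
\]
together with $-\int_0^1 H(u)\,du=1$, so that only the minimum statement (where there is no tail issue) needs to be proved for each $\delta$.
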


\begin{remark} \label{rem:delta-mass-order}
Following \cite{OVWsurvey}, one can show that, as $\delta$ tends to zero, 
$$ -\int_{0}^{\delta} H(u)~ du = \Theta \left(  \delta \log (\delta^{-1})  \right) $$
and
$$ -\int_{1-\delta}^{1} H(u)~ du = \Theta (\delta^2). $$
In other words, Theorem \ref{thm:nogapsasym} implies that the smallest $\delta n$ coordinates of an eigenvector contribute only $\Theta(\delta^2)$ fraction of the mass, which matches the bounds from Proposition \ref{prop:uniform-vector}.  
\end{remark}

Our next results are the analogues of Theorems \ref{thm:eigenvector} and \ref{thm:eigenvector-small} for the normal vector.    

\begin{theorem} \label{thm:nogaps}
Assume $A$ is an $(n-1) \times n$ genuinely complex matrix.  Then there exist constants $C, c, c' > 0$ such that for every $t \geq e^{-\log^2 n}$ and $\log^2 n \leq m \leq c'n$, with probability at least $1 - (Ct)^m - Ce^{-c'n}$, every nonzero vector $v$ orthogonal to the rows of $A$ satisfies 
\[ \|v_I \|_2 \geq c \sqrt{t} \left( \frac{m}{n} \right)^{3/2} \|v \|_2 \text{ for all } I \subset[n] \text{ with } |I| \geq m. \]
Here $C, c, c'$ depend only on the subgaussian moment bound $B$. 
\end{theorem}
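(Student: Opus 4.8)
The plan is to reduce the statement for the normal vector $v$ to a least-singular-value estimate for genuinely complex rectangular random matrices, in the same spirit as the reduction behind Theorem \ref{thm:eigenvector}, but without needing to take a net over eigenvalues. Fix a subset $I \subset [n]$ with $|I| = m \geq \log^2 n$, and write $I^c = [n] \setminus I$. Split the columns of $A$ accordingly: $A = [A_I \mid A_{I^c}]$, where $A_I$ is $(n-1) \times m$ and $A_{I^c}$ is $(n-1) \times (n-m)$. Since $Av = 0$ we have $A_{I^c} v_{I^c} = -A_I v_I$. The key observation is that $\|v_I\|_2$ being small forces $A_{I^c} v_{I^c}$ to be small, i.e. the unit vector $v_{I^c}/\|v_{I^c}\|_2$ is nearly in the kernel of the $(n-1) \times (n-m)$ matrix $A_{I^c}$. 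Because $n - m \le n-1$, this matrix is a genuinely complex rectangular matrix with fewer columns than rows, and one expects its smallest singular value $s_{\min}(A_{I^c})$ to be of order $\sqrt{n} - \sqrt{n-m} \asymp m/\sqrt n$ with overwhelming probability; combined with $\|v_{I^c}\|_2 \ge \|v\|_2/\sqrt 2$ (guaranteed once we know $\|v_I\|_2$ is small), this gives $\|A_I v_I\|_2 = \|A_{I^c} v_{I^c}\|_2 \gtrsim (m/\sqrt n)\|v\|_2$, and then $\|v_I\|_2 \gtrsim (m/\sqrt n)\|v\|_2 / \|A_I\|$. Since $\|A_I\| \le \|A\| \lesssim \sqrt n$ with probability $1 - Ce^{-c'n}$, this already yields $\|v_I\|_2 \gtrsim m/n \cdot \|v\|_2$, which is off by a factor $(m/n)^{1/2}\sqrt t$ from what is claimed — so the crude singular-value bound must be replaced by the quantitative small-ball version.

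The correct tool is the least singular value theorem for genuinely complex rectangular matrices referenced in the abstract (the generalization of the first author's bound in \cite{luh2018complex}): for the $(n-1)\times(n-m)$ genuinely complex matrix $A_{I^c}$, one has a tail bound of the shape
\[
\P\bigl( s_{\min}(A_{I^c}) \le \varepsilon (\sqrt n - \sqrt{n-m}) \bigr) \le (C\varepsilon)^{2m} + C e^{-c'n},
\]
where the exponent $2m$ (rather than $m$) is exactly the ``genuinely complex'' gain responsible for the extra $\sqrt t$ (one takes $\varepsilon \asymp \sqrt t$ so that $(C\varepsilon)^{2m} \asymp (Ct)^m$) and, after rearranging $\sqrt n - \sqrt{n-m} \asymp m/\sqrt n$, for the correct power $(m/n)^{3/2}$: indeed $\|v_I\|_2 \ge \|A_Iv_I\|_2/\|A_I\| \ge \varepsilon (m/\sqrt n)\|v_{I^c}\|_2 / (M\sqrt n) \asymp \sqrt t \,(m/n)^{3/2}\|v\|_2$. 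I would carry out the argument in this order: (1) condition on the high-probability event $\mathcal E_0 = \{\|A\| \le M\sqrt n\}$; (2) for a fixed $I$ with $|I| = m$, show that if $\|v_I\|_2 < c\sqrt t (m/n)^{3/2}\|v\|_2$ then on the complementary event to the singular-value bound we get a contradiction, so $\P(\exists v : Av=0,\ \|v_I\|_2 \text{ too small}) \le (C\sqrt t)^{2m} + Ce^{-c'n}$; (3) union bound over the $\binom{n}{m} \le (en/m)^m$ choices of $I$, absorbing this factor into $(C\sqrt t)^{2m}$ using $t \ge e^{-\log^2 n}$ and $m \ge \log^2 n$ (this is precisely where the lower restriction $m\ge\log^2 n$ is used: $(en/m)^m \le e^{m\log(en)} \le e^{m\log^2 n} \le (1/t)^m \le t^{-m}$ so the product is still $(C\sqrt t)^m$ up to constants); (4) finally pass from a fixed $I$ of size exactly $m$ to all $I$ with $|I| \ge m$ by monotonicity ($\|v_I\|_2$ only increases with $I$, and $(m/n)^{3/2}$ is the worst case).

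The main obstacle — and the technical heart behind the theorem — is establishing the rectangular genuinely complex least singular value estimate with the $2m$ in the exponent and the correct normalization $\sqrt n - \sqrt{n-m}$; this is the least-singular-value bound advertised in the abstract as ``may be of independent interest,'' and it is presumably proved elsewhere in the paper via a distance-to-subspace / small-ball (Littlewood–Offord for complex vectors) argument, with the doubling of the exponent coming from the fact that a genuinely complex linear combination has two-dimensional small-ball behavior, cf. \eqref{eq:vibnd2}. A secondary subtlety is verifying that $A_{I^c}$ is genuinely complex and rectangular in the required sense — it has $n-1$ rows and $n-m$ columns, so $n - m \le n-1$ holds for $m \ge 1$, putting us in the ``tall'' (injective) regime where $s_{\min}$ is bounded away from zero; one must also check that conditioning on $\mathcal E_0$ (which couples $A_I$ and $A_{I^c}$) does not destroy independence in the singular-value estimate, which is handled by applying the estimate to the unconditioned matrix and intersecting events at the end. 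Once the rectangular bound is in hand, steps (1)–(4) are routine and essentially identical to the corresponding passage in the proof of Theorem \ref{thm:eigenvector} restricted to the single ``eigenvalue'' $0$.
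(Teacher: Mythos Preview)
Your overall strategy is correct and is exactly the paper's approach: the reduction $Av=0\Rightarrow A_{I^c}v_{I^c}=-A_Iv_I$, combined with $\|A\|\le M\sqrt n$, is Proposition~\ref{prop:delocgeneral} specialized to $\lambda_0=0$, and the singular-value input is Theorem~\ref{thm:rectangular} applied to the $(n-1)\times(n-m)$ matrix $A_{I^c}$ (which gives exponent $2m-1$ rather than $2m$, but that is immaterial).

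The gap is in your choice $\varepsilon\asymp\sqrt t$ and the arithmetic built on it. First, $\varepsilon(m/\sqrt n)/(M\sqrt n)=\varepsilon\, m/(Mn)$, so with $\varepsilon=\sqrt t$ you obtain $\|v_I\|_2\gtrsim\sqrt t\,(m/n)\,\|v\|_2$, not $(m/n)^{3/2}$. Second, and fatally, the union bound in step~(3) does not close: your inequality $(en/m)^m\le t^{-m}$ fails whenever $t>m/(en)$, and even when it holds, multiplying $t^{-m}$ by $(C\sqrt t)^{2m}=(C^2t)^m$ leaves $(C^2)^m\ge 1$, not $(C\sqrt t)^m$. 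The $t$-saving from the genuinely complex exponent is exactly eaten by the binomial factor, and you end with a trivial bound.

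The correct calibration (as in the paper) is to let $\varepsilon$ absorb the binomial coefficient: take $\varepsilon\asymp\sqrt{t\,m/n}$ (precisely, the paper takes $\varepsilon=(tm/n)^{m/(2m-1)}$). Then
\[
\Big(\frac{en}{m}\Big)^m(C\varepsilon)^{2m-1}\asymp\Big(\frac{en}{m}\cdot C^2\,\frac{tm}{n}\Big)^m=(eC^2t)^m,
\]
and the localization threshold is $\delta\asymp\varepsilon\cdot m/n\asymp\sqrt t\,(m/n)^{3/2}$, matching the claim. The hypotheses $m\ge\log^2 n$ and $t\ge e^{-\log^2 n}$ are used not to bound $(en/m)^m$ as you suggest, but to justify the simplification $(tm/n)^{m/(2m-1)}=\Theta\big(\sqrt{tm/n}\big)$, i.e.\ to replace the exact exponent $m/(2m-1)$ by $1/2$.
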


\begin{theorem} \label{thm:nogaps-small}
Assume $A$ is an $(n-1) \times n$ genuinely complex matrix.  Then there exist constants $C, c, c' > 0$ such that for every $t > 0$ and $1 \leq m \leq \log^2 n$, with probability at least $1 - (Ct)^m - C e^{-c'n}$, every nonzero vector $v$ orthogonal to the rows of $A$ satisfies 
\[ \|v_I \|_2 \geq c \frac{\sqrt{t}}{\log n} \left( \frac{m}{n} \right)^{3/2} \|v \|_2 \text{ for all } I \subset[n] \text{ with } |I| \geq m. \]
Here $C, c, c'$ depend only on the subgaussian moment bound $B$. 
\end{theorem}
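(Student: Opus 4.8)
The plan is to mirror the argument for Theorem \ref{thm:nogaps} but to handle the small-$m$ regime where the net-based union bound over subsets of size $m$ is no longer affordable at the ``$\eps n$ level.'' Recall the basic mechanism: if $v$ is a unit vector orthogonal to the rows of $A$ and $\|v_I\|_2$ is small for some $|I| = m$, then writing $J = [n] \setminus I$ we have $v$ nearly supported on $J$, so the $(n-1) \times m$ submatrix $A_J$ formed by the columns of $A$ indexed by $I$ (acting on $v_I$) must nearly cancel $A_{[n]\setminus I} v_{J}$; more precisely, the $(n-1) \times m$ matrix $B = A_I$ (columns of $A$ in $I$) must have small least singular value restricted to the direction $v_I/\|v_I\|_2$, i.e. $\|B w\|_2$ is small for the unit vector $w = v_I / \|v_I\|_2$. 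Since $A$ is genuinely complex, $B$ is a genuinely complex $(n-1) \times m$ rectangular random matrix with $m \leq \log^2 n \ll n-1$, so the least singular value bound for genuinely complex rectangular matrices advertised in the abstract applies: $s_{\min}(B) \geq c\sqrt{n}$ with probability at least $1 - (C s)^{?} - Ce^{-c'n}$ for an appropriate failure probability. The genuinely complex structure is exactly what buys the improved exponent — the square appearing in \eqref{eq:vibnd2} versus \eqref{eq:vibnd} — and hence the $(Ct)^m$ rather than $(Ct)^{\text{something smaller}}$.

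The key steps, in order, are as follows. First, I would reduce to a statement about a fixed index set $I$ with $|I| = m$ and then take a union bound over the $\binom{n}{m} \leq n^m$ choices of $I$; this is why we can only afford failure probability $(Ct)^m$ with a genuinely extra factor to beat $n^m$, which forces $t$ to be taken with a small absolute constant and explains the $1/\log n$ loss (the threshold $s$ in the rectangular least singular value bound must be chosen as roughly $t/\log n$, or a small power thereof, to absorb the $n^m = e^{m \log n}$ entropy factor against $(Cs)^{m \cdot \text{const}}$). Second, on the good event $\{\|A\| \leq C\sqrt{n}\} \cap \{s_{\min}(A_I) \geq c\sqrt{n} \text{ for all } |I| = m\}$, I would run the geometric argument: given $Av = 0$ with $\|v_I\|_2 = \alpha$ small, decompose $0 = Av = A_I v_I + A_{[n]\setminus I} v_{[n] \setminus I}$; this does not directly give a lower bound on $\alpha$, so instead I would use the dual/orthogonality formulation — $v$ orthogonal to all rows means $v$ lies in the kernel, and the relevant quantity is $\dist$ of a column span or the action of $A_I^*$ on a fixed direction. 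The cleanest route is: $\|v_I\|_2^{-1} \|A v\|_2 = 0$ forces the unit vector $w = v_I/\alpha$ to satisfy $\|A_I w\|_2 = \|A_{[n]\setminus I} v_{[n]\setminus I}\|_2 / \alpha \cdot (\text{sign})$, but since $\|v_{[n]\setminus I}\|_2 \leq 1$ and $\|A\| \leq C\sqrt n$ we get $\|A_I w\|_2 \leq C\sqrt n / \alpha$ is the wrong direction; instead one bounds $\alpha$ from below by noting $\|A_I w\|_2 \geq s_{\min}(A_I) \geq c\sqrt n$ and $\|A_I v_I\|_2 = \|A_{[n]\setminus I} v_{[n]\setminus I}\|_2$, and one must separately show $\|A_{[n] \setminus I} v_{[n]\setminus I}\|_2$ is not too small — this is handled exactly as in \cite{RVnogaps}/Theorem \ref{thm:nogaps} via an invertibility-over-a-net argument for the complementary block, giving $\|A_{[n]\setminus I} v_{[n] \setminus I}\|_2 \geq c\sqrt{n} \|v_{[n]\setminus I}\|_2 \geq c\sqrt n /2$. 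Combining, $c\sqrt n /2 \leq \|A_I v_I\|_2 \leq \|A\| \|v_I\|_2 \leq C \sqrt n \alpha$, wait — that only gives $\alpha \geq c/C$, which is too strong and hence indicates the correct chain uses $s_{\min}(A_I)$ on the left: $\|A_I v_I\|_2 \geq s_{\min}(A_I)\|v_I\|_2$ is a \emph{lower} bound so is useless here. The genuinely correct argument, which I would write out carefully, compares $\|v_I\|_2$ to $(m/n)^{3/2}$ by a finer net over $w \in S^{m-1}_{\mathbb C}$ with mesh $\sim (m/n)^{3/2}$ combined with the small-ball estimate $\Prob(\|A_I w\|_2 \leq \eps \sqrt n) \leq (C\eps)^{2m}$ coming from the genuinely complex least singular value bound, and then a union bound over both the net and the $n^m$ sets $I$; the extra square in the exponent $2m$ versus the entropy $e^{2m \log(1/\text{mesh})} \cdot n^m$ is what makes it close.

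The main obstacle, and the part requiring the most care, is the bookkeeping of the two competing union bounds — over the $n^m = e^{m\log n}$ subsets $I$ of size $m$, and over an $\eps$-net of the sphere $S^{m-1}_{\mathbb C}$ at scale comparable to the target $(m/n)^{3/2}/\log n$, whose cardinality is $(C n / m)^{2m}$ — against the small-ball probability $(C s)^{cm}$ furnished by the genuinely complex rectangular least singular value theorem. In the regime $m \leq \log^2 n$ these entropy terms are all of the form $e^{O(m \log n)}$, so we need the small-ball exponent to beat $m \log n$; taking $s$ polynomially small in $n$ suffices but would degrade the bound, so the delicate point is to extract, from the least singular value estimate, a probability bound of the shape $(Ct/\log n)^{cm} \cdot e^{-c'n}$ or better — i.e. to see that the $e^{-c'n}$ term (which comes from the $\|A\| \leq C\sqrt n$ event and the genuinely complex least singular value machinery) dominates the entropy, leaving only the benign $\log n$ loss in the main term. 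I would also need to verify the complementary-block invertibility step survives conditioning on the columns in $I$, which is routine since those columns are independent of $A_{[n]\setminus I}$, and to confirm that the $1/\log n$ factor (rather than a $1/\log^2 n$ factor) is really what the argument yields at $m \leq \log^2 n$ — this is where the statement of Theorem \ref{thm:nogaps-small} differs from Theorem \ref{thm:eigenvector-small}, reflecting that the normal-vector problem avoids the extra resolvent/eigenvalue union bound needed in the eigenvector setting.
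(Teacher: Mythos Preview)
Your proposal has a genuine gap: you are working with the wrong submatrix, and as a result the inequalities you try to chain together repeatedly point the wrong way (as you yourself notice twice in the write-up). The paper's argument, encapsulated in Proposition~\ref{prop:delocgeneral} with $\lambda_0=0$, uses the \emph{complementary} block $A_{I^c}$, which is the $(n-1)\times(n-m)$ nearly-square matrix, not the tall thin $(n-1)\times m$ block $A_I$. The correct implication is: if $Av=0$, $\|v\|_2=1$, and $\|v_I\|_2<\delta<1/2$, then
\[
\|A_{I^c}v_{I^c}\|_2=\|A_Iv_I\|_2\le\|A\|\,\|v_I\|_2\le M\sqrt{n}\,\delta,
\]
and since $\|v_{I^c}\|_2\ge 1/2$ this forces $s_{\min}(A_{I^c})\le 2M\sqrt{n}\,\delta$. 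So localization makes $\|A_{I^c}v_{I^c}\|_2$ \emph{small}, not large; your assertion that ``$\|A_{[n]\setminus I}v_{[n]\setminus I}\|_2\ge c\sqrt{n}/2$'' is exactly backwards and cannot hold, because that quantity equals $\|A_Iv_I\|_2\le M\sqrt{n}\,\delta$.

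Once you switch to $A_{I^c}$, the rest is a direct computation. One applies the nearly-square genuinely complex least singular value bound (Theorem~\ref{thm:nearlysquare}) with $N=n-1$, the number of columns $n-m$, and $T=m\le\log^2 n$, obtaining
\[
\Prob\big(s_{\min}(A_{I^c})\le\eps(\sqrt{n-1}-\sqrt{n-m-1})\big)\le(C\eps\log n)^{2m}+Ce^{-cn}.
\]
The union bound over $I$ costs only $\binom{n}{m}\le(ne/m)^m$; there is no additional net over $S^{m-1}_{\mathbb{C}}$ to carry. Choosing $\eps=\frac{\sqrt{t}}{\log n}\sqrt{m/n}$ makes $(n/m)(\log n)^2\eps^2=t$, so the main term becomes $(eC^2t)^m$, and with $6\delta M\sqrt{n}=\eps(\sqrt{n-1}-\sqrt{n-m-1})$ one gets $\delta\gg\frac{\sqrt{t}}{\log n}(m/n)^{3/2}$. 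The single $\log n$ loss (versus $\log^2 n$ in Theorem~\ref{thm:eigenvector-small}) comes, as you correctly surmise at the end, from not needing the $1/\delta^2$ eigenvalue-net factor of Proposition~\ref{prop:deloc}.
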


\begin{remark}
More generally, Theorems \ref{thm:nogaps} and \ref{thm:nogaps-small} can be extended to cases where $A$ is an $(n-k) \times n$ matrix and $m \geq k$ using the same methods, but the lower bound for $\|v_I \|_2$ in these cases is substantially more cumbersome to notate.      
\end{remark}

\subsection{Outline of the paper}

The paper is organized as follows.  In Section \ref{sec:outline}, we give an overview of our argument by showing how delocalization properties for the eigenvectors of the square matrix $A$ can be reduced to questions concerning the least singular value of rectangular sub-matrices of $A$.  Similar reductions have been utilized before, and our arguments in this section follow closely those in \cite{RVnogaps}.  We establish a bound for the least singular value of genuinely complex rectangular random matrices in Section \ref{sec:proof:rectangular}.  This bound is based on a similar bound for genuinely complex square random matrices established by the first author \cite{luh2018complex}.  The main results in Section \ref{sec:main} are proven in Section \ref{sec:proof}.  The proofs of Propositions \ref{prop:largest-coord} and \ref{prop:uniform-vector} are presented in Appendix \ref{sec:uniform-vector}.

\subsection{Notation} \label{sec:notation}
We use asymptotic notation (such as $O,o$) under the assumption that $n \to \infty$.  We use $X = O(Y)$, $Y = \Omega(X)$, $X \ll Y$, or $Y \gg X$ to denote the estimate $|X| \leq C Y$ for some constant $C > 0$ independent of $n$ and all $n \geq C$.  If $C$ depends on another parameter, e.g., $C = C_k$, we will indicate this by subscripts, e.g., $X = O_k(Y)$ or $X \ll_k Y$.  We write $X = \Theta(Y)$ if $X \ll Y \ll X$.  We write $X = o(Y)$ if $|X| \leq c_n Y$ for some $c_n$ that tends to zero as $n \to \infty$.  

$|S|$ denotes the cardinality of the finite set $S$.  We use $\sqrt{-1}$ to denote the imaginary unit and reserve $i$ as an index.  $[n]$ denotes the discrete interval $\{1, \ldots, n\}$.  

We use $S_{\mathbb{R}}^{n-1}$ and $S_{\mathbb{C}}^{n-1}$ to denote the unit spheres in $\mathbb{R}^n$ and $\mathbb{C}^n$, respectively.  For a vector $x = (x_i)_{i=1}^n \in \mathbb{C}^n$, we let $\|x\|_2$ denote the $\ell^2$-norm of $x$.  In addition, for $I \subset [n] := \{1, \ldots, n\}$, we let $x_I$ denote the $|I|$-vector $x_I = (x_i)_{i \in I}$.  Similarly, for an $N \times n$ matrix $A = (a_{ij})_{i \in [N], j \in [n]}$ and a subset $J \subset [n]$, we let $A_J$ denote the $N \times |J|$ matrix $A_J = (a_{ij})_{i \in [N], j \in J}$.    We let $\mathcal{M}_{N \times n}^\mathbb{F}$ denote the set of $N \times n$ matrices over the field $\mathbb{F}$ (here, $\mathbb{F}$ is either $\mathbb{R}$ or $\mathbb{C}$).  

Recall that the singular values of a matrix $A$ are the square roots of the eigenvalues of $A^\ast A$.  For an $N \times n$ matrix $A$, we let $s_1(A) \geq \cdots \geq s_n(A)$ denote the ordered singular values of $A$.  Of particular importance are the largest and smallest singular values:
\[ s_1(A) = \max_{\|x\|_2 = 1} \|Ax \|_2, \qquad s_n(A) = \min_{ \|x\|_2 = 1} \|Ax \|_2. \]
We will let $\|A\|$ denote the spectral norm of $A$, i.e., $\|A\| = s_1(A)$.  For convenience, we will often let $s_{\min}(A)$ denote the smallest singular value of $A$.

\section{Outline of the argument}  \label{sec:outline}

\subsection{Reduction of delocalization to invertibility}

For an $n \times n$ matrix $A$, introduce the localization event
\[ \loc(A, m, \delta) = \{ \exists \text{ eigenvector } v \in S^{n-1}_{\mathbb{C}} \text{ of } A, \exists I \subset [n], |I| = m, \|v_I\|_2 < \delta \}. \]
Intuitively, $\loc(A, m, \delta)$ captures the event that $A$ has an eigenvector which has a subset of $m$ coordinates which carry a disproportionately small proportion of the mass.  

We will also extend this notion to rectangular matrices, but first we fix some notation.  If $A$ is a square matrix and $\lambda \in \mathbb{C}$, then $A - \lambda$ denotes the matrix $A - \lambda I$, where $I$ is the identity matrix.  Similarly, if $A$ is a rectangular matrix, we define $A - \lambda$ to be the $N \times n$ matrix with entries $A_{ij} - \lambda \delta_{ij}$, where $\delta_{ij}$ is the Kronecker delta.  

For an $N \times n$ matrix $A$ and $\lambda_0 \in \mathbb{C}$, we define the localization event
\[ \loc_{\lambda_0}(A, M, m, \delta) = \{ \exists v \in S^{n-1}_{\mathbb{C}}, \|(A - \lambda_0) v\|_2 \leq \delta M \sqrt{n}, \exists I \subset [n], |I| = m, \|v_I\|_2 < \delta \}. \]
In the case when $A$ is square, $\loc_{\lambda_0}(A, M, m, \delta)$ is the event that an approximate eigenvector $v$ (with approximate eigenvalue $\lambda_0$) is localized.  For Theorems \ref{thm:nogaps} and \ref{thm:nogaps-small} it is important that we allow this event to also apply to rectangular matrices.  

The following three propositions are based on \cite[Proposition 4.1]{RVnogaps} and show that the study of the localization events defined above can be reduced to a question involving the least singular value of the random matrix $A$.  

\begin{proposition}[Reduction of delocalization to invertibility for approximate eigenvectors] \label{prop:delocgeneral}
Let $A$ be an $N \times n$ random matrix with arbitrary distribution.  Let $M \geq 1$, $\delta \in (0,1/2)$, $p_0 \in (0,1)$, $m \in [n]$, and $\lambda_0 \in \mathbb{C}$ with $|\lambda_0| \leq M \sqrt{n}$.  Assume that for any set $I \subset [n]$ with $|I| = m$, we have
\begin{equation} \label{eq:p_0def1}
	\Prob( s_{\min}( (A - \lambda_0)_{I^c}) \leq 6 \delta M \sqrt{n} \text{ and } \|A \| \leq M \sqrt{n} ) \leq p_0 . 
\end{equation}
Then
\[ \Prob( \loc_{\lambda_0}(A, M, m, \delta) \text{ and } \|A \| \leq M \sqrt{n} ) \leq \left( \frac{n e}{m} \right)^m p_0. \]
\end{proposition}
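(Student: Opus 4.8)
The plan is to reduce the localization event $\loc_{\lambda_0}(A,M,m,\delta)$ to a statement about the least singular value of the sub-matrices $(A-\lambda_0)_{I^c}$ via a union bound over the $\binom{n}{m}$ choices of the coordinate set $I$. First I would fix the intended decomposition: suppose $\loc_{\lambda_0}(A,M,m,\delta)$ and $\|A\|\le M\sqrt n$ both hold, so there is a unit vector $v\in S^{n-1}_{\mathbb C}$ with $\|(A-\lambda_0)v\|_2\le \delta M\sqrt n$ and a set $I\subset[n]$, $|I|=m$, with $\|v_I\|_2<\delta$. Write $v = v_I + v_{I^c}$ (extending by zeros), so $\|v_{I^c}\|_2^2 = 1-\|v_I\|_2^2 > 1-\delta^2$, hence $\|v_{I^c}\|_2 \ge \sqrt{1-\delta^2} \ge 1/2$ since $\delta<1/2$. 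Normalizing, set $w = v_{I^c}/\|v_{I^c}\|_2$, a unit vector supported on $I^c$.

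The key estimate is then to bound $\|(A-\lambda_0)_{I^c} w\|_2$ from above, because $s_{\min}((A-\lambda_0)_{I^c}) \le \|(A-\lambda_0)_{I^c} w\|_2$ for this particular unit vector $w$ (viewing $(A-\lambda_0)_{I^c}$ as the $N\times|I^c|$ matrix acting on the coordinates in $I^c$). Using $(A-\lambda_0)v = (A-\lambda_0)_{I^c} v_{I^c} + (A-\lambda_0)_I v_I$ and the triangle inequality,
\[
\|(A-\lambda_0)_{I^c} v_{I^c}\|_2 \le \|(A-\lambda_0)v\|_2 + \|(A-\lambda_0)_I v_I\|_2 \le \delta M\sqrt n + \|A-\lambda_0\|\,\|v_I\|_2.
\]
Since $\|A-\lambda_0\| \le \|A\| + |\lambda_0| \le M\sqrt n + M\sqrt n = 2M\sqrt n$ and $\|v_I\|_2<\delta$, the right-hand side is at most $\delta M\sqrt n + 2\delta M\sqrt n = 3\delta M\sqrt n$. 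Dividing by $\|v_{I^c}\|_2 \ge 1/2$ gives $\|(A-\lambda_0)_{I^c} w\|_2 \le 6\delta M\sqrt n$, hence $s_{\min}((A-\lambda_0)_{I^c}) \le 6\delta M\sqrt n$. Thus, on the event in question, there exists some $I$ of size $m$ with $s_{\min}((A-\lambda_0)_{I^c}) \le 6\delta M\sqrt n$ and $\|A\|\le M\sqrt n$.

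To finish, apply the union bound over all $\binom{n}{m}$ sets $I$: by hypothesis \eqref{eq:p_0def1}, each has probability at most $p_0$, so
\[
\Prob(\loc_{\lambda_0}(A,M,m,\delta) \text{ and } \|A\|\le M\sqrt n) \le \binom{n}{m} p_0 \le \left(\frac{ne}{m}\right)^m p_0,
\]
using the standard bound $\binom{n}{m}\le (ne/m)^m$. I expect the only mild subtlety to be bookkeeping: making sure the matrix $(A-\lambda_0)_{I^c}$ is interpreted consistently as the sub-matrix obtained by deleting columns in $I$ (so that the Kronecker-delta shift in the definition of $A-\lambda_0$ matches up), and verifying the norm bound $\|A-\lambda_0\|\le 2M\sqrt n$ under the stated constraint $|\lambda_0|\le M\sqrt n$. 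No step is genuinely hard; the content is entirely in the definition of $p_0$ in \eqref{eq:p_0def1}, which has been arranged precisely so that the constant $6$ appears.
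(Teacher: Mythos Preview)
Your proof is correct and follows essentially the same argument as the paper: decompose $v=v_I+v_{I^c}$, use the triangle inequality together with $\|A-\lambda_0\|\le 2M\sqrt n$ to get $\|(A-\lambda_0)_{I^c}v_{I^c}\|_2\le 3\delta M\sqrt n$, divide by $\|v_{I^c}\|_2\ge 1/2$, and apply the union bound with $\binom{n}{m}\le (ne/m)^m$. The only cosmetic difference is that you explicitly normalize to $w$ and spell out the bound $\|A-\lambda_0\|\le 2M\sqrt n$, which the paper leaves implicit.
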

\begin{proof}
Assume $\|A \| \leq M \sqrt{n}$ and the localization event $\loc_{\lambda_0}(A, M, m, \delta)$ holds.  Then there exists a unit vector $v$ and an index set $I \subset [n]$ with $|I| = m$ such that
\begin{equation} \label{eq:deltabnd}
	 \|(A - \lambda_0)v \|_2 \leq \delta M \sqrt{n} \text{ and } \|v_I\|_2 < \delta. 
\end{equation}
We decompose the vector $v$ as $v = v_I + v_{I^c}$ to obtain
\[ (A - \lambda_0)v = (A - \lambda_0)_I v_I + (A- \lambda_0)_{I^c} v_{I^c}. \]
Using \eqref{eq:deltabnd}, we find
\[ \|(A- \lambda_0)_{I^c} v_{I^c} \|_2 \leq 3 \delta M \sqrt{n} \]
and so
\[ s_{\min}((A - \lambda_0)_{I^c}) \|v_{I^c}\|_2 \leq 3 \delta M \sqrt{n}. \]
Since $\|v_I\|_2 < \delta \leq 1/2$, we obtain $\|v_{I^c} \|_2 \geq 1/2$, and hence
\begin{equation} \label{eq:sminrequire}
	s_{\min}((A - \lambda_0)_{I^c}) \leq 6 \delta M \sqrt{n}. 
\end{equation}

In other words, we have shown that the events $\|A \| \leq M \sqrt{n}$ and $\loc_{\lambda_0}(A, M, m, \delta)$ imply the existence of a subset $I \subset [n]$ with $|I| = m$ such that \eqref{eq:sminrequire} holds.  Applying the union bound and \eqref{eq:p_0def1}, we conclude that 
\[ \Prob( \loc_{\lambda_0}(A, m, \delta) \text{ and } \|A \| \leq M \sqrt{n} ) \leq \binom{n}{m} p_0 \leq \left( \frac{n e }{m} \right)^m p_0, \]
as desired.  
\end{proof}

\begin{proposition}[Reduction of delocalization to invertibility for eigenvectors]  \label{prop:deloc}
Let $A$ be an $n \times n$ random matrix with arbitrary distribution.  Let $M \geq 1$, $\delta \in (0,1/2)$, $p_0 \in (0,1)$, and $m \in [n]$.  Assume that for any set $I \subset [n]$ with $|I| = m$ and any $\lambda_0 \in \mathbb{C}$ with $|\lambda_0| \leq M \sqrt{n}$, we have
\begin{equation} \label{eq:deloc:p_0}
	\Prob( s_{\min}( (A - \lambda_0)_{I^c}) \leq 6 \delta M \sqrt{n} \text{ and } \|A \| \leq M \sqrt{n} ) \leq p_0. 
\end{equation}
Then
\[ \Prob( \loc(A, m, \delta) \text{ and }  \|A \| \leq M \sqrt{n} ) \leq \frac{9}{\delta^2} \left( \frac{n e}{m} \right)^m p_0. \]
\end{proposition}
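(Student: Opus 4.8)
The plan is to reduce Proposition~\ref{prop:deloc} to Proposition~\ref{prop:delocgeneral} by passing from exact eigenvectors to approximate ones and by discretizing the eigenvalue. The key observation is that if $v \in S^{n-1}_{\mathbb{C}}$ is an exact eigenvector of $A$ with eigenvalue $\lambda$, then on the event $\|A\| \le M\sqrt n$ we automatically have $|\lambda| \le \|A\| \le M\sqrt n$, so the eigenvalue lives in the disk $D = \{z \in \mathbb{C} : |z| \le M\sqrt n\}$. Moreover $v$ is a genuine approximate eigenvector: $\|(A - \lambda)v\|_2 = 0 \le \delta M \sqrt n$. The difficulty is that $\lambda$ is random, so I cannot directly invoke a hypothesis that is stated for a fixed $\lambda_0$; the remedy is a standard net argument on $D$.

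First I would fix a $\delta M \sqrt n / 2$-net $\mathcal{N}$ of the disk $D$ in $\mathbb{C}$; a volumetric bound gives $|\mathcal N| \le (C M \sqrt n / (\delta M \sqrt n))^2 = (C/\delta)^2$ points, and after adjusting the constant we may take $|\mathcal N| \le 9/\delta^2$ (this is where the prefactor $9/\delta^2$ in the statement comes from — one should check the net can be taken this small, or more honestly absorb the constant; the paper presumably uses a clean count here). Next, suppose $\loc(A, m, \delta)$ and $\|A\| \le M\sqrt n$ both hold, witnessed by an eigenvector $v$, eigenvalue $\lambda$, and set $I$ with $|I| = m$ and $\|v_I\|_2 < \delta$. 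Choose $\lambda_0 \in \mathcal N$ with $|\lambda - \lambda_0| \le \delta M \sqrt n / 2$. Then
\[
\|(A - \lambda_0)v\|_2 \le \|(A-\lambda)v\|_2 + |\lambda - \lambda_0|\,\|v\|_2 \le 0 + \frac{\delta M \sqrt n}{2} \le \delta M \sqrt n,
\]
so the event $\loc_{\lambda_0}(A, M, m, \delta)$ holds for this particular $\lambda_0 \in \mathcal N$. Hence
\[
\loc(A,m,\delta) \cap \{\|A\| \le M\sqrt n\} \subseteq \bigcup_{\lambda_0 \in \mathcal N} \Bigl( \loc_{\lambda_0}(A,M,m,\delta) \cap \{\|A\| \le M\sqrt n\}\Bigr).
\]

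Now I would apply Proposition~\ref{prop:delocgeneral} to each $\lambda_0 \in \mathcal N$. Its hypothesis \eqref{eq:p_0def1} is exactly the assumption \eqref{eq:deloc:p_0} here, which holds for every fixed $\lambda_0$ with $|\lambda_0| \le M\sqrt n$ — and every point of $\mathcal N \subseteq D$ satisfies this. So for each $\lambda_0 \in \mathcal N$,
\[
\Prob\bigl( \loc_{\lambda_0}(A,M,m,\delta) \text{ and } \|A\| \le M\sqrt n \bigr) \le \Bigl(\frac{ne}{m}\Bigr)^m p_0.
\]
A union bound over the at most $9/\delta^2$ points of $\mathcal N$ then yields
\[
\Prob\bigl( \loc(A,m,\delta) \text{ and } \|A\| \le M\sqrt n\bigr) \le \frac{9}{\delta^2}\Bigl(\frac{ne}{m}\Bigr)^m p_0,
\]
which is the claim. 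The only genuinely delicate point is the net: one must ensure that a $(\delta M\sqrt n/2)$-net of a radius-$M\sqrt n$ disk can be taken with cardinality at most $9/\delta^2$ (the ratio of radii is $2/\delta$, and a disk of radius $R$ admits an $\rho$-net of size at most $(1 + 2R/\rho)^2 \le (3R/\rho)^2$ when $R \ge \rho$, giving $(6/\delta)^2$ — so in fact the honest constant is larger, and the clean "$9/\delta^2$" must come either from a sharper planar covering estimate or from redefining constants; I would state the volumetric bound carefully and note that the exponent and the shape of the estimate are what matter, with the constant adjusted accordingly). Everything else is a direct chaining of the triangle inequality and the already-proved Proposition~\ref{prop:delocgeneral}.
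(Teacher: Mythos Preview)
Your approach is exactly the paper's: discretize the eigenvalue by a net on the disk $\{|z|\le M\sqrt n\}$, observe that an exact eigenvector is an approximate one for the nearest net point, and then union-bound Proposition~\ref{prop:delocgeneral} over the net. The only difference is that you take a $(\delta M\sqrt n/2)$-net, which is finer than necessary and is why your constant does not close to $9/\delta^2$; since $\|(A-\lambda)v\|_2=0$ exactly, a $(\delta M\sqrt n)$-net suffices, and the standard bound $(1+2/\delta)^2\le(3/\delta)^2=9/\delta^2$ (valid for $\delta\le 1$) then gives the stated prefactor directly.
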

\begin{proof}
Suppose $\|A \| \leq M \sqrt{n}$ and the localization event $\loc(A, m, \delta)$ holds.  Then there exists an eigenvector $v \in S^{n-1}_{\mathbb{C}}$ of $A$ and an index set $I \subset [n]$ with $|I| = m$ such that $\|v_I \|_2 < \delta$.  Let $\lambda$ be the eigenvalue of $A$ corresponding to the eigenvector $v$.  Then $|\lambda| \leq \|A \| \leq M \sqrt{n}$.  Let $\mathcal{N}$ be a $(\delta M \sqrt{n})$-net of the disc $\{z \in \mathbb{C} : |z| \leq M \sqrt{n} \}$.  A simple volume argument shows that one can construct the net $\mathcal{N}$ to have cardinality 
\begin{equation} \label{eq:Ncardbnd}
	|\mathcal{N}| \leq \frac{9}{\delta^2}. 
\end{equation}
Choose $\lambda_0 \in \mathcal{N}$ such that $|\lambda  - \lambda_0| \leq \delta M \sqrt{n}$.  Then the eigenvalue-eigenvector equation $Av = \lambda v$ implies
\[ (A - \lambda_0) v = (\lambda - \lambda_0) v, \]
and hence
\[ \| (A - \lambda_0)v \|_2 \leq | \lambda - \lambda_0| \leq \delta M \sqrt{n}. \]

To summarize, we have shown that the events $\|A \| \leq M \sqrt{n}$ and $\loc(A, m, \delta)$ imply the existence of $\lambda_0 \in \mathcal{N}$ such that $\loc_{\lambda_0}(A, M, m, \delta)$ holds.  We conclude from Proposition \ref{prop:delocgeneral} and the union bound that
\[ \Prob( \loc(A, m, \delta) \text{ and }  \|A \| \leq M \sqrt{n} ) \leq |\mathcal{N}| \left( \frac{n e}{m} \right)^m p_0. \]
Combining this bound with \eqref{eq:Ncardbnd} completes the proof.  
\end{proof}

To work with the eigenvectors of real matrices, we also require the following event: 
\[ \loc_{\mathbb{R}}(A,m,\delta) := \{ \exists \text{ eigenvector } v \in S_{\mathbb{R}}^{n-1} \text{ of } A, \exists I \subset [n], |I| = m, \|v_I\|_2 < \delta \}. \]
In this case, we have the following analogue of Proposition \ref{prop:deloc}.

\begin{proposition}[Reduction of delocalization to invertibility for real eigenvectors] \label{prop:delocreal}
Let $A$ be an $n \times n$ real random matrix with arbitrary distribution.  Let $M \geq 1$, $\delta \in (0,1/2)$, $p_0 \in (0,1)$, and $m \in [n]$.  Assume that for any set $I \subset [n]$ with $|I| = m$ and any $\lambda_0 \in \mathbb{R}$ with $|\lambda_0| \leq M \sqrt{n}$, we have
\begin{equation} \label{eq:deloc:p_02}
	\Prob( s_{\min}( (A - \lambda_0)_{I^c}) \leq 6 \delta M \sqrt{n} \text{ and } \|A \| \leq M \sqrt{n} ) \leq p_0. 
\end{equation}
Then
\[ \Prob( \loc_{\mathbb{R}}(A, m, \delta) \text{ and }  \|A \| \leq M \sqrt{n} ) \leq \frac{3}{\delta} \left( \frac{n e}{m} \right)^m p_0. \]
\end{proposition}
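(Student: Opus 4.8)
The plan is to follow the proof of Proposition \ref{prop:deloc} verbatim, with the sole modification that the net of the complex disc is replaced by a net of a real interval, which is cheaper: since the eigenvalue attached to a real eigenvector of a real matrix is automatically real, a one-dimensional net suffices and the factor $9/\delta^2$ improves to $3/\delta$. Concretely, first I would assume that both $\|A\| \le M\sqrt{n}$ and $\loc_{\mathbb{R}}(A,m,\delta)$ hold. Then there is an eigenvector $v \in S^{n-1}_{\mathbb{R}}$ of $A$ and a set $I \subset [n]$ with $|I|=m$ and $\|v_I\|_2 < \delta$; let $\lambda$ be the corresponding eigenvalue. Because $A$ and $v$ are real and $v \neq 0$, we have $\lambda = \frac{v^{\top} A v}{v^{\top} v} \in \mathbb{R}$, and $|\lambda| \le \|A\| \le M\sqrt{n}$, so $\lambda$ lies in the real interval $[-M\sqrt{n}, M\sqrt{n}]$.

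Next I would fix a $(\delta M\sqrt{n})$-net $\mathcal{N} \subset \mathbb{R}$ of the interval $[-M\sqrt{n}, M\sqrt{n}]$; since this interval has length $2M\sqrt{n}$ and $\delta < 1/2$, a standard covering argument gives $|\mathcal{N}| \le 3/\delta$. Choose $\lambda_0 \in \mathcal{N}$ with $|\lambda - \lambda_0| \le \delta M\sqrt{n}$. The eigenvalue--eigenvector equation $Av = \lambda v$ then yields $(A - \lambda_0)v = (\lambda - \lambda_0) v$, hence
\[ \|(A - \lambda_0) v\|_2 \le |\lambda - \lambda_0| \le \delta M\sqrt{n}. \]
Since $v \in S^{n-1}_{\mathbb{R}} \subset S^{n-1}_{\mathbb{C}}$, this shows that the event $\loc_{\lambda_0}(A, M, m, \delta)$ holds for this particular real $\lambda_0 \in \mathcal{N}$. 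In other words, on the event $\{\|A\| \le M\sqrt{n}\} \cap \loc_{\mathbb{R}}(A,m,\delta)$ there exists $\lambda_0 \in \mathcal{N}$ with $\loc_{\lambda_0}(A,M,m,\delta)$.

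Finally I would invoke Proposition \ref{prop:delocgeneral}: for each real $\lambda_0 \in \mathcal{N}$ the hypothesis \eqref{eq:p_0def1} of that proposition is exactly the assumed bound \eqref{eq:deloc:p_02}, so Proposition \ref{prop:delocgeneral} gives
\[ \Prob( \loc_{\lambda_0}(A, M, m, \delta) \text{ and } \|A\| \le M\sqrt{n}) \le \left(\frac{ne}{m}\right)^m p_0 \]
for every $\lambda_0 \in \mathcal{N}$. Taking a union bound over the at most $3/\delta$ points of $\mathcal{N}$ then produces
\[ \Prob( \loc_{\mathbb{R}}(A, m, \delta) \text{ and } \|A\| \le M\sqrt{n}) \le \frac{3}{\delta}\left(\frac{ne}{m}\right)^m p_0, \]
which is the claim. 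I do not expect any substantive obstacle: the only points requiring a moment's care are that the eigenvalue of a real eigenvector is genuinely real (so that the interval net, rather than the disc net, applies and the hypothesis is only needed at real shifts, matching \eqref{eq:deloc:p_02}) and the bookkeeping constant $|\mathcal{N}| \le 3/\delta$; everything else is a transcription of the complex-case argument.
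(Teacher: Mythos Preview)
Your proposal is correct and follows essentially the same argument as the paper: assume the localization event, note the eigenvalue is real and bounded, approximate it by a point of a $(\delta M\sqrt n)$-net of the real interval $[-M\sqrt n,M\sqrt n]$ of cardinality at most $3/\delta$, and then combine Proposition~\ref{prop:delocgeneral} with a union bound. Your justification that $\lambda\in\mathbb{R}$ via $\lambda=v^{\top}Av/v^{\top}v$ is in fact slightly more careful than the paper's phrasing, but otherwise the two proofs are line-for-line the same.
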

\begin{proof}
The proof follows a similar argument as the proof of Proposition \ref{prop:deloc}.  Suppose $\|A \| \leq M \sqrt{n}$ and the localization event $\loc_{\mathbb{R}}(A, m, \delta)$ holds.  Then there exists an eigenvector $v \in S^{n-1}_{\mathbb{R}}$ of $A$ and an index set $I \subset [n]$ with $|I| = m$ such that $\|v_I \|_2 < \delta$.  Let $\lambda$ be the eigenvalue of $A$ corresponding to the eigenvector $v$.  Since the matrix $A$ has real entries, the eigenvalue $\lambda$ must be real.  In addition, $|\lambda| \leq \|A \| \leq M \sqrt{n}$.  Let $\mathcal{N}$ be a $(\delta M \sqrt{n})$-net of the real interval $[-M\sqrt{n}, M \sqrt{n}]$.  A simple volume argument shows that one can construct the net $\mathcal{N}$ to have cardinality 
\begin{equation} \label{eq:Ncardbnd2}
	|\mathcal{N}| \leq \frac{3}{\delta}. 
\end{equation}
Choose $\lambda_0 \in \mathcal{N}$ such that $|\lambda  - \lambda_0| \leq \delta M \sqrt{n}$.  Then the eigenvalue-eigenvector equation $Av = \lambda v$ implies
\[ (A - \lambda_0) v = (\lambda - \lambda_0) v, \]
and hence
\[ \| (A - \lambda_0)v \|_2 \leq | \lambda - \lambda_0| \leq \delta M \sqrt{n}. \]

To summarize, we have shown that the events $\|A \| \leq M \sqrt{n}$ and $\loc_{\mathbb{R}}(A, m, \delta)$ imply the existence of $\lambda_0 \in \mathcal{N}$ such that $\loc_{\lambda_0}(A, M, m, \delta)$ holds.  We conclude from Proposition \ref{prop:delocgeneral} and the union bound that
\[ \Prob( \loc(A, m, \delta) \text{ and }  \|A \| \leq M \sqrt{n} ) \leq |\mathcal{N}| \left( \frac{n e}{m} \right)^m p_0. \]
Combining this bound with \eqref{eq:Ncardbnd2} completes the proof.  
\end{proof}

\subsection{Least singular value of rectangular matrices}
In order to apply Propositions \ref{prop:delocgeneral}, \ref{prop:deloc}, and \ref{prop:delocreal}, we will need bounds on the least singular value of genuinely complex random matrices.  These bounds are the key technical achievement of this paper.  Indeed, the results below provide an analogue of the main result in \cite{RVRectangle} for genuinely complex random matrices.  
\begin{theorem}             \label{thm:rectangular}
  Let $A$ be an $N \times n$ random genuinely complex matrix, $N \ge n$, and $\lambda \in \C$ with $|\lambda| \leq M \sqrt{N}$ for $M \geq 1$.
  Then, for every $\e > 0$, we have
  \begin{equation}\label{eq rectangular subgaussian}
    \P \Big( s_n(A - \lambda) \le \e \big (\sqrt{N} - \sqrt{n-1} \big ) \Big)
    \le (C \e)^{2(N-n+1)-1} + e^{-cN}
  \end{equation}
  where $C, c > 0$ depend (polynomially)
  only on the subgaussian moment $B$ and $M$.
\end{theorem}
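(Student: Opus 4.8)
The plan is to adapt the rectangular least-singular-value argument of \cite{RVRectangle} to the genuinely complex setting, feeding in the complex small-ball estimates developed in \cite{luh2018complex}. The first step is to dispose of the operator norm: since the entries are subgaussian, a standard $\e$-net bound gives $\|A\| \le C_0\sqrt N$ off an event of probability $e^{-cN}$, and because $|\l| \le M\sqrt N$ this yields $\|A-\l\| \le C_0'M\sqrt N$ on the same good event, the exceptional probability being absorbed into the $e^{-cN}$ term of \eqref{eq rectangular subgaussian}. On the good event, write $s_n(A-\l)=\min_{x\in S_{\mathbb C}^{n-1}}\|(A-\l)x\|_2$ and decompose $S_{\mathbb C}^{n-1}$ into the set $\Comp$ of compressible unit vectors (those within distance $\rho$ of a vector supported on at most $c_0 n$ coordinates) and its complement $\Incomp$, where $\rho$ and $c_0$ are constants depending only on $B$.

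The compressible part is routine. If a compressible $x$ is essentially supported on a set $S$ with $|S|\le c_0 n\le c_0 N$, then at least $N-c_0 N$ rows of $A-\l$ see no shift, so $\|(A-\l)x\|_2$ is bounded below by a constant times the least singular value of the genuinely complex submatrix of $A$ obtained by keeping the columns in $S$ and those rows. By the usual net-plus-tensorization estimate (each independent row contributing, because the entries are genuinely complex, a quadratic factor to the relevant L\'evy concentration function) this submatrix has least singular value $\gtrsim \sqrt N$ with probability at least $1-e^{-cN}$, uniformly over $S$. Hence, off an event of probability $e^{-cN}$, $\min_{x\in\Comp}\|(A-\l)x\|_2\ge c_1\sqrt N$, which exceeds $\e(\sqrt N-\sqrt{n-1})$ once $\e$ is below an absolute constant; for larger $\e$ the bound \eqref{eq rectangular subgaussian} is vacuous, so in the remaining range any unit vector witnessing $s_n(A-\l)\le \e(\sqrt N-\sqrt{n-1})$ must lie in $\Incomp$.

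The incompressible part is the heart of the matter and the step I expect to be the main obstacle. Following \cite{RVRectangle}, the invertibility-via-distance lemma shows that if some $x\in\Incomp$ satisfies $\|(A-\l)x\|_2\le \e(\sqrt N-\sqrt{n-1})$, then the distance from a column of $A-\l$ to the span $H$ of the remaining $n-1$ columns --- a subspace of codimension $d:=N-n+1$ that is independent of that column --- is a small multiple of its typical size $\asymp\sqrt d$, and this happens for a positive proportion of the columns; a first-moment (Markov) argument over the columns, using a bound on this probability that is uniform in the choice of column, reduces the entire incompressible contribution to a single small-ball estimate, namely
\[ \sup_{w\in\C^N}\ \P\big(\|PX-w\|_2\le \e\sqrt d\big)\ \le\ (C\e)^{2d-1}+e^{-cN}, \]
taken uniformly over orthogonal projections $P$ of $\C^N$ onto $d$-dimensional complex subspaces, where $X\in\C^N$ is genuinely complex; here $P$ plays the role of the projection onto $H^\perp$ and $w$ that of the deterministic shift produced by $-\l$, and the elementary identity $(\sqrt N-\sqrt{n-1})(\sqrt N+\sqrt{n-1})=d$ enters the scale bookkeeping of the reduction. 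This displayed estimate is the rectangular, genuinely complex generalization of the key bound of \cite{luh2018complex}, and I would prove it by the least-common-denominator method: in real coordinates, $PX$ is the image of $2N$ independent real subgaussian variables under a $2d\times 2N$ matrix whose rows are, for a typical $P$ (hence, after removing an event of probability $e^{-cN}$, for the realized one), incompressible and of large $\LCD$; a Berry-Esseen-type inversion then bounds the concentration function of $PX$ at scale $\e\sqrt d$ by $(C\e)^{2d}$ on the large-$\LCD$ range, the complementary range is handled by a separate net argument, and the passage through the fixed shift $w$ consumes one of the $2d$ real directions, which is the source of the exponent $2d-1$ (and of the fact that, even when $N=n$, this estimate only recovers the real bound). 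Assembling the compressible and incompressible bounds and rescaling $\e$ by absolute constants yields \eqref{eq rectangular subgaussian}; the polynomial dependence of $C,c$ on $B$ and $M$ is inherited from the quantitative forms of these ingredients and of the operator norm bound.
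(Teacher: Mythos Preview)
Your overall scheme---operator-norm bound, compressible/incompressible split, treatment of compressible vectors---matches the paper. The gap is in the incompressible step: the single-column first-moment reduction you invoke produces the wrong scale and cannot deliver the exponent $2d-1$. Concretely, if $x\in\Incomp$ with $\|(A-\l)x\|_2<\tau$, then for the spread indices $k$ (those with $|x_k|\asymp 1/\sqrt n$) one has $\dist(X_k,H_k)\le \tau/|x_k|\asymp \tau\sqrt n$. With $\tau=\e(\sqrt N-\sqrt{n-1})\asymp \e d/\sqrt n$ the resulting distance threshold is of order $\e d$, \emph{not} $\e\sqrt d$. Feeding this into the complex distance-to-random-subspace bound $\P(\dist(X,H)<t\sqrt d)\le (Ct)^{2d}+e^{-cN}$ with $t=\e\sqrt d$ yields only $(C\e\sqrt d)^{2d}+e^{-cN}$. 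This is precisely the bound of Theorem~\ref{thm:nearlysquare}, and the paper explicitly uses your single-column route there, remarking that the ``more direct approach suffices'' only for the nearly-square regime; for general $d$ it is off from $(C\e)^{2d-1}$ by a factor of order $d^d$. (Your explanation that the shift $w$ ``consumes one of the $2d$ real directions'' and accounts for the $-1$ is also incorrect; the shift costs nothing.)

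To recover the right scale the paper follows the full \cite{RVRectangle} machinery rather than the single-column lemma. One picks a random $d$-subset $J\subset[n]$ and proves the reduction (Lemma~\ref{lem:inverttodistance})
\[
\P\Big(\inf_{x\in\Incomp}\|(A-\l)x\|_2<c_1\e\sqrt{d/n}\Big)\le C_1^d\,\P\Big(\inf_{z\in\Spread_J}\dist((A-\l)z,H_{J^c})<\e\Big),
\]
where $\Spread_J\subset S(\C^J)$ are totally spread unit vectors. The extra $\sqrt d$ on the left-hand scale comes from $\|P_Jx\|_2\asymp\sqrt{d/n}$ rather than $|x_k|\asymp 1/\sqrt n$, and is exactly the factor your argument loses. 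The infimum over the $(2d{-}1)$-real-dimensional set $\Spread_J$ is then controlled (Theorem~\ref{thm:uniformdistance}) by a net-and-decoupling argument together with the complex distance theorem; the exponent $2d-1$ arises from the dimension of this net, not from any effect of the shift.
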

\begin{remark}
Note that in \cite{RVRectangle}, the upperbound in (\ref{eq rectangular subgaussian}) for real random matrices and $\lambda = 0$ is of the form $(C \e)^{N -n+1} + e^{-cN}$.  Essentially, we have replaced this $\e$ in the real case with $\e^2$ in the genuinely complex case.  The right hand side is near optimal up to a factor of $\eps$.  
\end{remark}

By slightly altering the proof of Theorem \ref{thm:rectangular}, we are able to prove a bound that is more effective in the regime where $N -n$ is small.  
\begin{theorem}             \label{thm:nearlysquare}
  Let $A$ be an $N \times n$ random genuinely complex matrix, $N \ge n \ge N - T$, for some integer $T$. Consider $\lambda \in \C$ with $|\lambda| \leq M \sqrt{N}$ for $M \geq 1$.
  Then, for every $\e > 0$, we have
  \begin{equation}\label{eq nearlysquare}
    \P \Big( s_n(A - \lambda) \le \e \big (\sqrt{N} - \sqrt{n-1} \big ) \Big)
    \le (C \sqrt{T} \e)^{2(N-n+1)} + e^{-cN}
  \end{equation}
  where $C, c > 0$ depend (polynomially)
  only on the subgaussian moment $B$ and $M$.
\end{theorem}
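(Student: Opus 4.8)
The plan is to run the proof of Theorem~\ref{thm:rectangular} essentially verbatim, changing only the one estimate responsible for the missing power of $\e$ in \eqref{eq rectangular subgaussian}. Recall the shape of that argument, which follows \cite{RVRectangle} in the real case and uses the genuinely complex anti-concentration inputs of \cite{luh2018complex}: after discarding the exponentially unlikely event $\{\|A-\lambda\|>CM\sqrt N\}$ (using $|\lambda|\le M\sqrt N$ and subgaussianity, which produces the $e^{-cN}$ term), one splits $S^{n-1}_{\C}$ into compressible and incompressible vectors; the infimum of $\|(A-\lambda)x\|_2$ over compressible $x$ is handled by a net argument costing another $e^{-cN}$; and for incompressible $x$ one passes, after selecting a coordinate block $J$ of size $n':=N-n+1$ on which $x$ is well spread and splitting the columns of $A-\lambda$ accordingly as $[\,B\mid C\,]$ (with $C$ the columns in $J$; note $|J^c|=n-n'\ge 0$ precisely in the range $N\le 2n-1$, which is the only interesting one here), to the bound
\[
  \|(A-\lambda)x\|_2 \;\ge\; \dist\bigl(Cx_J,\ \range B\bigr)\;=\;\|\Pi\, Cx_J\|_2\;\ge\; s_{n'}\!\bigl(\Pi C\bigr)\,\|x_J\|_2,
\]
where $\Pi$ is the orthogonal projection onto $(\range B)^{\perp}$ (a complex subspace of dimension at least $2n'-1$, independent of $C$) and $\|x_J\|_2\gtrsim\sqrt{n'/n}$ by incompressibility. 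In an orthonormal basis of $\range\Pi$ the matrix $\Pi C$ is, distributionally, an $N'\times n'$ genuinely complex matrix with $N'\ge 2n'-1$, and the least singular value bound invoked in Theorem~\ref{thm:rectangular} for it (which proceeds by recursion) costs one power of $\e$ in the complex case and produces the exponent $2n'-1$.

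For the nearly square regime I would instead bound $s_{n'}(\Pi C)$ by the cruder route, which keeps the full exponent $2n'$. Writing $G:=\Pi C$ and using the negative second moment identity $\|G^{+}\|_{\HS}^{2}=\sum_{j=1}^{n'}\dist\bigl(G^{(j)},\Span\{G^{(k)}:k\neq j\}\bigr)^{-2}$ for a full-column-rank matrix, one gets $s_{n'}(G)\ge (n')^{-1/2}\min_{1\le j\le n'}\dist\bigl(G^{(j)},\Span\{G^{(k)}:k\neq j\}\bigr)$. For each $j$, conditionally on the other columns this distance equals $\|\Pi_j C^{(j)}\|_2$ with $\Pi_j$ a fixed orthogonal projection of rank at least $N'-(n'-1)\ge n'$ and $C^{(j)}$ a genuinely complex iid vector; by the tensorization lemma together with the per-coordinate estimate that any coordinate of $\Pi_j C^{(j)}$ in an orthonormal basis of $\range\Pi_j$ has L\'evy concentration function of order at most $\e^2$ at scale $\e$ (this is where the genuine complexity of the entries — two independent mean-zero, unit-variance, subgaussian real parts — is essential, and where the estimates of \cite{luh2018complex} enter), one obtains $\P\bigl(\|\Pi_j C^{(j)}\|_2\le \e\sqrt{n'}\bigr)\le (C\e)^{2n'}+e^{-cN'}$. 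A union bound over the $n'$ columns then gives $s_{n'}(G)\ge c\,\e$ off an event of probability at most $(C\e)^{2n'}+e^{-cN}$; the leftover factor $n'$ is harmless because $(n')^{1/(2n')}$ is bounded by an absolute constant for all $n'\ge 1$, so it is absorbed into $C$ — and this is exactly where the small size of $N-n$ enters, since the analogous union bound over all $n$ columns of $A-\lambda$ would instead contribute a factor $n$ that cannot be absorbed.

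Feeding $s_{n'}(G)\ge c\,\e$ back into the incompressible bound gives $\|(A-\lambda)x\|_2\ge c\,\e\,\sqrt{n'/n}$ for every incompressible $x$; since $\sqrt N-\sqrt{n-1}=\dfrac{n'}{\sqrt N+\sqrt{n-1}}\asymp \dfrac{n'}{\sqrt n}$ in the relevant regime, this is $\gtrsim (\e/\sqrt{n'})\,(\sqrt N-\sqrt{n-1})$, and reparametrising $\e\mapsto \e\sqrt{n'}$ turns the failure probability $(C\e)^{2n'}$ into $(C\sqrt{n'}\,\e)^{2n'}$. Since $n'=N-n+1\le T+1$, this is at most $(C'\sqrt{T}\,\e)^{2(N-n+1)}$, which together with the two $e^{-cN}$ contributions is \eqref{eq nearlysquare}. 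Everything else — the reduction of $\lambda$ by conditioning on all columns but one and using translation invariance of the L\'evy concentration function, the control of $\|A\|$, and all of the compressible and net bookkeeping of \cite{RVRectangle} — is unchanged; only the input estimate for the auxiliary matrix has been swapped.

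The step I expect to require the most care is checking that this swap is consistent with the combinatorial bookkeeping of the incompressible reduction: one must verify that the cost of choosing the block $J$ and of discretising the incompressible vectors continues to be absorbed by $(C\sqrt{T}\,\e)^{2(N-n+1)}+e^{-cN}$ without inflating the exponent, which (as the remark preceding the statement indicates) is what confines the bound to be genuinely useful only when $N-n$, equivalently $T$, is small. A secondary point is dimensional: the subspace $(\range B)^\perp$ and the residual subspaces appearing in the distances must be genuine \emph{complex} subspaces of the asserted dimensions, so that the tensorization lemma is applied to $n'$ genuinely complex coordinates rather than to $2n'$ real ones — this is precisely what holds the exponent at $2(N-n+1)$ instead of degrading it to $N-n+1$.
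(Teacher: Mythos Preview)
Your overall strategy --- replace the uniform-distance bound (Theorem~\ref{thm:uniformdistance}) that costs a power of $\eps$ by a column-distance argument that keeps the full exponent $2(N-n+1)$ --- is the right idea, but both the route and one key step differ from the paper.

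The paper's proof is considerably shorter: it dispenses entirely with the block-$J$/totally-spread machinery of Lemmas~\ref{lem:totalspread} and~\ref{lem:inverttodistance} and instead invokes the \emph{square-case} reduction, Lemma~\ref{lem:invertbydistance} (from \cite{RVLittlewoodOfford}), which bounds $\P(\inf_{\Incomp}\|(A-\lambda)x\|_2<\eps\rho n^{-1/2})$ by an \emph{average} $\frac{1}{\delta n}\sum_k\P(\dist(X_k,H_k)<\eps)$ of column-to-hyperplane distances over all $n$ columns. Theorem~\ref{thm:distance} then gives each term the bound $(C\eps/\sqrt m)^{2m}+e^{-cN}$ with $m=N-n+1\le T+1$, and rescaling yields $(C\sqrt T\,\eps)^{2(N-n+1)}$. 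Your detour through the block $J$ and the negative-second-moment identity ultimately lands on the same distance problem $\dist(A_j,\Span\{A_k:k\ne j\})$, just with extra overhead; and your remark that a union bound over all $n$ columns ``would contribute a factor $n$ that cannot be absorbed'' overlooks that Lemma~\ref{lem:invertbydistance} \emph{averages} rather than unions, so the prefactor is only the constant $1/\delta$.

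There is also a genuine gap. The step ``by the tensorization lemma together with the per-coordinate estimate \dots\ one obtains $\P(\|\Pi_jC^{(j)}\|_2\le\eps\sqrt{n'})\le(C\eps)^{2n'}+e^{-cN'}$'' does not work as stated: the coordinates of $\Pi_jC^{(j)}$ in an orthonormal basis of $\range\Pi_j$ are \emph{not} independent (each is a linear combination of the same iid entries of $C^{(j)}$), so the tensorization lemma does not apply to them. The correct tool is Theorem~\ref{thm:distance} (via Theorem~\ref{thm:distancegeneral}), and its proof is not elementary --- it passes through the LCD of the random subspace $H^\perp$ and the structure theorem~\ref{thm:structure}, which is also where the ambient-dimension error term $e^{-cN}$ (not your $e^{-cN'}$ with $N'=2n'-1$) actually comes from. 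Two smaller inaccuracies worth flagging: $\Pi C$ is not ``distributionally an $N'\times n'$ genuinely complex matrix'' (after projection the entries are neither independent nor identically distributed; only the columns remain conditionally independent), and the proof of Theorem~\ref{thm:rectangular} does not ``proceed by recursion'' --- the missing power of $\eps$ there comes from the $\eps$-net in Proposition~\ref{prop:uniformdistWsmall}.
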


\begin{remark}
Note that when $T = O(1)$, we recover the optimal bound.
\end{remark}

\section{Proof of Theorem \ref{thm:rectangular}} \label{sec:proof:rectangular}
Our proof follows \cite{RVRectangle} closely and also combines several ideas from \cite{luh2018complex}.  We mirror the notation from \cite{RVRectangle, luh2018complex} for ease of comparison.
\subsection{Preliminaries}
\subsubsection{Nets}
Consider a subset $D$ of $\C^n$, and let $\e > 0$.
Recall that an $\e$-net of $D$ is a subset $\NN \subseteq D$ such that
for every $x \in D$ one has $\dist(x,\NN) \le \e$.

The following lemma is the complex analogue of Proposition 2.1 in \cite{RVRectangle}.  The proof is identical to the real case if one identifies $\C^n$ with $\R^{2n}$.

\begin{proposition}[Nets]                     \label{prop:nets}
  Let $S$ be a subset of $S_{\C}^{n-1}$, and let $\e > 0$.
  Then there exists an $\e$-net of $S$ of
  cardinality at most
  $$
  4n \Big( 1 + \frac{2}{\e} \Big)^{2n-1}.
  $$
\end{proposition}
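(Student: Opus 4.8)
The plan is to reduce the statement to a purely real, finite-dimensional packing estimate. Identify $\C^n$ with $\R^{2n}$ by recording the real and imaginary part of each coordinate; this is a linear isometry of $(\C^n,\|\cdot\|_2)$ onto $(\R^{2n},\|\cdot\|_2)$ that carries $S_{\C}^{n-1}$ bijectively onto the real unit sphere $S^{2n-1}\subset\R^{2n}$. So it suffices to prove the real statement: every subset $S$ of the unit sphere $S^{d-1}$ in $\R^d$ admits an $\e$-net of cardinality at most $2d\,(1+2/\e)^{d-1}$, and then specialize to $d=2n$. This is exactly Proposition~2.1 of \cite{RVRectangle}; the argument below reproduces it.

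First I would dispose of the degenerate range $\e\ge 2$: since $\diam(S)\le 2\le\e$, a single point of $S$ is an $\e$-net, and $1\le 2d\le 2d\,(1+2/\e)^{d-1}$, so the bound holds trivially. Assume now $0<\e<2$, and let $\mathcal{N}\subseteq S$ be a maximal $\e$-separated subset of $S$ in the Euclidean metric. By maximality, every point of $S$ lies within distance $\e$ of some point of $\mathcal{N}$, so $\mathcal{N}$ is an $\e$-net of $S$; note also that it lies inside $S$, which is what is asserted.

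It remains to bound $|\mathcal{N}|$. Because $\mathcal{N}$ is $\e$-separated, the open balls $B(x,\e/2)$, $x\in\mathcal{N}$, are pairwise disjoint; and since $\|x\|_2=1$ for each center and $\e/2<1$, each such ball is contained in the annulus $\{\,y\in\R^d:1-\e/2\le\|y\|_2\le 1+\e/2\,\}$. Writing $\omega_d$ for the volume of the unit Euclidean ball in $\R^d$ and comparing volumes,
\[ |\mathcal{N}|\,\omega_d\,(\e/2)^d \;\le\; \omega_d\left[(1+\e/2)^d-(1-\e/2)^d\right]. \]
By the mean value theorem applied to $t\mapsto t^d$ on $[1-\e/2,\,1+\e/2]$, the right-hand bracket is at most $d\,\e\,(1+\e/2)^{d-1}$; dividing through by $\omega_d(\e/2)^d$ gives $|\mathcal{N}|\le 2d\,(1+2/\e)^{d-1}$. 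Setting $d=2n$ yields the claimed bound $4n\,(1+2/\e)^{2n-1}$.

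I do not anticipate a genuine obstacle: this is the standard volumetric net bound, and the sharper exponent $2n-1$ (rather than $2n$) comes solely from confining the small balls to the thin annulus about the sphere rather than to the full ball of radius $1+\e/2$. The only points demanding a little care are that the claimed bound must be checked to be non-vacuous (handled by the $\e\ge 2$ case), that a maximal $\e$-separated subset genuinely lies inside $S$ (so that the net is a net \emph{of} $S$, not merely of the ambient space), and the elementary inequality $(1+t)^d-(1-t)^d\le 2dt\,(1+t)^{d-1}$.
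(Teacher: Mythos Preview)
Your proposal is correct and follows exactly the approach indicated in the paper: identify $\C^n$ with $\R^{2n}$ isometrically and invoke the real statement (Proposition~2.1 of \cite{RVRectangle}), which you have faithfully reproduced via the standard annulus volume comparison. The paper omits the details and simply refers to the real case, so your write-up is in fact more explicit than what appears there.
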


Using the standard net argument, one can show the following bound on the operator norm of rectangular matrices.
\begin{proposition} \label{prop:opnorm}
Let $A$ be an $N \times n$ genuinely complex random matrix, with $N \geq n$ and $\lambda \in \C$ with $|\lambda| \leq M \sqrt{N}$ for $M \geq 1$.  Then
$$
\P\big( \|A - \lambda \| > t \sqrt{N} \big) \leq e^{-c_0 t^2 N} \qquad \text{for } t \ge C_0,
$$
where $C_0, c_0 > 0$ depend only on the subgaussian moment $B$ and $M$.
\end{proposition}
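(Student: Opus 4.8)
The plan is to use a standard $\e$-net argument on the sphere $S_{\C}^{n-1}$ together with the observation that for each fixed unit vector $x$, the vector $(A - \lambda) x$ has independent coordinates (with a deterministic shift coming from $\lambda$), so that $\|(A-\lambda)x\|_2^2$ concentrates. Write $B := A - \lambda$. Since the entries of $A$ are genuinely complex with independent real and imaginary parts of mean zero, unit variance, and subgaussian moment bounded by $B$, each entry $a_{ij}$ is itself a complex random variable with mean zero and a subgaussian tail: there is an absolute constant so that $\Prob(|a_{ij}| > s) \le 4 e^{-c_1 s^2}$ for all $s > 0$. Hence for a fixed $x \in S_{\C}^{n-1}$, each coordinate $(Ax)_k = \sum_j a_{kj} x_j$ is a sum of independent mean-zero subgaussian variables, and a Hoeffding-type bound gives $\Prob(|(Ax)_k| > s) \le 4 e^{-c_2 s^2}$, uniformly in $k$ and in $x$. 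Summing over the $N$ coordinates and using the shift $|\lambda| \le M\sqrt N$, one gets a bound of the form $\Prob(\|(A - \lambda)x\|_2 > t\sqrt N) \le e^{-c_3 t^2 N}$ for $t$ larger than some constant depending on $B$ and $M$; this is where the restriction $t \ge C_0$ enters, since we need the constant shift $M\sqrt N$ and the $O(\sqrt N)$ typical size of $\|Ax\|_2$ to be absorbed.

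Next I would take a $(1/2)$-net $\NN$ of $S_{\C}^{n-1}$; by Proposition \ref{prop:nets} its cardinality is at most $4n \cdot 5^{2n-1} \le e^{C_4 n}$. By the union bound,
\[
\Prob\Bigl( \max_{x \in \NN} \|(A-\lambda)x\|_2 > t\sqrt N \Bigr) \le e^{C_4 n} \cdot e^{-c_3 t^2 N} \le e^{-c_3 t^2 N / 2}
\]
for $t \ge C_0$, using $N \ge n$ and taking $C_0$ large enough (depending on $B$, $M$) that $c_3 C_0^2 / 2 \ge C_4$. Finally, the standard approximation lemma converts a bound on the net into a bound on the whole sphere: if $\|(A-\lambda)x\|_2 \le t\sqrt N$ for all $x$ in a $(1/2)$-net, then $\|A - \lambda\| = \sup_{x \in S_{\C}^{n-1}} \|(A-\lambda)x\|_2 \le 2 t \sqrt N$. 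Rescaling $t$ by a factor of $2$ (which only changes the constants $C_0, c_0$) gives the claimed estimate $\Prob(\|A-\lambda\| > t\sqrt N) \le e^{-c_0 t^2 N}$ for $t \ge C_0$.

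The only mildly delicate point — and the one I would be most careful about — is making the scalar subgaussian concentration for $\|(A-\lambda)x\|_2$ uniform over all $x \in S_{\C}^{n-1}$ with constants depending only on $B$ and $M$, and in particular handling the deterministic contribution of $\lambda$ (both on the diagonal and, when $n < N$, the rectangular shape of the Kronecker-delta shift) so that it is dominated once $t \ge C_0$. Everything else is routine: the net cardinality bound is quoted, the union bound is immediate, and the net-to-sphere passage is the usual argument. I would therefore present the entry-wise subgaussian tail and the single-vector concentration bound as one short lemma, then assemble the three ingredients in a couple of lines.
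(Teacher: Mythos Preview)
Your proposal is correct and is exactly what the paper has in mind: the paper does not actually prove Proposition~\ref{prop:opnorm}, stating only that ``using the standard net argument, one can show the following bound,'' and your sketch is precisely that standard net argument. One minor remark: the cleanest way to get the single-vector bound is often to net both $x \in S_{\C}^{n-1}$ and $y \in S_{\C}^{N-1}$ and use that $\langle y, Ax\rangle$ is a scalar subgaussian; this avoids the sub-exponential/Bernstein step implicit in ``summing over the $N$ coordinates,'' and makes the handling of the deterministic shift $\lambda$ transparent via $\|A-\lambda\|\le \|A\|+|\lambda|\le \|A\|+M\sqrt N$.
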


\subsubsection{Converting between $\R$ and $\C$}
Following \cite{luh2018complex}, for a vector $v \in \C^n$, we denote by $\underline{v}$ its associated real vector defined to be
$$
\underline{v} := \left(\begin{array}{c}
\RR(v) \\
\II(v) 
\end{array} \right)
$$ 
and $[v]$ denote its associated $2n \times 2$ real matrix defined to be
$$
[v] := \left(\begin{array}{cc}
\RR(v) & -\II (v) \\
\II(v) & \RR (v)
\end{array} \right)
$$ 
We generalize this notion from \cite{luh2018complex} to include matrices.  For a $M \in \mathcal{M}^{\C}_{n \times m}$ matrix with
$$
M = A + i B,
$$
where $A, B \in \mathcal{M}^{\R}_{n \times m}$, we define $[M] \in \mathcal{M}^{\R}_{2n \times 2m}$ to be 
$$
[M] := \left(\begin{array}{cc}
A & -B \\
B & A
\end{array} \right).
$$ 
We record some useful consequences of these definitions below.
\begin{lemma} \label{lem:RtoC}
For $a \in \C$, $x, y \in \C^n$ and $M \in  \mathcal{M}^\C_{m \times n}$,
$$
\|x - y\|_2  = \|\underline{x} - \underline{y}\|_2,
$$
$$
\underline{a x} = [x] \underline{a},
$$
$$
\underline{M x} = [M] \underline{x}. 
$$
\end{lemma}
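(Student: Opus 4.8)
All three identities are verified by direct computation from the definitions, decomposing each complex object into its real and imaginary parts. Write $x = u + \sqrt{-1}\, w$ with $u = \RR(x)$, $w = \II(x)$, and similarly for the other quantities.

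For the first identity, note that $\underline{\cdot}$ is $\R$-linear in the obvious sense, so $\underline{x - y} = \underline{x} - \underline{y}$; hence it suffices to check $\|z\|_2 = \|\underline{z}\|_2$ for a single vector $z = u + \sqrt{-1}\, w \in \C^n$. This follows from
\[
  \|\underline{z}\|_2^2 = \sum_{j=1}^n \big( u_j^2 + w_j^2 \big) = \sum_{j=1}^n |z_j|^2 = \|z\|_2^2.
\]
For the second identity, write $a = a_1 + \sqrt{-1}\, a_2$ and $x = u + \sqrt{-1}\, w$. Then $ax = (a_1 u - a_2 w) + \sqrt{-1}\,(a_1 w + a_2 u)$, so $\underline{ax}$ has top block $a_1 u - a_2 w$ and bottom block $a_2 u + a_1 w$. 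On the other hand, multiplying the block matrix $[x] = \left(\begin{smallmatrix} u & -w \\ w & u \end{smallmatrix}\right)$ against $\underline{a} = (a_1, a_2)^{\mathsf T}$ produces exactly these two blocks, which gives $\underline{ax} = [x]\,\underline{a}$. The third identity is the same computation with a matrix in place of a scalar: writing $M = A + \sqrt{-1}\, B$ with $A, B$ real, one has $Mx = (Au - Bw) + \sqrt{-1}\,(Aw + Bu)$, whose real and imaginary parts are precisely the two blocks of $[M]\,\underline{x}$ for $[M] = \left(\begin{smallmatrix} A & -B \\ B & A \end{smallmatrix}\right)$.

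There is no genuine obstacle here; the content is entirely bookkeeping, and the only point requiring a modicum of care is keeping the ordering of the real/imaginary blocks consistent between the definitions of $\underline{v}$, $[v]$, and $[M]$ so that the block-matrix products line up correctly. In particular the second identity is the special case $m = 1$ (respectively $n = 1$ after transposition) of the third, so once the matrix version is established the scalar version follows, and the first identity is independent and immediate.
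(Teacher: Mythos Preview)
Your proposal is correct; each of the three identities follows by exactly the block-by-block computation you outline, and your remark that the second identity is the $n\times 1$ instance of the third is accurate. The paper itself does not supply a proof of this lemma at all---it simply records the identities as immediate consequences of the definitions---so your direct verification is precisely what the authors are implicitly leaving to the reader.
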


\subsubsection{Decomposition of the unit sphere}
In our proof of Theorem \ref{thm:rectangular}, we utilize a partition of the unit sphere due to Rudelson and Vershynin \cite{RVLittlewoodOfford}.
\begin{definition}
Let $\d, \rho \in (0,1)$.  A vector $x \in \C^n$ is \emph{sparse} if $|\supp(x)| \leq \d n$.  A vector $x \in \C^n$ is \emph{compressible} if there exists a sparse vector $y$ such that $\|x - y\|_2 \leq \rho$.  A vector $x \in \C^n$ is \emph{incompressible} if it is not compressible.  We denote the sets of compressible and incompressible vectors by $\Comp(\d, \rho)$ and $\Incomp(\d, \rho)$ respectively. 
\end{definition}

We now recall two simple results.  
\begin{lemma}[Lemma 5.3, \cite{luh2018complex}] \label{lem:spreadset}
Let $x \in Incomp(\d, \rho)$.  Then there exists a set $\sigma \subseteq [n]$ of cardinality $|\sigma| \geq \nu_1 n$ and such that
$$
\frac{\nu_2}{\sqrt{n}} \leq |x_k| \leq \frac{\nu_3}{\sqrt{n}} \qquad \qquad \text{for all } k \in \sigma
$$
where $0 < \nu_1, \nu_2, \nu_3$ are constants depending only on $\d$ and $\rho$.  
\end{lemma}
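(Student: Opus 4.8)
The plan is to exploit the definition of incompressibility to show that an incompressible vector cannot have too many coordinates that are either very small or very large, which then forces a positive-proportion ``spread'' set on which all coordinates are of order $1/\sqrt n$. First I would fix $x \in \Incomp(\d, \rho)$ with $\|x\|_2 = 1$ (the statement is scale-invariant on the sphere), and for parameters $\nu_2 < \nu_3$ to be chosen, define the three index sets
\[
\sigma_{\mathrm{small}} = \{ k : |x_k| < \nu_2/\sqrt n \}, \quad \sigma_{\mathrm{large}} = \{ k : |x_k| > \nu_3/\sqrt n \}, \quad \sigma = [n] \setminus (\sigma_{\mathrm{small}} \cup \sigma_{\mathrm{large}}).
\]
The goal is to show $|\sigma| \ge \nu_1 n$ for a constant $\nu_1 = \nu_1(\d,\rho)$, which is exactly the claim.

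For the large coordinates, a direct counting bound suffices: since $\sum_k |x_k|^2 = 1$, we have $|\sigma_{\mathrm{large}}| \cdot \nu_3^2/n \le 1$, so $|\sigma_{\mathrm{large}}| \le n/\nu_3^2 \le (\d/4) n$ once $\nu_3 \ge 2/\sqrt{\d}$. For the small coordinates I would argue by contradiction with incompressibility: suppose $|\sigma_{\mathrm{small}}|$ is large, say larger than $(1 - \d/2) n$. Let $y$ be the vector obtained from $x$ by zeroing out all coordinates in $\sigma_{\mathrm{small}}$; then $|\supp(y)| \le \d n/2 + |\sigma_{\mathrm{large}}| \le \d n$ (after also controlling $\sigma_{\mathrm{large}}$ as above, or folding it in), so $y$ is sparse, and $\|x - y\|_2^2 = \sum_{k \in \sigma_{\mathrm{small}}} |x_k|^2 \le |\sigma_{\mathrm{small}}| \cdot \nu_2^2/n \le \nu_2^2$. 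Choosing $\nu_2 = \rho$ (or $\rho/2$ to be safe) makes $\|x-y\|_2 \le \rho$, contradicting $x \in \Incomp(\d,\rho)$. Hence $|\sigma_{\mathrm{small}}| \le (1 - \d/2)n$, and combining, $|\sigma| \ge n - (1-\d/2)n - (\d/4)n = (\d/4) n$, so we may take $\nu_1 = \d/4$ (together with $\nu_2 = \rho/2$, $\nu_3 = 2/\sqrt\d$, up to harmless adjustments).

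One technical point to get right is the interaction between the two exceptional sets in the sparsity count: I would first dispose of $\sigma_{\mathrm{large}}$ by the $\ell^2$ bound, absorb its size into the $\d n$ budget, and only then run the compressibility argument on $\sigma_{\mathrm{small}}$, being careful that the support of the approximating sparse vector $y$ (which is $[n] \setminus \sigma_{\mathrm{small}}$, or a subset thereof) has size at most $\d n$; this may require splitting the $\d n$ budget as, say, $\d n/2$ for the complement of $\sigma_{\mathrm{small}}$ and using the remaining slack for $\sigma_{\mathrm{large}}$, which forces the precise thresholds above. The argument is otherwise elementary; the only ``obstacle'' is bookkeeping the constants so that all three of $\nu_1, \nu_2, \nu_3$ come out depending only on $\d$ and $\rho$, which the choices above achieve. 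Since this lemma is quoted from \cite{luh2018complex} as Lemma 5.3, I would simply cite that source and optionally include the short argument above for completeness.
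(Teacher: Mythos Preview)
Your proposal is correct and is exactly the standard argument (originating in Rudelson--Vershynin, Lemma~3.4 of \cite{RVLittlewoodOfford}, and reproduced in \cite{luh2018complex}); the paper itself does not include a proof but simply cites this lemma. The bookkeeping you flag at the end is handled cleanly by first bounding $|\sigma_{\mathrm{large}}| \le n/\nu_3^2 \le \d n/2$ via the $\ell^2$ constraint, then noting that incompressibility forces $|\sigma \cup \sigma_{\mathrm{large}}| > \d n$ (else the truncation to this set is a $\d n$-sparse vector within $\rho$ of $x$), whence $|\sigma| > \d n/2$.
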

The next lemma controls the norm of the images of compressible vectors.  We omit the proof which is a straightforward adaptation of Section 2.2 in \cite{RVLittlewoodOfford}. 

\begin{lemma} \label{lem:compressible}
Let $A$ be a $N \times n$ genuinely complex random matrix, $N \geq n/2$ and $\lambda \in \C$ with $|\lambda| \leq M \sqrt{N}$ for $M \geq 1$.  There exist $\d, \rho, c_3$ depending only on the subgaussian moment $B$ and $M$ such that
$$
\P \big( \inf_{x \in \Comp(\d, \rho)} \|(A - \lambda )x\|_2 \leq c_3 \sqrt{N} \big) \leq e^{-c_3 N}.
$$
\end{lemma}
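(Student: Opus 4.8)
The plan is to run the standard compressible-vector argument from Section 2.2 of \cite{RVLittlewoodOfford}: split $\Comp(\d,\rho)$ into sparse vectors, which are handled by an $\eps$-net on each possible support together with a union bound over supports, and an approximation term, which is absorbed using the operator-norm estimate of Proposition \ref{prop:opnorm}. The inequalities are all routine; the one thing that requires care is the order in which the various constants are chosen.

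\textbf{Step 1: a single-vector invertibility bound.} Fix $x \in S^{n-1}_{\C}$ and let $A_1,\dots,A_N$ be the independent rows of $A$, so that the $i$-th coordinate of $(A-\lambda)x$ is $\pr{A_i}{x}-\lambda x_i$. The real part of $\pr{A_i}{x}$ is a sum of independent mean-zero subgaussian variables of total variance $\|x\|_2^2=1$, so there are constants $\kappa,p_0>0$ depending only on $B$ with
\[ \P\bigl(|\pr{A_i}{x}-\lambda x_i|\ge\kappa\bigr)\ge p_0 \]
\emph{uniformly over the deterministic shift} $\lambda x_i$ (split into $|\lambda x_i|$ large, where one uses only that $\pr{A_i}{x}$ has bounded variance, and $|\lambda x_i|$ small, where one uses anti-concentration of a unit-variance subgaussian variable). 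Since the rows are independent, a Chernoff bound gives $|\{i:|\pr{A_i}{x}-\lambda x_i|\ge\kappa\}|\ge \tfrac{p_0}{2}N$ with probability at least $1-e^{-c_1 N}$, hence
\[ \P\Bigl(\|(A-\lambda)x\|_2\le\kappa\sqrt{\tfrac{p_0}{2}N}\Bigr)\le e^{-c_1 N}, \]
where $\kappa,p_0,c_1>0$ depend only on $B$, and in particular not on $\d$ or $\rho$.

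\textbf{Steps 2--3: nets, sparse vectors, and the choice of $\d,\rho$.} By Proposition \ref{prop:opnorm} we may fix $M_0,c_2$ (depending on $B,M$) with $\|A-\lambda\|\le M_0\sqrt N$ off an event of probability $e^{-c_2 N}$. For each $J\subset[n]$ with $|J|\le\d n$, apply Proposition \ref{prop:nets} to the unit sphere of $\C^J$ to obtain an $\eps_0$-net $\NN_J$ with $|\NN_J|\le (C_1/\eps_0)^{2|J|}\le(C_1/\eps_0)^{2\d n}$. Taking a union bound over the $\binom{n}{\lfloor\d n\rfloor}$ supports and over all points of all $\NN_J$, Step 1 and Proposition \ref{prop:opnorm} produce an event $\EE$ with $\P(\EE^c)\le \binom{n}{\lfloor\d n\rfloor}(C_1/\eps_0)^{2\d n}e^{-c_1 N}+e^{-c_2 N}$ on which $\|A-\lambda\|\le M_0\sqrt N$ and $\|(A-\lambda)y\|_2\ge\kappa\sqrt{p_0N/2}$ for every net point $y$. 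Setting $\eps_0:=\kappa\sqrt{p_0/2}/(2M_0)$ and approximating an arbitrary sparse $x\in S^{n-1}_{\C}$ by the nearest point of $\NN_{\supp(x)}$ gives $\|(A-\lambda)x\|_2\ge\tfrac12\kappa\sqrt{p_0N/2}$ on $\EE$. For $x\in\Comp(\d,\rho)$ write $x=y+z$ with $y$ sparse and $\|z\|_2\le\rho$, so $\|y\|_2\ge1-\rho$ and on $\EE$
\[ \|(A-\lambda)x\|_2\ \ge\ \|y\|_2\cdot\tfrac12\kappa\sqrt{\tfrac{p_0}{2}N}\ -\ M_0\sqrt N\,\rho; \]
choosing $\rho:=\kappa\sqrt{p_0/2}/(8M_0)$ (so $\rho\le1/2$) yields $\|(A-\lambda)x\|_2\ge\tfrac18\kappa\sqrt{p_0N/2}=:c_4\sqrt N$ on $\EE$ for all $x\in\Comp(\d,\rho)$. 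Finally, using $N\ge n/2$ and $\binom{n}{\lfloor\d n\rfloor}\le(C/\d)^{\d n}$, the entropy factor is bounded by $\bigl((C/\d)(C_1/\eps_0)^2\bigr)^{2\d N}$, which is $\le e^{c_1 N/2}$ once $\d$ is small enough in terms of the now-fixed constants $\kappa,p_0,c_1,M_0$. For this $\d$, $\P(\EE^c)\le e^{-c_1 N/2}+e^{-c_2 N}\le e^{-c_3 N}$, and shrinking $c_3$ so that also $c_3\le c_4$ finishes the proof.

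\textbf{Main obstacle.} There is no hard estimate; the subtlety is purely logical. The single-vector exponent $c_1$ and the anti-concentration constants $\kappa,p_0$ must be extracted first, depending only on $B$ (and $M_0,c_2$ on $B,M$), so that $\d,\rho,\eps_0$ can afterwards be chosen small enough — in particular small enough to beat the combinatorial factor $\binom{n}{\d n}(C_1/\eps_0)^{2\d n}$. The only model-specific point to verify is that the per-row anti-concentration in Step 1 is uniform over shifts of size up to $|\lambda|\le M\sqrt N$, which is immediate from the bounded variance of $\pr{A_i}{x}$.
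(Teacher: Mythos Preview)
Your proposal is correct and is precisely the argument the paper has in mind: the paper omits the proof entirely, stating only that it ``is a straightforward adaptation of Section~2.2 in \cite{RVLittlewoodOfford},'' and your single-vector anti-concentration $+$ Chernoff bound, followed by a net on each sparse support and the final compressible approximation via Proposition~\ref{prop:opnorm}, is exactly that adaptation. The order of choice of constants you emphasize (fix $\kappa,p_0,c_1,M_0$ first, then pick $\eps_0,\rho$, then shrink $\d$ to beat the entropy) is the standard one and is handled correctly.
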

\subsection{Small ball probability and arithmetic structure in $\R$}
At several points in the proof of Theorem \ref{thm:rectangular}, we will need quantitative control on the spread of a random variable.

\begin{definition}
The \emph{L\'evy concentration function} of a random vector $S \in \R^m$ (or $\C^m$) is defined for $\e > 0$ as
$$
\LL(S, \e) = \sup_{v \in \R^m (\text{or } \C^m)} \P(\|S-v\|_2 \leq \e).
$$
\end{definition}

Below we recount several results for real random variables.  

\begin{lemma}[Lemma 2.6, \cite{RVRectangle}] \label{lem:singlecoordinate}
Let $\xi$ be a real random variable with mean zero, unit variance, and finite fourth moment.  Then for every $\e \in (0,1)$, there exists $p \in (0,1)$ which depends only on $\e$ and on the fourth moment, and such that
$$
\LL(\xi, \e) \leq p.
$$
\end{lemma}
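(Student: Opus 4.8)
The plan is to fix an arbitrary point $v \in \R$ and bound $\P(|\xi - v| \le \e)$ by a constant strictly less than $1$ that depends only on $\e$ and on $M_4 := \E \xi^4$, uniformly in $v$; since the right-hand side of the claimed inequality does not depend on $v$, taking the supremum over $v$ then yields the bound on $\LL(\xi,\e)$. The guiding principle is that a mean-zero, unit-variance random variable with a controlled fourth moment genuinely spreads its mass on the scale of its standard deviation and so cannot place more than a definite fraction of its mass in any window of length $2\e < 2$. I would make this quantitative by applying the Paley--Zygmund inequality to the nonnegative random variable $Y := (\xi - v)^2$.

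First I would record the first two moments of $Y$. Expanding and using $\E \xi = 0$ and $\E \xi^2 = 1$ gives $\E Y = \E\xi^2 - 2v\E\xi + v^2 = 1 + v^2 \ge 1$. For the second moment, the elementary inequality $(a+b)^4 \le 8(a^4 + b^4)$ yields $(\xi - v)^4 \le 8(\xi^4 + v^4)$, and since $M_4 \ge (\E\xi^2)^2 = 1$ and $1 + v^4 \le (1+v^2)^2$, we get $\E Y^2 = \E(\xi-v)^4 \le 8(M_4 + v^4) \le 8 M_4 (1+v^2)^2$. The point of these two estimates is that $\E Y^2 \le 8 M_4 (\E Y)^2$, so the ratio $(\E Y)^2/\E Y^2$ is bounded below by $1/(8M_4)$ regardless of how large $|v|$ is.

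Then I would invoke Paley--Zygmund with the choice $\theta = \e^2 \in (0,1)$: for the nonnegative variable $Y$,
\[ \P\big( Y > \theta\, \E Y \big) \ \ge\ (1-\theta)^2\, \frac{(\E Y)^2}{\E Y^2} \ \ge\ \frac{(1-\e^2)^2}{8 M_4}. \]
Because $\E Y = 1 + v^2 \ge 1$, the threshold satisfies $\theta\, \E Y = \e^2(1+v^2) \ge \e^2$, so $\{Y > \theta\,\E Y\} \subseteq \{Y > \e^2\} = \{|\xi - v| > \e\}$. Hence $\P(|\xi - v| > \e) \ge (1-\e^2)^2/(8M_4)$ for every $v \in \R$, and consequently $\LL(\xi,\e) \le 1 - (1-\e^2)^2/(8M_4) =: p$, a constant in $(0,1)$ depending only on $\e$ and $M_4 = \E\xi^4$.

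I do not expect a genuine obstacle, since the statement is short, but two points merit care. First, the finite fourth moment hypothesis is essential and cannot be dropped: a variable placing mass $1 - N^{-2}$ at $0$ and the remaining mass far away still has unit variance, yet $\LL(\xi,\e) \to 1$ as $N \to \infty$; in the argument above the fourth moment enters precisely through the bound $\E Y^2 \le 8 M_4 (\E Y)^2$. Second, one must make sure the estimate is uniform over all $v \in \R$, including $|v|$ large; this is handled automatically in the Paley--Zygmund computation because $\E Y^2$ is at most a constant (depending only on $M_4$) times $(\E Y)^2$, so the relevant ratio never degenerates. An essentially equivalent alternative is to split $\E(\xi-v)^2 = \E[(\xi-v)^2\one_{|\xi-v|\le\e}] + \E[(\xi-v)^2\one_{|\xi-v|>\e}] \le \e^2\,\P(|\xi-v|\le\e) + \sqrt{\E(\xi-v)^4}\,\sqrt{\P(|\xi-v|>\e)}$ by Cauchy--Schwarz, insert $\E(\xi-v)^2 = 1+v^2$ and $\E(\xi-v)^4 \le 8M_4(1+v^2)^2$, and solve the resulting inequality for $\P(|\xi-v|\le\e)$; this gives the same conclusion with the same ingredients.
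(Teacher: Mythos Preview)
Your argument is correct: the Paley--Zygmund application to $Y=(\xi-v)^2$, together with the moment bounds $\E Y=1+v^2$ and $\E Y^2\le 8M_4(\E Y)^2$, gives a uniform lower bound on $\P(|\xi-v|>\e)$ depending only on $\e$ and $M_4$, which is exactly what is needed. The paper itself does not supply a proof here---the lemma is simply quoted as Lemma~2.6 of \cite{RVRectangle}---so there is nothing to compare against in this document; your proof is the standard second-moment argument one would expect for this statement.
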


This rather crude bound can be significantly improved when more is known about the random variable.  In particular, a well-developed theory exists when $S = \sum_{k=1}^N a_k \xi_k$ where $a_k$ are fixed vectors and $\xi_k$ are independent random variables.  This question is the basis of Littlewood-Offord theory and the situation when $a_k$ are scalars has a long history in random matrix theory \cite{TVInverseLO, RVLittlewoodOfford}.  The fundamental observation in Littlewood-Offord theory is that the L\'evy concentration function is dependent on the additive structure of the coefficients, $a_k$.  For the scalar case, Rudelson and Vershynin \cite{RVLittlewoodOfford} defined the essential least common denominator for the vector of coefficients, $a = (a_1, \dots, a_N)$, to be
$$
\LCD_{\a, \g}(a) := \inf \left \{\theta>0 : \dist(\theta a, \mathbb{Z}^N) < \min( \gamma \|\theta a\|_2, \alpha)\right \}
$$ 
which roughly captures the length of the shortest arithmetic progression in which $a$ can be embedded.  

In \cite{RVRectangle}, Rudelson and Vershynin generalized this notion to higher dimensions.  If we now allow $a = (a_1, \dots, a_N)$ to be a sequence of vectors $a_k \in \R^m$, then we define the product of such a multi-vector $a$ and a vector $\theta \in \R^m$ as
$$
\theta \cdot a = (\< \theta, a_1 \> , \dots, \< \theta, a_N \> ) \in \R^N.
$$ 
Then we define, for $\a > 0$ and $\g \in (0,1)$,
$$
\LCD_{\a, \g}(a) := \inf \left \{ \|\theta\|_2 : \theta \in \R^m, \dist(\theta \cdot a, \Z^N) < \min(\g \|\theta \cdot a\|_2, \a) \right \}
$$

The following theorem provides a bound on the small ball probability in terms of this generalized essential least common denominator.

\begin{theorem}[Theorem 3.3, \cite{RVRectangle}] \label{thm:smallball}
Let $a \in (a_1, \dots, a_N)$ be a sequence of vectors $a_k \in \R^m$ which satisfy
$$
\sum_{k=1}^N \< a_k, x \> ^2 \geq \|x\|_2^2 \qquad \text{ for every } x \in \R^m.
$$
Let $\xi_1, \dots, \xi_N$ be independent real random variables, such that $\LL(\xi_k, 1) \leq 1-b$ for some $b>0$.  Consider the random sum $S = \sum_{k=1}^N a_k \xi_k$.  Then, for every $\a > 0$ and $\g \in (0,1)$, and for 
$$
\e \geq \frac{\sqrt{m}}{\LCD_{\a, \g} (a)},
$$
we have
$$
\LL(S, \e \sqrt{m}) \leq \left( \frac{C \e}{\gamma \sqrt{b}} \right)^m + C^m e^{-2b \a^2}.
$$
\end{theorem}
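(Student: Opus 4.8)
The plan is to run the classical Halász-type Fourier (Esseen smoothing) argument, adapted to the multidimensional least common denominator $\LCD_{\a,\g}(a)$ for which the statement was designed. First I would invoke a multidimensional Esseen-type concentration inequality: there is an absolute constant $C$ such that for every random vector $S \in \R^m$ and every $\e > 0$,
$$ \LL(S, \e \sqrt{m}) \leq C^m \int_{\{\theta \in \R^m :\, \|\theta\|_2 \leq \sqrt{m}\}} \left| \E \exp\left( \sqrt{-1}\, \langle \theta, S \rangle / \e \right) \right| \, d\theta . $$
This follows by convolving the distribution of $S$ with a smooth bump whose Fourier transform lives essentially on a ball of radius $\sqrt m$, together with the volume estimate $\vol(\{\|\theta\|_2 \leq \sqrt m\}) = C^m$ up to polynomial factors; the clean $C^m$ prefactor here is exactly what makes the final bounds dimensionally correct, so this step warrants care. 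By independence of the $\xi_k$ the characteristic function factorizes: $\E \exp(\sqrt{-1}\langle \theta, S\rangle/\e) = \prod_{k=1}^N \E \exp(\sqrt{-1}\, \langle a_k, \theta\rangle \xi_k/\e)$.

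Next I would bound each factor. Using $|z| \leq \exp(-\tfrac12(1-|z|^2))$ for $z$ in the closed unit disc and passing to the symmetrization $\bar\xi_k$ of $\xi_k$, one obtains $\prod_k |\E \exp(\cdots)| \leq \exp(-\tfrac12 \sum_k \E(1 - \cos(\langle a_k,\theta\rangle \bar\xi_k/\e)))$. The hypothesis $\LL(\xi_k,1) \leq 1-b$ forces $\bar\xi_k$ to keep at least a $b$-fraction of its mass bounded away from the origin, and a careful elementary analysis of $\E(1-\cos(\cdot))$ converts the product into a bound of the form $|\E \exp(\sqrt{-1}\langle\theta,S\rangle/\e)| \leq \exp(-cb\,\min(\dist(\tfrac\theta\e \cdot a, \Z^N)^2, \a^2))$. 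Here the truncation against $\a^2$ — precisely the cutoff built into the definition of $\LCD_{\a,\g}$ — is what will produce the additive error $e^{-2b\a^2}$.

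Finally I would exploit the arithmetic structure. Since $\e \geq \sqrt m/\LCD_{\a,\g}(a)$, every $\theta$ in the integration domain satisfies $\|\theta/\e\|_2 \leq \sqrt m/\e \leq \LCD_{\a,\g}(a)$, so the defining property of the generalized LCD applies and, using the non-degeneracy hypothesis $\sum_k \langle a_k, x\rangle^2 \geq \|x\|_2^2$ (which gives $\|\tfrac\theta\e \cdot a\|_2 \geq \|\tfrac\theta\e\|_2$), we get $\dist(\tfrac\theta\e\cdot a, \Z^N) \geq \min(\g\|\tfrac\theta\e\cdot a\|_2, \a) \geq \min(\tfrac\g\e\|\theta\|_2, \a)$. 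I would then split the ball $\{\|\theta\|_2 \leq \sqrt m\}$ according to whether $\tfrac\g\e\|\theta\|_2 < \a$ or not. On the first piece the integrand is at most $\exp(-cb\g^2\|\theta\|_2^2/\e^2)$, and extending the integral to all of $\R^m$ gives a Gaussian integral equal to $(C\e/(\g\sqrt b))^m$; on the second piece the integrand is at most $e^{-cb\a^2}$ while the remaining volume is at most $C^m$, contributing $C^m e^{-cb\a^2}$. Absorbing the $C^m$ from Esseen and adjusting the constants yields the claimed estimate $\LL(S,\e\sqrt m) \leq (C\e/(\g\sqrt b))^m + C^m e^{-2b\a^2}$. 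The main obstacle is the middle step: faithfully passing from the one-dimensional anti-concentration hypothesis on each $\xi_k$ to the aggregate Fourier decay expressed through $\LCD_{\a,\g}(a)$, with exactly the right truncation — the generalized LCD of \cite{RVRectangle} was engineered precisely to make this interface clean, and verifying that is where the real work lies.
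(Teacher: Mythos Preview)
The paper does not give a proof of this statement: it is quoted verbatim as Theorem 3.3 of \cite{RVRectangle} and used as a black box, with only the remark that the hypotheses on the $\xi_k$ can be relaxed by first symmetrizing. So there is nothing in the paper to compare against.

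That said, your sketch is essentially the proof that appears in \cite{RVRectangle}. The ingredients are exactly Esseen's multidimensional smoothing inequality, the symmetrization bound $|\E e^{it\xi_k}|\le \exp\big(-\tfrac12\,\E(1-\cos(t\bar\xi_k))\big)$, conditioning on the event $|\bar\xi_k|\in[1,2]$ (which has probability at least $b$ after the $\LL(\xi_k,1)\le 1-b$ hypothesis, possibly after a rescaling) to turn the cosine sum into $\dist(\tfrac{\theta}{\e}\cdot a,\Z^N)^2$ truncated at $\a^2$, and finally the observation that $\e\ge \sqrt m/\LCD_{\a,\g}(a)$ keeps $\theta/\e$ inside the ball of radius $\LCD_{\a,\g}(a)$ so the defining lower bound $\dist(\cdot)\ge\min(\g\|\tfrac\theta\e\cdot a\|_2,\a)$ applies. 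Your split of the integration domain and the Gaussian-integral computation on the inner region are the standard finish. The only place to be a little more careful than your outline is the passage from $\LL(\xi_k,1)\le 1-b$ to a uniform lower bound on $\E(1-\cos(t\bar\xi_k))$ for $|t|$ not too small: one typically restricts to $|\bar\xi_k|$ in a bounded annulus and uses the elementary inequality $1-\cos(2\pi u)\gg \dist(u,\Z)^2$ there, which is exactly how the integer lattice $\Z^N$ enters. With that step made explicit your argument is complete and matches the source.
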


\begin{remark}
In \cite{RVRectangle}, the statement of the theorem requires identically distributed, mean zero random variables, but the proof (which begins with symmetrization anyways) can be easily altered to handle random variables with arbitrary and possibly different means.  The identical distribution requirement can also be relaxed as long as the random variables have unit variance and a uniform bound on the subgaussian moment.
\end{remark}

\subsection{Arithmetic structure in $\C$}
In \cite{luh2018complex}, the first author generalized the notion of essential least common denominator to the complex setting.  

\begin{definition}
If we let $a = (a_1, \dots, a_N)$ be a vector of complex numbers, we define the essential least common denominator of $a$ to be
$$
\LCD_{\a, \g}(a) := \inf \left \{\|\theta\|_2: \theta \in \R^2, \dist([a] \theta, \Z^{2N}) < \min(\g \|\underline{a}\theta\|, \a) \right \}
$$ 
By Lemma \ref{lem:RtoC}, an equivalent definition is
$$
\LCD_{\a, \g}(a) := \inf \left \{|\theta|: \theta \in \C, \dist(\underline{\theta a}, \Z^{2N}) < \min(\g |a \theta|, \a) \right \}
$$
\end{definition}

We extend this definition to higher dimensions below.
\begin{definition}
Let $a = (a_1, \dots, a_N)$ be a sequence of vectors $a_k \in \C^m$.  
Then we define, for $\a > 0$ and $\g \in (0,1)$,
$$
\LCD_{\a, \g}(a) := \inf \left \{ \|\theta\|_2 : \theta \in \R^{2m}, \dist([A]^T \theta, \Z^{2N}) < \min(\g \|\theta \cdot a\|_2, \a) \right \}
$$
where $A$ is the matrix with columns $a_k$.
An equivalent, more geometric, definition is the following:
$$
\LCD_{\a, \g}(a) := \inf \left \{ \|\theta\|_2 : \theta \in \C^m, \dist(\underline{\theta \cdot a}, \Z^{2N}) < \min(\g \|\theta \cdot a\|_2, \a) \right \}
$$
where we define the product of such a multi-vector $a$ and a vector $\theta \in \C^m$ as
$$
\theta \cdot a = (\< \theta, a_1 \> , \dots, \< \theta, a_N \> ) \in \C^N.
$$ 
\end{definition}

\begin{remark}
Note that the first definition makes it clear that the $\LCD$ of $N$ complex vectors can be related to the $\LCD$ of $2N$ real vectors (the $2N$ columns of $[A]$).  This allows us to use Theorem \ref{thm:smallball} in the complex setting.  
\end{remark}

\subsection{Least common denominator of incompressible vectors}
We recall a lemma from \cite{luh2018complex} which provides a lower bound on the $\LCD$ of incompressible vectors.

\begin{lemma}[Lemma 5.12, \cite{luh2018complex}] \label{lem:lowerboundLCD}
There exist constants $\gamma, \lambda > 0$ only depending on $\d$ and $\rho$ such that for any incompressible vector $x \in \Incomp(\d, \rho)$ one has for every $\a > 0$,
$$
\LCD_{\a, \g}(x) \geq \lambda \sqrt{n}.
$$
\end{lemma}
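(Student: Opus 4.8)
The plan is to show directly that no complex number $\theta$ of small modulus can enter the infimum defining $\LCD_{\a,\g}(x)$, once $\g$ (and with it $\lambda$) is chosen small in terms of the structural constants furnished by Lemma~\ref{lem:spreadset}. Since $x$ is a unit vector, the relevant constraint on $\theta$ in the geometric form of the definition reads $\dist(\underline{\theta x},\Z^{2n})<\min(\g|\theta|,\a)$. It therefore suffices to produce $\g\in(0,1)$ and $\lambda>0$, depending only on $\d,\rho$, such that for \emph{every} $\theta\in\C$ with $0<|\theta|<\lambda\sqrt n$ one has $\dist(\underline{\theta x},\Z^{2n})\ge\g|\theta|$; this is $\ge\min(\g|\theta|,\a)$ for every $\a>0$, so no such $\theta$ can satisfy the constraint and hence $\LCD_{\a,\g}(x)\ge\lambda\sqrt n$.

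First I would use that $\Z^{2n}=(\Z^2)^n$ is a product lattice, so the $\ell^2$-distance to it splits over coordinates: writing the coordinates of $\underline{\theta x}$ as $\RR(\theta x_k)$ and $\II(\theta x_k)$ for $k\in[n]$,
\[ \dist(\underline{\theta x},\Z^{2n})^2 \;=\; \sum_{k=1}^n\Big(\dist(\RR(\theta x_k),\Z)^2+\dist(\II(\theta x_k),\Z)^2\Big). \]
Next, apply Lemma~\ref{lem:spreadset} to the incompressible vector $x$ to obtain a spread set $\sigma\subseteq[n]$ with $|\sigma|\ge\nu_1 n$ and $\nu_2/\sqrt n\le|x_k|\le\nu_3/\sqrt n$ for $k\in\sigma$, where $\nu_1,\nu_2,\nu_3>0$ depend only on $\d,\rho$. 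Put $\lambda:=1/(2\nu_3)$. Then for $|\theta|<\lambda\sqrt n$ and $k\in\sigma$ we have $|\theta x_k|=|\theta|\,|x_k|<\tfrac12$, hence $|\RR(\theta x_k)|<\tfrac12$ and $|\II(\theta x_k)|<\tfrac12$, so each of $\RR(\theta x_k),\II(\theta x_k)$ has nearest integer $0$ and the $k$-th summand above equals exactly $|\theta x_k|^2$.

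Discarding the indices outside $\sigma$ and using the spread lower bound,
\[ \dist(\underline{\theta x},\Z^{2n})^2 \;\ge\; \sum_{k\in\sigma}|\theta x_k|^2 \;=\; |\theta|^2\sum_{k\in\sigma}|x_k|^2 \;\ge\; |\theta|^2\,|\sigma|\,\frac{\nu_2^2}{n} \;\ge\; \nu_1\nu_2^2\,|\theta|^2 . \]
Because $\nu_1\nu_2^2\le\sum_{k\in\sigma}|x_k|^2\le\|x\|_2^2=1$, the choice $\g:=\tfrac12\nu_2\sqrt{\nu_1}$ lies in $(0,1)$ and depends only on $\d,\rho$, and the last display gives $\dist(\underline{\theta x},\Z^{2n})\ge\nu_2\sqrt{\nu_1}\,|\theta|\ge\g|\theta|$, which is precisely the bound sought. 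Hence $\LCD_{\a,\g}(x)\ge\lambda\sqrt n$ for every $\a>0$, with $\g,\lambda$ functions of $\d,\rho$ alone.

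As for the main obstacle: there is none of real substance here, the argument being short. The only point worth care is that $\Z^{2n}$ splits completely into one-dimensional factors, so the distance to it is governed separately by the real and imaginary parts of the coordinates and one is reduced to the elementary identity $\dist(t,\Z)=|t|$ for $|t|\le\tfrac12$. After that, smallness of $|\theta|$ on the spread set annihilates all relevant roundings, incompressibility (through Lemma~\ref{lem:spreadset}) converts the surviving mass into a bound of the form $\dist(\underline{\theta x},\Z^{2n})\ge c|\theta|$ with $c>0$, and the remaining work is merely the bookkeeping that the resulting $\g$ is admissible, i.e. lies in $(0,1)$ and depends only on $\d$ and $\rho$.
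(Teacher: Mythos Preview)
Your argument is correct. The paper does not actually prove this lemma; it merely quotes it from \cite{luh2018complex} (Lemma~5.12 there) without reproducing the argument. What you wrote is precisely the standard proof in this circle of ideas (and almost certainly the one in the cited reference): use the spread set $\sigma$ from Lemma~\ref{lem:spreadset}, choose $\lambda=1/(2\nu_3)$ so that $|\theta x_k|<1/2$ on $\sigma$ for $|\theta|<\lambda\sqrt n$, and then observe that on these coordinates the nearest lattice point is the origin, giving $\dist(\underline{\theta x},\Z^{2n})\ge\nu_2\sqrt{\nu_1}\,|\theta|$. Your verification that $\gamma=\tfrac12\nu_2\sqrt{\nu_1}\in(0,1)$ via $\sum_{k\in\sigma}|x_k|^2\le 1$ is clean, and the observation that $\dist\ge\gamma|\theta|\ge\min(\gamma|\theta|,\alpha)$ handles all $\alpha>0$ at once. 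Nothing to add.
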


\subsection{Distance to subspaces and arithmetic structure} 
In this section, we utilize the arithmetic structure of subspaces to control the distances of random vectors to random subspaces.  It is in this section that we exploit having a genuinely complex random matrix and we gain the extra factor of $\e$ to $\e^2$ as compared to the real case.  In particular, we show the following optimal bound on the distance of a random vector to a random subspace.

\begin{theorem}[Distance to random subspace] \label{thm:distance}
Let $X$ be a vector in $\C^N$ whose coordinates are genuinely complex (but not necessarily centered) and independent.  Let $H$ be a random subspace in $\C^N$ spanned by $N-m$ genuinely complex random vectors (not necessarily centered) independent of $X$, with $0< m < \tilde{c} N$.  Then, for every $v \in \C^N$ and every $\e > 0$, we have
$$
\P(\dist(X, H + v) < \eps \sqrt{m}) \leq (C \eps)^{2m} + e^{- c N},
$$ 
where $C, c, \tilde{c} >0$ depend only on the subgaussian moment $B$.
\end{theorem}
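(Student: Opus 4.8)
The plan is to reduce the distance computation to a small ball probability estimate for a random linear combination, and then invoke the complex-valued small ball inequality (Theorem \ref{thm:smallball}) via the real-embedding machinery of Lemma \ref{lem:RtoC}. First I would express the distance to the affine subspace $H + v$ in terms of the orthogonal projection onto the orthogonal complement of $H$. Condition on $H$ (and on $v$), so that $H$ is fixed. Since $H$ is spanned by $N - m$ vectors, generically $\dim H^\perp = m$; let $w_1, \ldots, w_m$ be an orthonormal basis for $H^\perp$ (a genuinely complex argument like Lemma \ref{lem:compressible} applied to the $(N-m)\times N$ matrix whose rows span $H$ shows that, off an event of probability $e^{-cN}$, the spanning vectors are well-conditioned, so $H^\perp$ is exactly $m$-dimensional and the basis is quantitatively stable). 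Then
\[
\dist(X, H + v)^2 = \sum_{j=1}^m \left| \langle X - v, w_j \rangle \right|^2 = \| W^\ast (X - v) \|_2^2,
\]
where $W$ is the $N \times m$ matrix with columns $w_j$, and $W^\ast W = \Id_m$.

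Next I would set this up as a small ball problem. Writing $X = \mu + \zeta$ where $\mu$ is the (possibly nonzero) mean vector and $\zeta$ has independent genuinely complex centered coordinates, we have $W^\ast(X - v) = \sum_{k=1}^N \bar{w}_k (\zeta_k) + (\text{deterministic})$, where $w_k \in \C^m$ is the $k$-th row of $W$. The rows $a_k := \bar w_k$ of $W$ form a sequence of $N$ vectors in $\C^m$ satisfying the isotropy condition $\sum_k \langle a_k, x\rangle \langle x, a_k \rangle = \|x\|_2^2$ for all $x \in \C^m$, precisely because $W^\ast W = \Id_m$. Thus $\LL(W^\ast(X-v), \e\sqrt{2m})$ is exactly the quantity controlled by the complex version of Theorem \ref{thm:smallball}, once we translate to the $2N$ real columns of $[W]$ as in the Remark following the complex $\LCD$ definition. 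Each coordinate $\zeta_k$ has its real and imaginary parts independent with unit variance and bounded subgaussian moment, so after the real embedding the relevant $2N$ real random variables have $\LL(\cdot, 1) \le 1 - b$ for some $b = b(B) > 0$ by Lemma \ref{lem:singlecoordinate}; this gives $\LL\bigl(\underline{W^\ast(X-v)}, \e\sqrt{2m}\bigr) \le (C\e)^{2m} + C^{2m} e^{-2b\a^2}$, provided $\e \ge \sqrt{2m}/\LCD_{\a,\g}([W])$.

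The main obstacle — and the heart of the argument — is establishing the lower bound on the least common denominator of the (random) matrix $[W]$: we need $\LCD_{\a,\g}([W]) \gtrsim \sqrt{N}$ with $\a$ of order $\sqrt{N}$, so that (i) the hypothesis $\e \ge \sqrt{2m}/\LCD$ is satisfied for all $\e$ down to roughly $\sqrt{m/N}$ (which, combined with the trivial bound $\dist \le \|X-v\|$ and a separate bound for $\e$ outside this range, yields the full statement for all $\e > 0$), and (ii) the error term $e^{-2b\a^2}$ becomes $e^{-cN}$. Since $W^\perp = H^\perp$ is spanned by left-singular-type vectors of the random matrix defining $H$, one expects $H$ to be a ``generic'' subspace and hence $H^\perp$ to have no arithmetic structure; to make this rigorous I would condition on $H$, cover the set of possible $\theta \in \C^m$ (or $\theta \in \R^{2m}$) of small norm, and for each fixed $\theta$ estimate the probability that $\dist([W]^T\theta, \Z^{2N})$ is small, using the randomness of the rows of $H$'s spanning matrix — this is the standard ``structure is rare'' union-bound argument from \cite{RVRectangle, luh2018complex}, adapted to the genuinely complex setting, and is where the factor of $2m$ (rather than $m$) in the exponent is produced. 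Once the $\LCD$ lower bound is in hand, integrating it against Theorem \ref{thm:smallball} and using $\|x - y\|_2 = \|\underline x - \underline y\|_2$ from Lemma \ref{lem:RtoC} to pass back from $\R^{2N}$ to $\C^N$ completes the proof, after absorbing the $e^{-cN}$ exceptional events (well-conditioning of $H$, operator norm bounds from Proposition \ref{prop:opnorm}, and the small ball error term).
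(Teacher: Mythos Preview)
Your overall architecture matches the paper's: reduce to a small ball estimate for $P_{H^\perp}(X-v)$, apply Theorem \ref{thm:smallball} through the real embedding to get a bound depending on $\LCD_{\a,\g}(H^\perp)$, and then separately show that this $\LCD$ is large with high probability using the randomness of $H$. This is exactly the decomposition into Theorems \ref{thm:distancegeneral} and \ref{thm:structure}.

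However, there is a quantitative gap at the key step. You state that the target is $\LCD_{\a,\g}([W]) \gtrsim \sqrt{N}$, which would make Theorem \ref{thm:smallball} applicable only down to $\e \approx \sqrt{m/N}$, and you claim the remaining range of $\e$ can be handled by ``the trivial bound $\dist \le \|X-v\|$ and a separate bound.'' This does not work. For $\e$ just below $\sqrt{m/N}$, monotonicity only gives $\P(\dist < \e\sqrt{m}) \le (C' m/N)^m + e^{-cN}$, and for small $m$ (say $m = O(1)$) the first term is merely polynomially small in $N$, far larger than the required $(C\e)^{2m} + e^{-cN}$ as $\e \to 0$. The paper's Theorem \ref{thm:structure} shows the much stronger bound $\LCD_{\a,c}(H^\perp) \ge c\sqrt{N}\,e^{cN/m}$ with probability $1 - e^{-cN}$; the exponential factor $e^{cN/m}$ is essential, because it pushes the threshold for $\e$ down to $\e_0 \approx \sqrt{m/N}\,e^{-cN/m}$, and then $(C\e_0)^{2m} \le (C')^{2m} e^{-2cN}$ is genuinely absorbed by $e^{-c'N}$ once $m \le \tilde{c}N$ with $\tilde{c}$ small. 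The union-bound/covering you describe is indeed how one proves this, but the covering is of level sets $S_D \subset S_{\C}^{N-1}$ (Lemma \ref{lem:smallnet}) with mesh $2\a/D$ tuned to each dyadic scale $D$ up to $c\sqrt{N}e^{cN/m}$, not a single cover of $\theta \in \C^m$; it is this dyadic iteration that drives the $\LCD$ up from the baseline $\sqrt{N}$ (Lemma \ref{lem:lowerboundLCD}) to the exponential scale.
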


We deduce Theorem \ref{thm:distance} via a covering argument that first requires a bound that holds for a fixed subspace and depends on the arithmetic structure of that subspace.  For $\a > 0$ and $\g \in (0,1)$, we define the \emph{essential least common denominator of a subspace} $E$ in $\C^N$ to be
$$
\LCD_{\a, \g} := \inf \left\{ \LCD_{\a, \g}(a) : a \in S(E) \right\}
$$
where $S(E)$ denotes the intersection of the unit sphere with $E$.
One can see that this is equivalent to 
$$
\LCD_{\a, \g} = \inf \left\{ \|\theta\|_2: \theta \in E, \dist(\underline{\theta}, \Z^{2N})< \min(\g \|\theta\|_2, \a) \right\}.
$$
We now combine this notion with Theorem \ref{thm:smallball} to yield the following bound on the distance.

\begin{theorem}[Distance to a general subspace] \label{thm:distancegeneral}
Let $X$ be a genuinely complex random vector (not necessarily centered) in $\C^N$.  Let $H$ be a subspace in $\C^N$ of dimension $N-m > 0$.  Then for every $v \in \C^N$, $\a > 0$, $\g \in (0,1)$, and for 
$$
\e \geq \frac{\sqrt{m}}{\LCD_{\a, \g} H^{\perp}},
$$
we have
$$
\P\big(\dist(X, H+v)< \e \sqrt{m}\big) \leq \left( \frac{C \e}{\g} \right)^{2m} + C^m e^{-c \a^2}
$$
where $C, c > 0$ depend only on the subgaussian moment $B$.
\end{theorem}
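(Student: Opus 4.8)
The plan is to deduce the estimate from a single application of the small-ball bound of Theorem \ref{thm:smallball}, after passing from $\C$ to $\R$ via the real-linearization map $M \mapsto [M]$ of Lemma \ref{lem:RtoC}. First I would write $\dist(X, H+v) = \dist(X-v, H) = \|P_{H^\perp}(X-v)\|_2$ and fix an $m \times N$ matrix $W$ whose rows form an orthonormal basis of $H^\perp$, so that $WW^\ast = I_m$ and $\|Wy\|_2 = \|P_{H^\perp} y\|_2$ for all $y \in \C^N$. With $X = (\xi_1,\dots,\xi_N)$ and $a_k := We_k \in \C^m$ the columns of $W$, this gives $\dist(X, H+v) = \big\| \sum_{k=1}^N \xi_k a_k - Wv \big\|_2$. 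Linearizing via Lemma \ref{lem:RtoC} (using $\underline{WX} = [W]\underline X$ and $\|z\|_2 = \|\underline z\|_2$), we obtain $\dist(X, H+v) = \big\| \sum_{\ell=1}^{2N} (\underline X)_\ell\, b_\ell - [W]\underline v \big\|_2$, where the $b_\ell \in \R^{2m}$ are the columns of $[W]$ and the $2N$ scalars $(\underline X)_\ell$ are exactly the independent real and imaginary parts of the $\xi_k$. Discarding the deterministic means (which changes neither $\LL$ nor any $\LCD$), each of these $2N$ scalars has mean zero, unit variance, and fourth moment controlled by $B$, so Lemma \ref{lem:singlecoordinate} yields $\LL\big((\underline X)_\ell, 1\big) \le 1 - b$ for a single constant $b = b(B) > 0$.

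Next I would verify the two structural hypotheses of Theorem \ref{thm:smallball} for the system $(b_\ell)_{\ell=1}^{2N}$ in $\R^{2m}$. Since $M \mapsto [M]$ is multiplicative and $[W]^{T} = [W^\ast]$, we have $[W][W]^{T} = [WW^\ast] = I_{2m}$; hence $\sum_\ell \langle b_\ell, x\rangle^2 = \|x\|_2^2$ for every $x \in \R^{2m}$, which is the required energy lower bound (with equality). For the $\LCD$, note that $\theta \cdot b = [W]^{T}\theta$ and $\|[W]^{T}\theta\|_2 = \|\theta\|_2$, so the sequential $\LCD$ of $(b_\ell)$ equals $\inf\{\|\theta\|_2 : \theta \in \R^{2m},\ \dist([W]^{T}\theta, \Z^{2N}) < \min(\g\|\theta\|_2, \a)\}$. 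Because every $\theta \in H^\perp$ is of the form $W^\ast\eta$ for a unique $\eta \in \C^m$ with $\|\eta\|_2 = \|\theta\|_2$ and $\underline{\theta} = [W^\ast]\underline\eta = [W]^{T}\underline\eta$, this infimum is precisely $\LCD_{\a,\g} H^\perp$. Thus $\LCD_{\a,\g}\big((b_\ell)_\ell\big) = \LCD_{\a,\g} H^\perp$.

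Finally, I would apply Theorem \ref{thm:smallball} in dimension $2m$ to the $2N$ vectors $(b_\ell)$ and variables $(\underline X)_\ell$: whenever $\e' \ge \sqrt{2m}/\LCD_{\a,\g} H^\perp$, it gives $\LL(\underline S, \e'\sqrt{2m}) \le (C\e'/(\g\sqrt b))^{2m} + C^{2m}e^{-2b\a^2}$, where $\underline S = \sum_\ell (\underline X)_\ell b_\ell$. Taking $\e' = \e/\sqrt 2$ so that $\e'\sqrt{2m} = \e\sqrt m$, and bounding $\P(\dist(X,H+v) < \e\sqrt m) \le \LL(\underline S, \e'\sqrt{2m})$, gives the result, except that the hypothesis $\e \ge \sqrt m/\LCD_{\a,\g} H^\perp$ only yields $\e' \ge \tfrac12\sqrt{2m}/\LCD_{\a,\g}H^\perp$, a loss of a factor $2$; this is absorbed by monotonicity of $t \mapsto \P(\dist(X,H+v) < t)$, applying the estimate at $2\e$ on the borderline range $\e \in [\sqrt m/\LCD_{\a,\g}H^\perp,\ 2\sqrt m/\LCD_{\a,\g}H^\perp)$ and using $\e_0 \le \e$ there. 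After absorbing $\sqrt2$, $\sqrt b$ and numerical factors into constants depending only on $B$, and writing $C^{2m}e^{-2b\a^2} = (C^2)^m e^{-2b\a^2}$, this is exactly $(C\e/\g)^{2m} + C^m e^{-c\a^2}$. The conceptual substance lies entirely in Theorem \ref{thm:smallball}; the one step demanding genuine care is the $\LCD$ identification in the middle paragraph — one must confirm that passing through $[\cdot]$ turns the coisometry $W$ into an honest isometry on $[W]^{T}$, so that the sequential $\LCD$ of the linearized coefficient system coincides \emph{on the nose} with $\LCD_{\a,\g} H^\perp$ rather than merely with a comparable quantity, and that the non-centered hypothesis and the uniform choice of $b$ cause no trouble.
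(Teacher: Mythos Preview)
Your proposal is correct and follows essentially the same approach as the paper: express $\dist(X,H+v)$ as the norm of a random sum via the projection onto $H^\perp$, pass to the real linearization (the paper uses $P_{H^\perp}$ directly while you use an explicit coisometry $W$, which amounts to the same thing), verify the isometry condition and the identification $\LCD_{\a,\g}\big((b_\ell)\big)=\LCD_{\a,\g}H^\perp$, and then apply Theorem~\ref{thm:smallball} in dimension $2m$. You are in fact slightly more careful than the paper about the factor-of-$\sqrt2$ discrepancy between the hypothesis $\e\ge\sqrt m/\LCD$ and the $\sqrt{2m}$ that Theorem~\ref{thm:smallball} requires; the paper absorbs this silently into constants.
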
 

\begin{proof}
We write $\underline{X}$ in coordinates, $\underline{X} = (\xi_1, \dots, \xi_{N}, \xi_{N+1}, \dots, \xi_{2N})$.  By Lemma \ref{lem:singlecoordinate}, each coordinate of $\underline{X}$ satisfies $\LL(\xi_k, 1/2) \leq 1-b$ for some $b > 0$ that only depends on the subgaussian moment $B$.  Thus, the random variables $\xi_k/2$ satisfy the assumptions of Theorem \ref{thm:smallball}.

Now, we convert the distance problem into a small ball probability calculation for a sum of independent vectors.  Let $P_H$ signify the orthogonal projection onto a subspace $H$.
\begin{equation}
\dist(X, H+v) = \|P_{H^\perp} (X-v) \|_2 = \left \| \sum_{k=1}^{2N} a_k \xi_k -w \right \|_2,
\end{equation}
where
$$
a_k = \left( \begin{array}{c}
\RR(P_{H^\perp} e_k) \\
\II(P_{H^\perp} e_k)
\end{array} \right), \qquad
a_{N+k} = \left( \begin{array}{c}
-\II(P_{H^\perp} e_k) \\
\RR(P_{H^\perp} e_k)
\end{array} \right), \qquad w = \underline{P_{H^\perp}v}
$$
for $1 \leq k \leq N$.  
For this sequence of vectors $a = (a_1, \dots, a_{2N})$, we have
$$
\sum_{k=1}^{2N} \< a_k, \underline{x} \> ^2 = \sum_{k=1}^N |\< P_{H^\perp} e_k, x \> |^2 = \sum_{k=1}^N |\<  e_k, x \> |^2 = \|x\|_2^2 \qquad \text{for any } x \in H^{\perp}
$$
so we can apply Theorem \ref{thm:smallball} in the space $H^\perp$ (which can be identified with $\C^m$ under a suitable isometry).  

For any $\theta \in H^\perp$ we have $\< \theta, P_{H^\perp} e_k \> = \< P_{H^\perp} \theta, e_k \> = \< \theta, e_k \> $ so by Lemma \ref{lem:RtoC},
$$
\left( \begin{array}{ccc}
 & a_k^T & \\
 & a_{N+k}^T & 
\end{array} \right)  \underline{\bar{\theta}} = [(P_{H^\perp} e_k)^T] \underline{\bar{\theta}} = \underline{(P_{H^\perp} e_k)^T \bar{\theta}} = \underline{ \< P_{H^\perp} e_k, \theta \> } = \underline{ \overline{ \< \theta, e_k \> }}.
$$
As conjugation will not alter the norm, we have
$$
\LCD_{\a, \g}(H^\perp) = \LCD_{\a, \g}(a).
$$
The result now follows from a direct application of Theorem \ref{thm:smallball}.
\end{proof}

To prove the distance bound we carry out a covering argument to exclude those possible $H^\perp$ with small $\LCD$ of a random subspace $H^\perp$.  In fact, we show that the $\LCD$ of such a subspace is typically exponentially large.

\begin{theorem}[Structure of a random subspace]\label{thm:structure}
Let $H$ be a random subspace in $\C^N$ spanned by $N-m$ genuinely complex random vectors, $1 \leq m \leq \tilde{c} N$.  Then, for $\alpha = c \sqrt{N}$, we have
$$
\P\big( \LCD_{\a, c}(H^\perp) < c \sqrt{N} e^{cN/m} \big) \leq e^{-cN},
$$
where $c \in (0,1)$ and $\tilde{c} \in (0, 1/2)$ depend only on the subgaussian moment $B$.
\end{theorem}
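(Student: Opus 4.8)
The plan is to follow the proof of the corresponding structure theorem for real rectangular matrices in \cite{RVRectangle}, replacing the real Littlewood--Offord / least common denominator machinery throughout by its genuinely complex counterpart: the complex $\LCD$ of \cite{luh2018complex} together with the small-ball bound of Theorem \ref{thm:smallball}, applied through the $[\,\cdot\,]$-correspondence of Lemma \ref{lem:RtoC} exactly as in the proof of Theorem \ref{thm:distancegeneral}. Write $A$ for the $(N-m)\times N$ matrix whose rows $X_1,\dots,X_{N-m}$ are the iid genuinely complex vectors spanning $H$, so that $H^{\perp}=\ker A$. Since $m\le\tilde c N<N/2$, Lemma \ref{lem:compressible} (with shift $0$, whose hypothesis $N-m\ge N/2$ holds) shows that off an event of probability $e^{-cN}$ no compressible vector lies in $\ker A$; likewise $\|A\|\le M\sqrt N$ off an event of probability $e^{-cN}$ by Proposition \ref{prop:opnorm}. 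Hence it suffices to bound the probability that some incompressible unit vector $x\in\ker A$ satisfies $\LCD_{\a,c}(x)<D_{0}:=c\sqrt N\, e^{cN/m}$. By Lemma \ref{lem:lowerboundLCD} every incompressible unit vector has $\LCD_{\a,c}(x)\ge\lambda\sqrt N$, so we may decompose this bad event dyadically over the $O(N/m)$ scales $D\in\{2^{j}\lambda\sqrt N\}$ with $\lambda\sqrt N\le D\le D_{0}$, and $N-m\ge(1-\tilde c)N$ is comparable to $N$ throughout.

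Next, fix a scale $D$. If $x$ is incompressible with $\|x\|_2=1$ and $\LCD_{\a,c}(x)\in[D,2D)$, then by the definition of the complex $\LCD$ there is $\theta\in\C$ with $|\theta|\in[D,2D)$ and a lattice point $p\in\Z^{2N}$ with $\|\underline{\theta x}-p\|_2<\a$ and $\|p\|_2\le|\theta|+\a\le 3D$ (using $\a=c\sqrt N\le D$ for $c$ small); thus $x$ lies within $\a/D$ of the explicit vector determined by $(\theta,p)$. Using the incompressibility of $x$ through Lemma \ref{lem:spreadset}, the admissible lattice points $p$ are spread out, and this yields a net $\NN_{D}$ of the set of such $x$ of cardinality at most $(C_{1}D/\sqrt N)^{2N}$ and mesh $\asymp\a/D$. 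If $x\in\ker A$ lies within the mesh of some $u\in\NN_{D}$, then $\|Au\|_{2}=\|A(u-x)\|_{2}\le\|A\|\,(\a/D)\le M\sqrt N\,\a/D$. Now $\|Au\|_{2}^{2}=\sum_{i=1}^{N-m}|(Au)_i|^{2}$ is a sum of $N-m$ independent terms, each coordinate $(Au)_i$ being a linear form in the genuinely complex entries of the $i$th row with coefficient vector $u$; so Theorem \ref{thm:smallball}, in its complex form with $\a=c\sqrt N$ and $\g=c$, bounds the L\'evy concentration function of $(Au)_i$ at a scale $\e$ by $(C\e/c)^{2}+C^{2}e^{-2bc^{2}N}$ as soon as $\e\ge\sqrt2/\LCD_{\a,c}(u)$ --- the square exponent here being precisely the gain furnished by genuine complexity. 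Tensorizing this over the $N-m$ independent rows and union bounding over $\NN_{D}$ then bounds $\Prob\big(\text{some }x\in\ker A\text{ has }\LCD_{\a,c}(x)\in[D,2D)\big)$.

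I expect the main obstacle to be carrying out this union bound and the subsequent summation over the $O(N/m)$ dyadic scales so that the total stays below $e^{-cN}$. The cardinality $(C_{1}D/\sqrt N)^{2N}$ of $\NN_{D}$ grows in $D$, and at the target scale $D_{0}\asymp\sqrt N\,e^{cN/m}$ is of size $e^{\Theta(N^{2}/m)}$ --- super-exponentially large; it must be dominated by the tensorized small-ball factor, which is of order $\big(C\sqrt N/D\big)^{2(N-m)}$ plus the exponentially small contribution inherited from $e^{-2bc^{2}N}$. Balancing the mismatched exponents $2N$ and $2(N-m)$ --- and absorbing the $m$ "excess" factors --- is exactly what forces the $\LCD$ threshold to be of order $\sqrt N\,e^{cN/m}$ and no larger, and it requires a careful choice of the single small constant $c$ (hence of $\a=c\sqrt N$ and $\g=c$), just as in \cite{RVRectangle}; one must also separately treat those $u\in\NN_{D}$ for which $\LCD_{\a,c}(u)$ is too small for Theorem \ref{thm:smallball} to apply at the required scale, folding those contributions --- together with the operator-norm and compressible-vector exceptional events --- into the final $e^{-cN}$. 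The only genuinely new ingredient compared with \cite{RVRectangle} is that the real small-ball estimate is replaced everywhere by its complex square-power analogue from Theorem \ref{thm:smallball}; managing the bookkeeping in this union bound is the delicate step.
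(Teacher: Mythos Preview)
Your proposal is correct and follows essentially the same approach as the paper: reduce to $H^\perp\subseteq\ker B$ for the $(N-m)\times N$ row-matrix, dispose of compressible vectors via Lemma~\ref{lem:compressible}, decompose the incompressible sphere into dyadic LCD level sets $S_D$, cover each $S_D$ by a $(2\alpha/D)$-net of size $D^2(C_0D/\sqrt N)^{2N}$ (this is Lemma~\ref{lem:smallnet}, quoted from \cite{luh2018complex}, whose construction you sketch), apply Theorem~\ref{thm:smallball} coordinatewise and tensorize to get the single-vector bound of Lemma~\ref{lem:singlevector}, then balance net size against the small-ball factor exactly as in Lemma~\ref{lem:lowerboundlevelset} and sum over the dyadic scales. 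Your caveat about net points with too-small LCD is unnecessary, since the net in Lemma~\ref{lem:smallnet} is taken inside $S_D$ itself and hence every net point already has $\LCD_{\alpha,c}\ge D$.
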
 

For now, if we assume this result, we can complete the proof of Theorem \ref{thm:distance}.

\begin{proof}[Proof of Theorem \ref{thm:distance}]
Consider the event 
$$
\EE := \{ \LCD_{\a, c}(H^\perp) \geq c \sqrt{N} e^{cN/m} \}.
$$
By Theorem \ref{thm:structure}, $\P(\EE^c)\leq e^{-cN}$.  We now condition on a realization of $H$ in $\EE$.  By the independence of $H$ and $X$, Theorem \ref{thm:distancegeneral} applied with $\a = c \sqrt{N}$ and $\g = c$ yields
$$
\P \big( \dist(X,H)< \e \sqrt{m} | \EE \big) \leq (C_1 \e)^{2m} + C^m e^{-c_1 N}
$$
for any 
$$
\e > C_2 \sqrt{\frac{m}{N}} e^{-cN/m}.
$$
Since $m \leq \tilde{c}N$, for an appropriate choice of $\tilde{c}$ we have
$$
C_2 \sqrt{\frac{m}{N}} e^{cN/m} \leq \frac{1}{C_1} e^{-c_3 N/m} \quad \text{and } \quad C^m e^{-c_1 N} \leq e^{-c_3 N}.
$$
Thus, for every $\eps > 0$, 
$$
\P\big(\dist(X, H) < \e \sqrt{m} | \EE \big) \leq (C_1 \eps)^{2m} + 2 e^{-c_3 N} \leq (C_1 \eps)^{2m} +  e^{-c_4 N} 
$$
\end{proof}

\subsubsection{Proof of Structure Theorem \ref{thm:structure}}
Throughout the proof we assume that $N > N_0$ for some suitably large number $N_0$ which only depends on the subgaussian moment $B$.  Indeed, the assumption on $m$ implies that $N > 1/\tilde{c}$.  Thus, choosing $\tilde{c}$ small enough, we can make $N_0$ suitably large.

Let $X_1, \dots, X_{N-m}$ denote the independent random vectors that span the subspace $H$.  Consider the $(N-m) \times N$ random matrix $B$ with rows $\bar{X_k}$.  Then 
$$
H^\perp \subseteq \ker(B).
$$
Therefore, for every set $S$ in $\C^N$ we have:
\begin{equation} \label{eq:intersection}
\inf_{x \in S} \|B x\|_2 > 0 \text{ implies } H^\perp \cap S = \emptyset.
\end{equation}
This observation reduces the intersection problem to bounding the infimum of the image of $S$ under $B$.

We now show that a typical subspace is entirely contained in $\Incomp(\d, \rho)$.
\begin{lemma} \label{lem:randomisincompressible}
There exist $\d, \rho \in (0,1)$ such that 
$$
\P \big( H^\perp \cap S^{n-1} \subseteq \Incomp(\d, \rho) \big) \geq 1 - e^{-c N}.
$$
\end{lemma}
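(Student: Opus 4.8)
The plan is to read this off directly from Lemma~\ref{lem:compressible} together with the reduction recorded in \eqref{eq:intersection}. Recall from the setup of the proof that $H^\perp \subseteq \ker(B)$, where $B$ is the $(N-m)\times N$ matrix whose rows are $\bar X_1,\dots,\bar X_{N-m}$. Since the $X_k$ are genuinely complex and independent, and coordinatewise conjugation preserves the genuinely complex property (negating an imaginary part does not change its distributional class), $B$ is itself a genuinely complex random matrix, now of size $(N-m)\times N$. Under the standing hypothesis $1 \le m \le \tilde c N$ with $\tilde c < 1/2$ we have $N-m \ge N/2$, so $B$ has between $N/2$ and $N$ rows and exactly $N$ columns. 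This is precisely the ``wide'' regime covered by Lemma~\ref{lem:compressible} (with the role of the row-count played by $N-m$, the column-count by $N$, and with $\lambda = 0$).

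Concretely, I would fix $M=1$ and apply Lemma~\ref{lem:compressible} to the matrix $B$. This produces constants $\d,\rho,c_3 \in (0,1)$, depending only on the subgaussian moment, such that
\[
  \Prob\Big( \inf_{x \in \Comp(\d,\rho)\cap S_{\C}^{N-1}} \|Bx\|_2 \le c_3\sqrt{N-m} \Big) \le e^{-c_3(N-m)} \le e^{-c_3 N/2}.
\]
On the complementary event one has $\inf_{x\in\Comp(\d,\rho)\cap S_{\C}^{N-1}} \|Bx\|_2 > c_3\sqrt{N-m} > 0$, so \eqref{eq:intersection} applied with $S = \Comp(\d,\rho)\cap S_{\C}^{N-1}$ gives $H^\perp \cap \Comp(\d,\rho)\cap S_{\C}^{N-1} = \emptyset$; equivalently every unit vector of $H^\perp$ is incompressible, i.e. $H^\perp \cap S_{\C}^{N-1} \subseteq \Incomp(\d,\rho)$. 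Taking $c = c_3/2$ yields the asserted probability bound $1 - e^{-cN}$.

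Since all of the genuine work is carried out inside Lemma~\ref{lem:compressible} (and behind it, the standard net argument over sparse unit vectors combined with the single-coordinate small-ball estimate of Lemma~\ref{lem:singlecoordinate} and the operator-norm tail of Proposition~\ref{prop:opnorm}), the only points that require care in writing this out are bookkeeping: verifying that the conjugated, independent rows of $B$ make $B$ genuinely complex, that the dimension inequality $N - m \ge N/2$ follows from $m \le \tilde c N$, and that the infimum appearing in Lemma~\ref{lem:compressible} is understood over compressible vectors \emph{on the unit sphere} (so that the trivial value at $x = 0$ is excluded). I do not expect a real obstacle here; if anything, the subtlest conceptual ingredient is the implication \eqref{eq:intersection}, which has already been observed.
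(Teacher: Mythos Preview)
Your proposal is correct and follows exactly the same route as the paper: apply Lemma~\ref{lem:compressible} to the $(N-m)\times N$ matrix $B$ (using $m\le\tilde c N$ with $\tilde c<1/2$ to ensure $N-m\ge N/2$), and then invoke \eqref{eq:intersection} with $S=\Comp(\delta,\rho)$. Your write-up is in fact slightly more careful than the paper's, since you explicitly track that the lemma yields $c_3\sqrt{N-m}$ and $e^{-c_3(N-m)}$ (and then absorb the factor $1/2$ into $c$) and you verify that conjugating the rows keeps $B$ genuinely complex.
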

\begin{proof}
Since $N-m > (1- \tilde{c}) N$ and $\tilde{c} < 1/2$, we can apply Lemma \ref{lem:compressible} for the matrix $B$. Therefore, there exist $\d, \rho \in (0,1)$ such that 
$$
\P(\inf_{x \in \Comp(\d, \rho)} \|B x\|_2 \geq c_3 \sqrt{N}) \geq 1 - e^{-c_3 N}.  
$$
Thus, by (\ref{eq:intersection}), $H^\perp \cap \Comp(\d, \rho)  = \emptyset$ with probability at least $ 1- e^{-c_3 N}$.
\end{proof}

Fix the values of $\d$ and $\rho$ for the rest of this section.  We decompose the incompressible vectors into level sets, $S_D$ by the value of the essential least common denominator.  For each level set except those where $D$ is exponentially large, we show that $\inf_{x \in S_D} \|Bx\|_2 >0$.  

Let $\a =\mu \sqrt{N}$, where $\mu >0$ is a small number to be chosen later, which depends only on the subgaussian moment $B$.  By Lemma \ref{lem:lowerboundLCD}, 
$$
\LCD_{\a, c} \geq c_0 \sqrt{N} \qquad \text{ for every } x \in \Incomp(\d, \rho).
$$

\begin{definition}[Level Sets]
Let $D \geq c_0 \sqrt{N}$.  Define $S_D \subseteq S_{\C}^{n-1}$ as
$$
S_D := \{x \in \Incomp: D \leq \LCD_{\a, c}(x) < 2D \}.
$$
\end{definition}

We first derive a lower bound for $\|Bx\|_2$ for a fixed vector $x$.
\begin{lemma} \label{lem:singlevector}
Let $x \in S_D$.  Then for every $t > 0$ we have\
\begin{equation} \label{eq:singlevector}
\P(\|Bx\|_2 < t \sqrt{N}) \leq \left( Ct^2 + \frac{C}{D} + C e^{-c \a^2} \right)^{N-m}
\end{equation}
\end{lemma}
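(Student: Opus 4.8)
The plan is to realize $\|Bx\|_2^2 = \sum_{k=1}^{N-m} |\langle \bar X_k, x\rangle|^2$ as a sum of $N-m$ independent nonnegative terms, one for each row of $B$, and to estimate the probability that this sum is small by a tensorization/product argument. First I would observe that for a fixed $x \in S_D \subset S_{\mathbb C}^{n-1}$ the quantities $\langle \bar X_k, x\rangle$, $k = 1, \dots, N-m$, are independent complex random variables (the rows $X_k$ are independent genuinely complex vectors), so it suffices to bound the single-row small-ball probability $\mathbb P(|\langle \bar X_k, x\rangle| < t)$ uniformly in $k$ and then use that, by independence,
\[
\mathbb P\bigl(\|Bx\|_2 < t\sqrt{N}\bigr) \le \mathbb P\bigl(|\langle \bar X_k, x\rangle| < t \text{ for all } k\bigr) \le \Bigl(\sup_k \mathcal L(\langle \bar X_k, x\rangle, t)\Bigr)^{N-m},
\]
up to harmless adjustments (one can also first discard the exponentially unlikely event $\|B\|$ too large and then condition, but the cleaner route is the direct product bound since the rows are genuinely independent).

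The core of the argument is then the single-row small-ball estimate: for $x \in S_D$ and every $t>0$,
\[
\mathcal L(\langle \bar X_k, x\rangle, t) \le Ct^2 + \frac{C}{D} + C e^{-c\alpha^2}.
\]
This is exactly where the complex structure and the definition of $S_D$ enter. Since $x$ is a unit complex vector, I would write $\langle \bar X_k, x \rangle$ in terms of the underlying real coordinates of $X_k$ using the $[\,\cdot\,]$ and $\underline{\,\cdot\,}$ formalism of Lemma \ref{lem:RtoC}, turning $\langle \bar X_k, x\rangle$ into a real random sum $\sum_{j} a_j \xi_{k,j}$ with the $a_j \in \mathbb R^2$ (so $m=2$ in the notation of Theorem \ref{thm:smallball}) determined by the real and imaginary parts of $x$. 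The normalization $\|x\|_2 = 1$ gives the required nondegeneracy hypothesis $\sum_j \langle a_j, y\rangle^2 \ge \|y\|_2^2$ for $y \in \mathbb R^2$, and Lemma \ref{lem:singlecoordinate} (applied to the rescaled coordinates $\xi_{k,j}/2$) gives the $\mathcal L(\xi_{k,j},1)\le 1-b$ input. Applying Theorem \ref{thm:smallball} with $m=2$, $\gamma = c$, and $\alpha = \mu\sqrt{N}$, for $t \gtrsim 1/\LCD_{\alpha,c}(x)$ — which holds since $x \in S_D$ forces $\LCD_{\alpha,c}(x) \ge D$, so the constraint reads $t \gtrsim 1/D$ and is absorbed into the additive $C/D$ term — yields $\mathcal L(\langle \bar X_k, x\rangle, t) \le (C t)^2 + C e^{-2b\alpha^2}$. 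Recasting $t$ appropriately (the statement of Theorem \ref{thm:smallball} measures scale as $\varepsilon\sqrt{m}$ with $m=2$) and combining the threshold condition as the $C/D$ summand gives the claimed bound.

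The main obstacle I anticipate is bookkeeping rather than conceptual: matching the scale conventions between the statement here (where the parameter is $t\sqrt N$ and $x$ lives on the complex sphere) and Theorem \ref{thm:smallball} (stated for real vectors with a factor $\sqrt m$), and verifying that the $\LCD$ threshold $\varepsilon \ge \sqrt m / \LCD_{\alpha,\gamma}(a)$ translates exactly into the additive $C/D$ term so that the bound holds for \emph{every} $t>0$ and not just $t$ above some threshold. The one subtlety worth being careful about is that $\langle \bar X_k, x\rangle$ only involves two "real coordinates worth" of randomness in the $\theta$-variable (reflecting that $x$ is a single complex direction), so $m=2$ is the right dimension to plug into Theorem \ref{thm:smallball}, and one must check the identity $\LCD_{\alpha,c}$ of the relevant $2$-vector sequence coincides with $\LCD_{\alpha,c}(x)$ via the conjugation-invariance observed in the proof of Theorem \ref{thm:distancegeneral}. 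Everything else is a direct substitution into Theorem \ref{thm:smallball} followed by raising to the power $N-m$.
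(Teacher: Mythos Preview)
Your overall approach matches the paper's exactly: bound the single-row small-ball probability $\mathcal L(\langle \bar X_k, x\rangle, t)$ by applying Theorem~\ref{thm:smallball} with $m=2$ (using the $[\,\cdot\,]$, $\underline{\,\cdot\,}$ formalism to convert to a real sum, the nondegeneracy $\sum_j \langle a_j,y\rangle^2 = \|y\|_2^2$ coming from $\|x\|_2=1$, and the $\LCD$ threshold absorbed into the $C/D$ term), and then pass to the $(N-m)$-fold product using independence of the rows.

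There is, however, one step in your sketch that is wrong as written and is \emph{not} a harmless adjustment. The displayed inequality
\[
\mathbb P\bigl(\|Bx\|_2 < t\sqrt{N}\bigr) \le \mathbb P\bigl(|\langle \bar X_k, x\rangle| < t \text{ for all } k\bigr)
\]
is false: $\sum_{k=1}^{N-m} |\langle \bar X_k, x\rangle|^2 < t^2 N$ certainly does not force every summand below $t^2$ (e.g.\ one term could be $\Theta(t^2 N)$ with the others zero). The passage from the single-row bound to the product bound genuinely requires the Tensorization Lemma (Lemma~2.2 of \cite{RVLittlewoodOfford}), which the paper invokes explicitly: from $\mathbb P(|(Bx)_j| < s) \le Cs^2 + C/D + Ce^{-c\alpha^2}$ uniformly in $s$, that lemma gives $\mathbb P\bigl(\sum_j |(Bx)_j|^2 \le t^2(N-m)\bigr) \le (C''t^2 + C''/D + C''e^{-c\alpha^2})^{N-m}$, and then $N-m \ge (1-\tilde c)N$ absorbs the discrepancy between $t^2(N-m)$ and $t^2 N$ into the constants. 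You mention ``tensorization'' in passing, so you evidently have the right tool in mind---just replace the naive product inequality with it and the argument goes through.
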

\begin{proof}
We examine the coordinates of $Bx$.
$$
\underline{(Bx)_j} = [x]^T \underline{B_j^T} = \sum_{k=1}^{2N} \xi_k [x]_j. 
$$
Since $x \in S_\C^{N-1}$, we have 
$\sum_{k=1}^{2N} \< [x]_j, y \> ^2 = \|y\|_2^2 \text{ for every } y \in \R^2$.   
We can apply Theorem \ref{thm:smallball} with $m = 2$. 
$$
\P\big( |(Bx)_j | < t \big) \leq Ct^2 + \frac{C}{D} + C e^{-c \a^2}.
$$ 
Since the rows of $B$ are independent, we can use the Tensorization Lemma 2.2 of \cite{RVLittlewoodOfford} to conclude that
$$
\P \big( \sum_{j=1}^{N-m} |(Bx)_j|^2 \leq t^2 (N-m)  \big) \leq \left(C'' t^2 + \frac{C''}{D} + C'' e^{-c \a^2} \right)^{N-m}.
$$
This completes the proof since $\sum_{j=1}^{N-m} |(Bx)_j|^2 = \|B x\|_2^2$.
\end{proof}

We recall the following bound from \cite{luh2018complex} on the size of an $\e$-net of a level set.

\begin{lemma}[Lemma 5.14, \cite{luh2018complex}] \label{lem:smallnet}
There exists a $(2 \a/D)$-net of $S_D$ of cardinality at most $D^2 (C_0 D/\sqrt{N})^{2N}$.
\end{lemma}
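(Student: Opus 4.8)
The goal is to construct a $(2\alpha/D)$-net of the level set $S_D$ whose cardinality is at most $D^2 (C_0 D/\sqrt{N})^{2N}$. The strategy, following the real argument of Rudelson--Vershynin in \cite{RVRectangle} and its complex adaptation in \cite{luh2018complex}, is to exploit the defining property of $S_D$: if $x \in S_D$, then $\LCD_{\a,c}(x) < 2D$, which by the (geometric form of the) definition of the $\LCD$ means there exists a $\theta = \theta(x) \in \C$ with $|\theta| < 2D$ and a lattice point $p = p(x) \in \Z^{2N}$ such that $\|\underline{\theta x} - p\|_2 < \min(c|\theta x|, \a) \le \a$. Rescaling, $\left\| \underline{x} - \tfrac{1}{\theta} p \right\|_2 < \a/|\theta|$ in the appropriate sense; the upshot is that every $x \in S_D$ lies within distance roughly $\a/D$ of the discrete set of points of the form $p/\theta$ with $p$ a lattice point of controlled norm and $\theta$ in a suitable range. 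The net is then taken to be (a thinning of) this discrete set.

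\textbf{Key steps.} First I would make the rescaling precise: for $x \in S_D$, since $D \le \LCD_{\a,c}(x)$, the parameter $\theta$ realizing the infimum (up to a factor of $2$) satisfies $D \le |\theta| < 2D$, and the lattice point $p$ satisfies $\|\underline{\theta x} - p\|_2 < \a$, hence $\|p\|_2 \le \|\underline{\theta x}\|_2 + \a = |\theta| + \a \le 2D + \a$. Since $\a = \mu\sqrt{N}$ and $D \ge c_0 \sqrt{N}$, we have $\|p\|_2 \le C_0 D$ for a suitable constant $C_0$ depending only on the subgaussian moment $B$. So $p$ ranges over lattice points in $\Z^{2N}$ of norm at most $C_0 D$; the number of such points is at most $(C_1 C_0 D / \sqrt{N})^{2N}$ by the standard volumetric bound on integer points in a Euclidean ball of radius $R$ in $\R^{2N}$, namely at most $(CR/\sqrt{N})^{2N}$ when $R \gtrsim \sqrt{N}$ (here $R = C_0 D \gtrsim c_0 N \ge \sqrt{N}$). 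Second, the parameter $\theta$ itself I would discretize: it suffices to let $\theta$ range over a $1$-net (say) of the annulus $\{D \le |\theta| < 2D\}$ in $\C$, which has cardinality $O(D^2)$; replacing $\theta$ by the nearest net point changes $p/\theta$ by at most $\|p\|_2 \cdot O(1/D^2) = O(C_0 D / D^2) = O(1/D)$, which is absorbed into the net scale $2\a/D$ since $\a \ge 1$. Third, I would assemble the net: set $\NN$ to be the collection of all vectors $\underline{p}/\theta$ (interpreted via the identification $\C^N \cong \R^{2N}$, dividing the complex vector encoded by $p$ by the scalar $\theta$) with $p$ a lattice point of norm $\le C_0 D$ and $\theta$ in the chosen $1$-net of the annulus, then intersect/project this set appropriately so its elements lie on $S_D$ (or within the required distance). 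The total cardinality is at most $D^2 \cdot (C_1 C_0 D/\sqrt{N})^{2N} \le D^2 (C_0' D/\sqrt{N})^{2N}$, and by construction every $x \in S_D$ is within $\a/D + O(1/D) \le 2\a/D$ of $\NN$.

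\textbf{Main obstacle.} The bulk of the argument is bookkeeping, but the step I expect to require the most care is the lattice-point count: one must verify that $\#\{p \in \Z^{2N} : \|p\|_2 \le R\} \le (CR/\sqrt{N})^{2N}$ in the relevant regime $R \gtrsim \sqrt{N}$, where the naive bound $(2R+1)^{2N}$ (coordinatewise) is far too weak and one instead covers the ball by unit cubes and uses $\vol(B_2^{2N}(R)) = \pi^N R^{2N}/N!$ together with Stirling's formula $N! \ge (N/e)^N$. A secondary subtlety is ensuring that the discretization of $\theta$ (and the resulting perturbation of $p/\theta$) genuinely keeps the net within the target radius $2\a/D$ rather than $C\a/D$; this is where we use $\a \ge 1$ (in fact $\a = \mu\sqrt{N} \gg 1$) so that the $O(1/D)$ error from rounding $\theta$ is dwarfed by $\a/D$. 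Finally, one should double-check that points of $\NN$ not lying exactly on $S_D$ can be moved onto $S_D$ (or that the definition of net used downstream only requires covering, not containment), mirroring the standard treatment in \cite{RVRectangle, luh2018complex}; I would simply cite that the proof is the straightforward adaptation of \cite[Lemma 5.14]{luh2018complex}.
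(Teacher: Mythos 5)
Your proof is correct and is essentially the standard argument from \cite[Lemma 5.14]{luh2018complex} (which this paper only cites, without reproducing the proof): use the witness $\theta$ with $D \le |\theta| < 2D$ and the nearby lattice point $p$ of norm $O(D)$ to approximate $x$ by $p/\theta$, count lattice points volumetrically to get the $(C_0D/\sqrt{N})^{2N}$ factor, and discretize $\theta$ over the annulus to get the $D^2$ factor. The only blemish is the typo ``$R = C_0 D \gtrsim c_0 N$'' (it should be $c_0\sqrt{N}$, which is all you need), and the concluding technicalities you flag (absorbing the $O(1/D)$ rounding error using $\a \ge 1$, and moving net points onto $S_D$ at the cost of a constant) are handled exactly as you describe.
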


Using this bound on the net size and our anti-concentration for a single vector, we can generate a lower bound for an entire level set.
\begin{lemma}[Lower bound for a level set] \label{lem:lowerboundlevelset}
There exist $c_1, c_2, \mu \in (0,1)$ such that the following holds.  Let $\a = \mu \sqrt{N} \geq 1$ and $D \leq c_1 \sqrt{N} e^{c_1 N/m}$.  Then
$$
\P \big(\inf_{x \in S_D} \|B x\|_2 < c_2 N/D \big) \leq 2 e^{-N}.
$$
\end{lemma}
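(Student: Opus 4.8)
The plan is to run the standard $\e$-net argument on the level set $S_D$, using the single-vector anti-concentration estimate from Lemma~\ref{lem:singlevector} together with the net-size bound from Lemma~\ref{lem:smallnet}, and then to transfer the bound from net points to all of $S_D$ via the operator norm control of $B$ (Proposition~\ref{prop:opnorm}). First I would fix a net $\NN$ of $S_D$ at scale $2\a/D$ with $|\NN| \le D^2(C_0 D/\sqrt N)^{2N}$, as furnished by Lemma~\ref{lem:smallnet}. For a fixed $x \in S_D$, Lemma~\ref{lem:singlevector} gives
$$
\P\big(\|Bx\|_2 < t\sqrt N\big) \le \Big(Ct^2 + \tfrac{C}{D} + Ce^{-c\a^2}\Big)^{N-m}.
$$
I would choose $t = t_0 := c' \sqrt{N}/D$ for a small constant $c'$ so that $Ct_0^2 = CC'^2 N/D^2 \le \tfrac{1}{10}$ (this uses $D \ge c_0\sqrt N$, giving $N/D^2 \le 1/c_0^2$, so one just needs $c'$ small), note that $C/D \le C/(c_0\sqrt N) \le \tfrac{1}{10}$ for $N$ large, and pick $\mu$ small enough that $Ce^{-c\mu^2 N} \le \tfrac{1}{10}$ for $N \ge N_0$. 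Thus the failure probability for a single net point is at most $(3/10)^{N-m} \le e^{-(N-m)\log(10/3)}$.

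Next I would take the union bound over $\NN$: since $N - m \ge (1-\tilde c)N \ge N/2$, the total probability that some $x \in \NN$ has $\|Bx\|_2 < t_0\sqrt N$ is at most
$$
D^2 \Big(\frac{C_0 D}{\sqrt N}\Big)^{2N} e^{-(N-m)\log(10/3)}.
$$
Here is where the hypothesis $D \le c_1\sqrt N e^{c_1 N/m}$ — and more importantly the constraint $m \le \tilde c N$ lurking in the background via $(C_0 D/\sqrt N)^{2N}$ — has to be used: with $D \le c_1\sqrt N e^{c_1 N/m}$ we get $(C_0 D/\sqrt N)^{2N} \le (C_0 c_1)^{2N} e^{2c_1 N^2/m}$. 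This is the delicate point: the net is huge, roughly $e^{O(N^2/m)}$, so the per-point decay $e^{-cN}$ is not obviously enough. The resolution is that one must instead run the argument with $\a = \mu\sqrt N$ and track that the single-point bound actually decays like $e^{-c\a^2(N-m)} = e^{-c\mu^2 N(N-m)}$ when the $Ce^{-c\a^2}$ term dominates — wait, that term is raised to the power $N-m$, giving $e^{-c\mu^2 N(N-m)} \approx e^{-c\mu^2 N^2/2}$, which does beat $e^{2c_1 N^2/m}$ provided $c_1 \le c_1(m) $ is chosen small relative to $\mu^2 m$; since $m \ge 1$ this forces $c_1 \lesssim \mu^2$, which is permissible. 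So I would be careful to keep the $e^{-c\a^2}$ contribution as a genuine factor of the form $(Ce^{-c\a^2})^{N-m}$ rather than absorbing it prematurely, and choose the constants in the order $\mu$ first, then $c_1 \le c_1(\mu)$, then $N_0$. This gives that with probability at least $1 - e^{-N}$ (after shrinking constants), $\|Bx\|_2 \ge t_0\sqrt N = c'\sqrt N \cdot \sqrt N/D = c' N/D$ for every $x \in \NN$.

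Finally I would do the approximation step. On the event $\|B\| \le C_0' \sqrt N$, which by Proposition~\ref{prop:opnorm} holds with probability at least $1 - e^{-cN}$, any $x \in S_D$ has a net point $y \in \NN$ with $\|x - y\|_2 \le 2\a/D = 2\mu\sqrt N/D$, so
$$
\|Bx\|_2 \ge \|By\|_2 - \|B\|\,\|x-y\|_2 \ge c'\frac{N}{D} - C_0'\sqrt N \cdot \frac{2\mu\sqrt N}{D} = \big(c' - 2C_0'\mu\big)\frac{N}{D}.
$$
Choosing $\mu$ small enough that $2C_0'\mu \le c'/2$ (consistent with the earlier constraints, since $\mu$ was only required to be small) yields $\inf_{x\in S_D}\|Bx\|_2 \ge \tfrac{c'}{2}\cdot\tfrac{N}{D}$, and setting $c_2 = c'/2$ we conclude
$$
\P\Big(\inf_{x \in S_D}\|Bx\|_2 < c_2 N/D\Big) \le e^{-N} + e^{-cN} \le 2e^{-N}
$$
for $N \ge N_0$. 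The main obstacle, as indicated, is the bookkeeping in the union bound: the net $S_D$ has cardinality of order $e^{cN^2/m}$, which is superexponential in $N$, so one cannot afford to lose the $e^{-c\a^2}$-type decay in the single-vector estimate; the order of quantifiers in choosing $\mu$, $c_1$, and $N_0$ must be arranged so that the product of the net size and the per-point probability still decays, and this is exactly what forces the restriction $D \le c_1\sqrt N e^{c_1 N/m}$ with $c_1$ small relative to $\mu^2$.
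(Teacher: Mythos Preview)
Your overall strategy --- $\e$-net from Lemma~\ref{lem:smallnet}, per-point bound from Lemma~\ref{lem:singlevector}, approximation via the operator norm --- is correct, and your choice $t_0 = c'\sqrt N/D$ is exactly the paper's. The gap is in how you process the per-point bound before taking the union bound.

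After writing $\P(\|Bx\|_2 < t_0\sqrt N) \le (Ct_0^2 + C/D + Ce^{-c\a^2})^{N-m}$, you replace the right-hand side by the constant $(3/10)^{N-m}$, throwing away all $D$-dependence. You then correctly realize that this fails against a net of size $D^2(C_0 D/\sqrt N)^{2N}$, and you try to rescue the argument by invoking the $Ce^{-c\a^2}$ term to get decay like $(Ce^{-c\mu^2 N})^{N-m}\approx e^{-c\mu^2 N^2}$. This does not work: $(Ce^{-c\a^2})^{N-m}$ is a \emph{lower} bound for the sum raised to the power $N-m$, not an upper one. Concretely, take an intermediate level such as $D = \sqrt N\,e^{\sqrt N}$ (which is in range once $m \lesssim c_1\sqrt N$): then $Ct_0^2 \asymp e^{-2\sqrt N}$ is far larger than $Ce^{-c\mu^2 N}$, so the per-point bound is only $\approx e^{-2\sqrt N(N-m)}$, while the net size is $\approx e^{2N^{3/2}}$. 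Your constant bound $(3/10)^{N-m}$ loses by a factor $e^{2N^{3/2}}$, and your ``fix'' is simply not an upper bound in this regime.

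The mechanism you are missing is that the $Ct_0^2$ term itself carries exactly the right $D$-dependence to cancel the net. With the constants chosen in the order the paper uses ($\nu$ small, $\mu = \nu/(9K)$, $c_1 \le c\mu^2$, and $N \ge \nu^{-2}$), one checks that $Ct_0^2 = C\nu^2 N/D^2$ dominates the other two summands for every $D$ in the allowed range (the hypothesis $D \le c_1\sqrt N e^{c_1 N/m}$ is used precisely here, to keep $Ct_0^2 \ge Ce^{-c\mu^2 N}$). Hence the per-point bound is $\le (3C\nu\sqrt N/D)^{2(N-m)}$, and the union bound gives
\[
D^2\Big(\frac{C_0 D}{\sqrt N}\Big)^{2N}\Big(\frac{3C\nu\sqrt N}{D}\Big)^{2(N-m)}
\;\le\; C_1^{2N}\,D^2\,\nu^{2(N-m)}\Big(\frac{D}{\sqrt N}\Big)^{2m},
\]
where $C_1 = 3CC_0$. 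The factors $(\sqrt N/D)^{2(N-m)}$ and $(D/\sqrt N)^{2N}$ cancel down to $(D/\sqrt N)^{2m}$, which by the hypothesis on $D$ is at most $c_1^{2m}e^{2c_1 N}$ --- merely exponential in $N$. Now a small enough $\nu$ beats all the constants, and your approximation step finishes the proof exactly as you wrote it.
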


\begin{proof}
By Proposition \ref{prop:opnorm}, there exists $K \geq 1$ such that 
$$
\P(\|B\| > K \sqrt{N}) \leq e^{-N}.
$$
To complete the proof, it suffices to find $\nu > 0$ which depends only on $B$ such that the event
$$
\EE := \left \{ \inf_{x \in S_D} \|B x\|_2 < \frac{\nu N}{2D} \text{ and } \|B\| \leq K \sqrt{N} \right \}
$$
has probability at most $e^{-N}$.

We verify that this holds with the following choice of parameters:
$$
\nu = \frac{1}{(3CC_0)^3 e}, \qquad \mu = \frac{\nu}{9K}, \qquad c_1 = c \mu^2 \leq \nu.
$$

Choosing $\tilde{c}$ in the statement of Theorem \ref{thm:structure} to be sufficiently small, we can assume that $N > \nu^{-2}$.  We apply Lemma \ref{lem:singlevector} with $t = \nu \sqrt{N}/D$.  By our choice of parameters, the $Ct$ term dominates in the right hand side of (\ref{eq:singlevector}).  Therefore, for $x_0 \in S_D$, 
$$
\P\left(\|B x_0\|_2 < \frac{\nu N}{D} \right)  \leq \left(\frac{3 C \nu \sqrt{N}}{D} \right)^{2 (N-m)}.
$$
By Lemma \ref{lem:smallnet}, there exists a $(2\a/D)$-net, $\NN$, of size at most $D^2 (C_0 D/\sqrt{N})^{2N}$
$$
p := \P\Big ( \inf_{x_0 \in \NN} \|B x_0\|_2 < \frac{\nu N}{D} \Big) \leq D^2 (C_0 D/\sqrt{N})^{2N}  \left(\frac{3 C \nu \sqrt{N}}{D} \right)^{2 (N-m)}.
$$
Denote $C_1 := 3 C C_0$.  
$$ 
p \leq C_1^{2N} D^2  \left( \frac{D}{\sqrt{N}} \right)^{2m} \nu^{2(N-m)} \leq C_1^{2N} D^2 (\nu e^{\nu N/m})^{2m}\nu^{N-m} \leq C_1^{3N} \nu^N = e^{-N}.
$$
We assume that $\EE$ occurs.  Fix a $x \in S_D$ for which $\|Bx\|_2 < \frac{\nu N}{2D}$.  There exists an element $x_0 \in \NN$ such that $\|x  - x_0\|_2 \leq \frac{2 \mu \sqrt{N}}{D}.$  Therefore, by the trianlge inequality,
$$
\|B x_0\|_2 \leq \|B x\|_2 + \|B\| \|x - x_0\|_2 \leq \frac{\nu N}{2D} + K \sqrt{N} \frac{2 \mu \sqrt{N}}{D} < \frac{\nu N}{D}.
$$
\end{proof}

\begin{proof}[Proof of Theorem \ref{thm:structure}]
Consider $x \in S^{N-1}_\C$ such that
$$
\LCD_{\a, c}(x) < c_1 \sqrt{N} e^{c_1N/m},
$$
where $c_1$ is the contant from Lemma \ref{lem:lowerboundlevelset}.  Either $x$ is compressible or $x \in S_D$ for some $D \in \DD$, where
$$
\DD := \{D: c_0 \sqrt{N} \leq D < c_1 \sqrt{N} e^{c_1 N/m}, D = 2^k, k \in \N \}.  
$$
We can now decompose the desired probability as
\begin{align*}
p &:= \P\big(\LCD_{\a, c}(H^\perp) < c_1 \sqrt{N} e^{c_1 N/m} \big) \\
&\leq \P(H^\perp \cap \Comp \neq \emptyset) + \sum_{D \in \DD} \P(H^\perp \cap S_D \neq \emptyset).
\end{align*}

By Lemma \ref{lem:randomisincompressible}, the first term on the right is bounded by $e^{-c N}$.  By Lemma \ref{lem:lowerboundlevelset} each term in the summation on the right can be bounded by $2e^{-N}$.  Since $|\DD| \leq C' N$, we have
$$
p \leq e^{-c N} + C' N e^{-N} \leq e^{-c'N}.
$$ 
\end{proof}

\subsection{Invertibility via uniform distance bounds}
The remainder of the proof is identical to \cite{RVRectangle} and is included with the obvious modifications for the reader's convenience.  
We first make several reductions.  Without loss of generality, we may assume that our random variables have an absolutely continuous distribution.  Indeed, we can add to each entry an independent complex gaussian random variable with small variance $\sigma$ and later let $\sigma$ tend to zero.  

Let $N = n-1+d$ for some $d \geq 1$.  We can assume that 
\begin{equation} \label{eq:sizeofd}
1 \leq d \leq c_0 n,
\end{equation}
as when $d$ is above a constant proportion of $n$, our matrix is sufficiently rectangular for a simple epsilon argument (cf. Introduction of \cite{RVRectangle}).  Note that
$$
\sqrt{N} - \sqrt{n-1} \leq \frac{d}{ \sqrt{n}}.
$$
Therefore, 
\begin{multline}                        \label{two terms}
  \P \Big( s_n(A - \l) \le \e \big (\sqrt{N} - \sqrt{n-1} \, \big) \Big)
  \le \P \big( s_n(A-\l) \le \e \frac{d}{\sqrt{n}} \big) \\
  \le \P \big( \inf_{x \in \Comp(\d,\rho)} \|(A -\l) x\|_2 \le \e \frac{d}{\sqrt{n}} \big)
    + \P \big( \inf_{x \in \Incomp(\d,\rho)} \|(A- \l) x\|_2 \le \e \frac{d}{\sqrt{n}} \big).
\end{multline}
We can conclude from Lemma \ref{lem:compressible} that 
\begin{equation}\label{eq: compressible}
  \P \Big( \inf_{x \in \Comp(\d,\rho)} \|(A- \l) x\|_2 \le\e \frac{d}{\sqrt{n}} \Big)
  \le e^{-c_3 N}.
\end{equation}
Therefore, in this section, we focus on a lower bound for incompressible vectors.

Let $X_1, \dots, X_{n} \in \C^{N}$ be the columns of the matrix $A$.  Given a subset $J \subseteq [n]$ of cardinality $d$, we consider the subspace 
$$
H_J := \Span(X_k )_{k \in J} \subseteq \C^{N}.
$$
For levels $K_1, K_2 > 0$ that only depend on $\d, \rho$, we define the set of totally spread vectors
\begin{equation}                        \label{SJ}
  \Spread_J:= \Big\{ y \in S(\C^J) : \;
    \frac{K_1}{\sqrt{d}} \le |y_k| \le \frac{K_2}{\sqrt{d}}
    \quad \text{for all $k \in J$} \Big\}.
\end{equation}

In the following lemma, we let $J$ be a random subset uniformly distributed
over all subsets of $[n]$ of cardinality $d$. To avoid
confusion, we often denote the probability and expectation over
the random set $J$ by $\P_J$ and $\E_J$, and with respect to the
random matrix $A$ by $\P_A$ and $\E_A$.

\begin{lemma}[Total spread]                     \label{lem:totalspread}
  For every $\d,\rho \in (0,1)$,
  there exist $K_1, K_2, c_0 > 0$ which depend only on $\d,\rho$,
  and such that the following holds.
  For every $x \in \Incomp(\d,\rho)$, the event
  $$
  \EE(x) : = \Big\{ \frac{P_J x}{\|P_J x\|_2} \in \Spread_J
  \quad \text{and} \quad
  \frac{\rho \sqrt{d}}{\sqrt{2n}} \le \norm{P_J x}_2
  \le \frac{\sqrt{d}}{\sqrt{\d n}}
  \Big\}
  $$
  satisfies $\P_J(\EE(x)) > c_0^d$.
\end{lemma}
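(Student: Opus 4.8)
The plan is to reduce the statement to a single clean combinatorial estimate by conditioning on the event that the random coordinate set $J$ falls inside a fixed ``spread set'' of $x$. Fix a unit vector $x\in\Incomp(\d,\rho)$. Following the standard spread-set construction underlying Lemma~\ref{lem:spreadset}, but keeping the constants explicit, I would set
\[
\sigma=\sigma(x):=\Big\{k\in[n]:\ \frac{\rho}{\sqrt{2n}}\le |x_k|\le \frac{1}{\sqrt{\d n}}\Big\}
\]
and verify that $|\sigma|\ge \nu_1 n$ with $\nu_1:=\d\rho^2/2$. Indeed, the coordinates with $|x_k|<\rho/\sqrt{2n}$ carry squared mass at most $\rho^2/2$; the coordinates with $|x_k|>1/\sqrt{\d n}$ number fewer than $\d n$ (because $\|x\|_2=1$), hence form a sparse set, so incompressibility forces $x$ restricted to the complement of that set to have squared norm exceeding $\rho^2$; subtracting, $\|x_\sigma\|_2^2>\rho^2/2$, and since each coordinate of $x$ on $\sigma$ has modulus at most $1/\sqrt{\d n}$ this gives $|\sigma|\ge\d\rho^2 n/2$.

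The second step is the deterministic inclusion $\{J\subseteq\sigma\}\subseteq\EE(x)$. If $J\subseteq\sigma$ with $|J|=d$, then summing the two-sided bound $\rho/\sqrt{2n}\le|x_k|\le 1/\sqrt{\d n}$ over $k\in J$ yields $\rho^2 d/(2n)\le\|P_J x\|_2^2\le d/(\d n)$, i.e.\ $\frac{\rho\sqrt d}{\sqrt{2n}}\le\|P_J x\|_2\le\frac{\sqrt d}{\sqrt{\d n}}$, which is exactly the norm condition in $\EE(x)$; dividing the pointwise bound on $|x_k|$ through by these norm bounds gives $\frac{K_1}{\sqrt d}\le\frac{|x_k|}{\|P_J x\|_2}\le\frac{K_2}{\sqrt d}$ for every $k\in J$, with $K_1:=\rho\sqrt\d/\sqrt2$ and $K_2:=\sqrt2/(\rho\sqrt\d)$, so $P_J x/\|P_J x\|_2\in\Spread_J$. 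It then only remains to bound $\P_J(J\subseteq\sigma)$ from below. Since $J$ is uniform over the $d$-subsets of $[n]$ and $1\le d\le c_0 n$ by the standing assumption in the proof of Theorem~\ref{thm:rectangular},
\[
\P_J(J\subseteq\sigma)=\frac{\binom{|\sigma|}{d}}{\binom{n}{d}}=\prod_{j=0}^{d-1}\frac{|\sigma|-j}{n-j}\ \ge\ \Big(\frac{|\sigma|-d}{n}\Big)^{d}\ \ge\ (\nu_1-c_0)^{d}\ \ge\ (\nu_1/2)^{d}
\]
provided $c_0\le\nu_1/2$. Thus $\P_J(\EE(x))\ge\P_J(J\subseteq\sigma)>c_0^{d}$ as soon as $c_0$ is fixed, say, equal to $\d\rho^2/8$; the lemma then holds with this $c_0$ together with $K_1,K_2$ as above, all depending only on $\d$ and $\rho$.

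The only genuine point of care, and the step I would scrutinize, is lining up the numerical thresholds in the definition of $\sigma$ with the constants $\rho/\sqrt2$ and $1/\sqrt\d$ that appear in $\EE(x)$: invoking Lemma~\ref{lem:spreadset} as a black box returns unspecified constants $\nu_2,\nu_3$ that need not reproduce the stated form of $\EE(x)$, so it is cleanest to rerun the (short) spread-set argument directly, as above. A second and milder point is that the bound $\P_J(J\subseteq\sigma)\ge c_0^{d}$ — and indeed the assertion $\P_J(\EE(x))>c_0^{d}$ itself — is only meaningful when $d$ is at most a small constant multiple of $n$, which is precisely the regime $1\le d\le c_0 n$ already imposed in the proof of Theorem~\ref{thm:rectangular}.
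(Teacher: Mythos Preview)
Your proof is correct and follows essentially the same approach as the paper: both reduce to the event $\{J\subseteq\sigma\}$ for the spread set $\sigma$ of $x$, then bound $\binom{|\sigma|}{d}/\binom{n}{d}$ from below (the paper via Stirling to get $(\nu_1/e)^d$, you via the direct product bound). Your version is in fact slightly more careful than the paper's, which invokes Lemma~\ref{lem:spreadset} as a black box without checking that the unspecified constants $\nu_2,\nu_3$ there reproduce the explicit thresholds $\rho/\sqrt{2}$ and $1/\sqrt{\d}$ in the statement of $\EE(x)$; you correctly flag and repair this by rerunning the spread-set argument with those specific thresholds.
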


\begin{proof}
Let $\sigma \subset [n]$ be the subset from Lemma \ref{lem:spreadset}.  By choosing $c_0$ sufficiently small in (\ref{eq:sizeofd}), we may assume that $d \leq |\sigma|/2$.  By Stirling's approximation,
$$
\P_J ( J \subset \sigma) = \binom{|\sigma|}{d} \Big / \binom{n}{d} > \left( \frac{\nu_1}{e} \right)^d = c_0^d.
$$ 
Lemma \ref{lem:spreadset} also provides the two-sided bound on $\|P_J x\|_2$.  Thus, we can set $K_1 = \nu_2/ \nu_3$ and $K_2 = 1/K_1$.
\end{proof}

We recall the following lemma from \cite{RVRectangle}.  Although the lemma in \cite{RVRectangle} is stated for real vector spaces, the same proof carries over for complex vector spaces.  
\begin{lemma}[Lemma 6.2, \cite{RVRectangle}] \label{lem:inverttodistance}
There exist $C_1, c_1 >0$ which depend only on $\d, \rho$, and such that the following holds.  Let $J$ be any $d$-element subset of $[n]$.  Then for every $\e > 0$
\begin{equation} \label{eq:distancetospan}
\P\Big( \inf_{x \in \Incomp(\d, \rho)} \|(A- \lambda) x\|_2 < c_1 \e \sqrt{\frac{d}{n}} \Big) \leq C_1^d \cdot \P \big( \inf_{z \in \Spread_J} dist((A - \lambda)z, H_{J^c}) < \e\big).
\end{equation}
\end{lemma}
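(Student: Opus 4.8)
The plan is to follow the strategy of Rudelson and Vershynin: first condition on $A$, extract a single near-minimizing incompressible vector, and then randomize over the choice of coordinate subset so that the combinatorial input of Lemma~\ref{lem:totalspread} can be applied to that fixed vector. Concretely, fix a realization of $A$ and suppose the event $\mathcal{E}_0 := \{\, \inf_{x\in\Incomp(\d,\rho)} \|(A-\lambda)x\|_2 < c_1\e\sqrt{d/n} \,\}$ holds; then there is a (deterministic, given $A$) vector $x = x(A) \in \Incomp(\d,\rho)$ with $\|(A-\lambda)x\|_2 < c_1\e\sqrt{d/n}$. Let $J$ be a uniformly random $d$-element subset of $[n]$, independent of $A$. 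Applying Lemma~\ref{lem:totalspread} to this \emph{fixed} $x$, with $\P_J$-probability at least $c_0^d$ the event $\mathcal{E}(x)$ occurs, and on $\mathcal{E}(x)$ the vector $z := P_J x/\|P_J x\|_2$ lies in $\Spread_J$ while $\|P_J x\|_2 \ge \rho\sqrt{d}/\sqrt{2n}$.

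Next comes the geometric step. On $\mathcal{E}(x)$, decompose $x = P_J x + P_{J^c} x$; since $(A-\lambda)P_{J^c}x$ is a linear combination of the columns of $A-\lambda$ indexed by $J^c$, it lies in $H_{J^c}$. Because the distance to a subspace is unchanged under translation by a subspace element and scales linearly,
\[
\dist\big((A-\lambda)z,\, H_{J^c}\big)
= \frac{\dist\big((A-\lambda)P_J x,\, H_{J^c}\big)}{\|P_J x\|_2}
= \frac{\dist\big((A-\lambda)x,\, H_{J^c}\big)}{\|P_J x\|_2}
\le \frac{\|(A-\lambda)x\|_2}{\|P_J x\|_2}
< \frac{c_1\e\sqrt{d/n}}{\rho\sqrt{d}/\sqrt{2n}}
= \frac{\sqrt{2}\,c_1}{\rho}\,\e .
\]
Choosing $c_1 := \rho/\sqrt{2}$ makes the right-hand side equal to $\e$; hence, for the fixed realization of $A$, the event $\mathcal{E}(x)$ implies $\inf_{z\in\Spread_J}\dist((A-\lambda)z, H_{J^c}) < \e$, so (still for fixed $A$) $\P_J\big(\inf_{z\in\Spread_J}\dist((A-\lambda)z, H_{J^c}) < \e\big) \ge \P_J(\mathcal{E}(x)) > c_0^d$.

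It remains to average over $A$ and pass to a fixed subset. Write $\mathcal{F}(A,J) := \{\inf_{z\in\Spread_J}\dist((A-\lambda)z, H_{J^c}) < \e\}$. Integrating the bound of the previous paragraph over the realizations of $A$ and using Fubini,
\[
c_0^d\,\P_A(\mathcal{E}_0)
\ \le\ \E_A\big[\mathbbm{1}_{\mathcal{E}_0}\,\P_J(\mathcal{F}(A,J))\big]
\ =\ \E_J\,\E_A\big[\mathbbm{1}_{\mathcal{E}_0}\mathbbm{1}_{\mathcal{F}(A,J)}\big]
\ \le\ \E_J\,\P_A\big(\mathcal{F}(A,J)\big).
\]
By symmetry of the construction under relabelling the columns, $\P_A(\mathcal{F}(A,J))$ does not depend on the particular $d$-set $J$, so the last expression equals $\P_A(\mathcal{F}(A,J))$ for the fixed $J$ of the statement; dividing by $c_0^d$ and setting $C_1 := 1/c_0$ finishes the proof. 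This is precisely the reduction that later allows one to replace the continuum $\Incomp(\d,\rho)$ by an $\e$-net of the much tamer set $\Spread_J$ and then invoke the distance-to-a-random-subspace bound (Theorem~\ref{thm:distance}).

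The measurable selection of $x(A)$ is routine: intersect $\Incomp(\d,\rho)$ with a countable dense set, or work with near-minimizers and let the slack tend to zero. The step I expect to be the real crux is the displayed inequality above: the reason one demands that $P_J x$ be \emph{totally spread with $\|P_J x\|_2 \gtrsim \sqrt{d/n}$} --- exactly the content of $\mathcal{E}(x)$ in Lemma~\ref{lem:totalspread} --- is that this converts the bound $\|(A-\lambda)x\|_2 < c_1\e\sqrt{d/n}$ on a spread-out vector into a bound of order $\e$ on the distance from the image of a genuine \emph{unit} vector $z$ to $H_{J^c}$, at the cost of only the factor $c_0^{-d} = C_1^d$ in probability.
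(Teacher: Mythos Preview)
Your argument is exactly the Rudelson--Vershynin approach that the paper invokes without reproving, so the strategy is right. Two points of execution deserve care, both stemming from the shift $\lambda$ and the looser independence hypothesis used in this paper.

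First, the displayed identity relies on $(A-\lambda)P_{J^c}x \in H_{J^c}$. With the paper's literal definition $H_{J^c}=\Span(X_k)_{k\in J^c}$ (columns of $A$, not of $A-\lambda$), this fails for $\lambda\ne 0$: one only has $AP_{J^c}x\in H_{J^c}$, while the extra term $-\lambda\sum_{k\in J^c}x_k e_k$ need not lie there. Your identity is correct if $H_{J^c}$ is taken to be the span of the columns of $A-\lambda$ indexed by $J^c$; this is the reading under which the lemma is actually true, and Theorem~\ref{thm:uniformdistance} goes through with it as well (Theorem~\ref{thm:distance} and the structure argument permit non-centered spanning vectors). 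So this is a definitional wrinkle in the paper rather than a flaw in your reasoning, but you should flag which convention you are using.

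Second, the final symmetry claim --- that $\P_A(\mathcal{F}(A,J))$ is independent of the $d$-set $J$ --- is not available here: the columns of $A-\lambda$ are not exchangeable because of the diagonal shift, and the ``genuinely complex'' hypothesis does not require identically distributed entries. Without symmetry your Fubini chain only yields
\[
\P_A(\mathcal{E}_0)\ \le\ c_0^{-d}\,\E_J\,\P_A(\mathcal{F}(A,J))\ \le\ c_0^{-d}\max_{J}\P_A(\mathcal{F}(A,J)),
\]
i.e.\ the bound for \emph{some} $J$ rather than every $J$. This is harmless for the intended application, since Theorem~\ref{thm:uniformdistance} bounds the right-hand side uniformly in $J$, but it does not quite deliver the lemma in the ``any $J$'' form stated.
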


\subsection{Uniform distance bound}
In this section we bound the probability in the right hand side of (\ref{eq:distancetospan}) following \cite{RVRectangle}.
\begin{theorem}[Uniform distance bound] \label{thm:uniformdistance}
For every $t>0$,
$$
\P \Big( \inf_{z \in \Spread_J} \dist ((A - \l)z, H_{J^c}) < t \sqrt{d} \Big) \leq (Ct)^{2d-1} + e^{-cN}.
$$
\end{theorem}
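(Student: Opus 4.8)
\emph{Proof plan.} The plan is to mirror the corresponding argument of \cite{RVRectangle}. Fix the $d$-element set $J\subset[n]$, recall $N=n-1+d$, so $H_{J^c}$ has codimension $2d-1$ in $\C^N$, and let $P$ denote the orthogonal projection of $\C^N$ onto $H_{J^c}^{\perp}$. First I would condition on the columns $(X_k)_{k\in J^c}$ of $A$, which fixes $H_{J^c}$ and $P$; the remaining randomness sits in the columns $(X_k)_{k\in J}$, independent of $P$. Since $H_{J^c}$ is spanned by $n-d=N-(2d-1)$ genuinely complex vectors and $2d-1\le\tilde cN$ (by \eqref{eq:sizeofd} after shrinking $c_0$), Theorem \ref{thm:structure} with $m=2d-1$ gives, outside an event $\EE_1$ with $\P(\EE_1)\le e^{-cN}$, the bound $\LCD_{\a,c}(H_{J^c}^{\perp})\ge D_0:=c\sqrt N\,e^{cN/(2d-1)}$ for $\a=c\sqrt N$. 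On $\EE_1^c$, for each fixed $z\in\Spread_J$ I would estimate $\dist((A-\l)z,H_{J^c})=\|P(A-\l)z\|_2$ by running the proof of Theorem \ref{thm:distancegeneral} for the vector $(A-\l)z=\sum_{k\in J}z_k(X_k-\l e_k)$, whose coordinates are independent; after passing to real coordinates, the one hypothesis of the underlying Theorem \ref{thm:smallball} that needs re-checking is the normalization, and it holds because $z$ is a unit vector, so $\sum_{k\in J}[z_k]^{T}[z_k]=\bigl(\sum_{k\in J}|z_k|^2\bigr)I=I$. This yields, for every $\e>0$ (the lower constraint on $\e$ being vacuous as $D_0$ is exponentially large),
\[
  \P\bigl(\dist((A-\l)z,H_{J^c})<\e\sqrt{2d-1}\bigr)\le (C_1\e)^{2(2d-1)}+e^{-c_1N}.
\]

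The crucial ingredient will be an operator-norm estimate for the \emph{reduced} matrix: outside an event $\EE_2$ of small probability, $\|P(A-\l)_J\|\le K\sqrt d$, where $(A-\l)_J$ is the $N\times d$ submatrix of columns indexed by $J$ and $K$ depends only on $B$ and $M$. What makes this hold is that $P$ has rank $2d-1$: writing $P(A-\l)_J=PA_J-\l PE_J$ with $E_J\colon\C^J\hookrightarrow\C^N$ the coordinate embedding, the columns of $PA_J$ are independent, $O(1)$-subgaussian (the constant depending on $B$), and isotropic in the $(2d-1)$-dimensional space $H_{J^c}^{\perp}$ (isotropy being again $\sum_i[Pe_i][Pe_i]^{T}=I$), so a standard $\e$-net bound over $S(\C^J)\times S(H_{J^c}^{\perp})$ gives $\|PA_J\|\le K_1\sqrt d$; and $\|\l PE_J\|=|\l|\,\|PE_J\|\le M\sqrt N\,\|PE_J\|$, where $\|PE_J\|$ is the largest cosine of a principal angle between the fixed $d$-dimensional coordinate subspace $\C^J$ and the random $(2d-1)$-dimensional subspace $H_{J^c}^{\perp}$, of order $\sqrt{d/N}$, whence $\|\l PE_J\|\le K_2\sqrt d$. (For $d$ below a fixed threshold these estimates a priori hold only with probability $1-e^{-cd}$, and that regime would need a small supplementary argument or a monotonicity reduction; I regard this as routine bookkeeping rather than a conceptual difficulty.)

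With these two ingredients the union bound closes the estimate. Put $\e_0:=2t\sqrt d/\sqrt{2d-1}$ and $\eta:=t/(2K)$, and by Proposition \ref{prop:nets} take an $\eta$-net $\NN$ of $\Spread_J\subset S_{\C}^{d-1}$ with $|\NN|\le 4d(1+2/\eta)^{2d-1}$. On $\EE_1^c\cap\EE_2^c$, if $\inf_{z\in\Spread_J}\dist((A-\l)z,H_{J^c})<t\sqrt d$, pick a near-minimizer $z^\ast$ and $z'\in\NN$ with $\|z^\ast-z'\|_2\le\eta$; then
\[
  \dist((A-\l)z',H_{J^c})\le \dist((A-\l)z^\ast,H_{J^c})+\|P(A-\l)_J\|\,\|z^\ast-z'\|_2<t\sqrt d+K\sqrt d\,\eta\le\tfrac32 t\sqrt d<\e_0\sqrt{2d-1},
\]
so a union over $\NN$ together with the pointwise bound gives
\[
  \P\bigl(\EE_1^c\cap\EE_2^c\cap\{\,\inf_{z\in\Spread_J}\dist((A-\l)z,H_{J^c})<t\sqrt d\,\}\bigr)\le 4d(1+4K/t)^{2d-1}\bigl[(C_1\e_0)^{2(2d-1)}+e^{-c_1N}\bigr].
\]
Since $\e_0\le 2t$, $(1+4K/t)^{2d-1}\le(5K/t)^{2d-1}$ for $t\le K$ (the assertion being trivial for $t>K$), and $(d/(2d-1))^{2d-1}\le1$, the first term is at most $4d(5K)^{2d-1}(2C_1)^{4d-2}t^{2d-1}\le(C_2t)^{2d-1}$ after absorbing $4d$ into the base; the second term is $\le e^{-c_2N}$ once the range of exponentially small $t$ is handled by monotonicity in $t$. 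Adding $\P(\EE_1)+\P(\EE_2)$ completes the argument.

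I expect the operator-norm estimate of the second paragraph to be the main obstacle — together with the recognition that it is needed at all. A net of $\Spread_J$ at the scale $t\sqrt d/(M\sqrt N)$ dictated by the crude bound $\|(A-\l)_J\|\lesssim M\sqrt N$ would have cardinality $\sim(M\sqrt N/(t\sqrt d))^{2d-1}$, and multiplying by the pointwise probability $\sim t^{2(2d-1)}$ leaves a spurious factor $(N/d)^{(2d-1)/2}$; one must instead net at the coarse scale $\eta\asymp t$, so that $|\NN|\sim(C/t)^{2d-1}$ combines with the pointwise exponent $2(2d-1)$ to give exactly $(Ct)^{2d-1}$. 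This coarse scale is affordable only because $P$ has rank $2d-1$, forcing $\|P(A-\l)_J\|$ to sit at scale $\sqrt d$ rather than $\sqrt N$. Establishing that low-rank operator bound — and with a probability guarantee uniform in $d$ — is where the real work lies; the rest is assembling Theorems \ref{thm:structure}, \ref{thm:distancegeneral}, and \ref{thm:smallball} with Proposition \ref{prop:nets}.
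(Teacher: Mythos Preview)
Your overall strategy is right --- condition on $H_{J^c}$, invoke the structure theorem, net $\Spread_J$ at scale $\asymp t$, and apply the small-ball bound pointwise --- and you have correctly isolated the decisive point: the net argument only closes because $\|W\|$ lives at scale $\sqrt d$, not $\sqrt N$, where $W=PA_J$. (A small simplification: the $\l$-shift is more cleanly absorbed into the constant vector $w$ via $\dist((A-\l)z,H_{J^c})=\|Wz-w\|_2$ with $w=P(\l z)$; the small-ball bound is shift-invariant, so no separate estimate on $\|\l PE_J\|$ is needed.)

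The gap is your treatment of the event $\EE_2=\{\|W\|>K\sqrt d\}$. A subgaussian net argument over $S(\C^J)\times S(H_{J^c}^{\perp})$ gives only $\P(\EE_2)\le e^{-cd}$, and this additive term is \emph{not} absorbable into the target $(Ct)^{2d-1}+e^{-cN}$. The issue is not confined to ``$d$ below a fixed threshold'': for every $d=o(N)$ --- say $d=\sqrt N$ or $d=\log N$ --- the term $e^{-cd}$ swamps $e^{-cN}$, and for small $t$ it swamps $(Ct)^{2d-1}$ as well. No monotonicity-in-$t$ reduction can help, since the bound you produce at any single scale already carries the bad $e^{-cd}$. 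When the theorem is fed into Lemma~\ref{lem:inverttodistance}, this term gets multiplied by $C_1^d$ and becomes $O(1)$, destroying Theorem~\ref{thm:rectangular}.

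The paper's resolution is a decoupling trick (Proposition~\ref{prop:decoupling}) together with a dyadic decomposition in $\|W\|$. Rather than conditioning once on $\{\|W\|\le K_0\sqrt d\}$, one slices into shells $\{sK_0\sqrt d<\|W\|\le 2sK_0\sqrt d\}$ for $s=2^k$, $k\ge 0$, up to the level where $sK_0\sqrt d\asymp\sqrt N$ (beyond which $\|W\|\le\|A\|$ and Proposition~\ref{prop:opnorm} give the honest $e^{-cN}$). On each shell, column independence of $W$ (conditionally on $H_{J^c}$) factors the joint event:
\[
\P\bigl(\|Wz\|_2<t\sqrt d,\ \|W\|>sK_0\sqrt d\bigr)\ \lesssim\ \sup_{x,w}\P\bigl(\|Wx-w\|_2<Ct\sqrt d\bigr)\cdot\P\bigl(\|W\|>cs\sqrt d\bigr),
\]
the first factor being the small-ball bound $(Ct)^{2(2d-1)}$ and the second being $e^{-cs^2 d}$. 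After the union over a net of mesh $\asymp t/s$ this yields $(Cte^{-cs^2})^{2d-1}$ per shell; summing over dyadic $s$ gives $(Ct)^{2d-1}$. The point is that the weak tail $e^{-cs^2 d}$ is never added --- it is \emph{multiplied} against the small-ball probability, which is what makes it summable. This decoupling step is the substantive content you are missing; it is not bookkeeping.
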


Since $H_{J^c}$ is the span of $n-d$ independent random vectors and the distribution of the vectors is uniformly continuous, we can assume that
$$
\dim (H_{J^c}) = n-d.
$$
Without loss of generality, in the proof of Theorem \ref{thm:uniformdistance}, we can assume that 
\begin{equation} \label{eq:t}
t \geq t_0 = e^{-\bar{c}N/d}.
\end{equation}
Let us now represent the distance problem in matrix notation. Let $P$ be the orthogonal projection in $\C^N$ onto $(H_{J^c})^\perp$, and let 
\begin{equation} \label{eq:W}
W := PA |_{\C^J}.
\end{equation}
Then for every $v \in \C^N$, the following identity holds: 
\begin{equation} \label{eq:distanceW}
\dist((A-\lambda)z, H_{J^c} + v) = \|Wz - w\|_2, \qquad \text{ where } w = P(v+ \lambda z).
\end{equation}

We omit the standard proof to the following proposition.
\begin{proposition}[Proposition 7.3, \cite{RVLittlewoodOfford}] \label{prop:normW}
$$
\P(\|W\| > t \sqrt{d}) \leq e^{-c_0 t^2 d} \qquad \text{for } t \geq C_0.
$$
\end{proposition}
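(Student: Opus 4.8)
The key observation is that, although $W$ comes from the $N\times d$ column submatrix $A|_{\C^J}$ of $A$ and $N$ may be far larger than $d$, the projection $P$ has range of dimension only $N-(n-d)=2d-1$; it is this low rank that makes a bound of order $\sqrt d$ (rather than $\sqrt N$) possible, so the crude estimate $\|W\|\le\|A|_{\C^J}\|\approx\sqrt N$ must be avoided. Since $H_{J^c}=\Span(X_k)_{k\in J^c}$ depends only on the columns $X_k$, $k\in J^c$, which are independent of $A|_{\C^J}$, the plan is to first condition on $\{X_k:k\in J^c\}$. On this event $P$ is a fixed orthogonal projection of rank $2d-1$ (using $N=n-1+d$ and the standing assumption $\dim H_{J^c}=n-d$), while $A|_{\C^J}$ remains genuinely complex and independent of $P$. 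It therefore suffices to prove the stated estimate for an arbitrary fixed orthogonal projection $P$ onto a subspace $V\subseteq\C^N$ with $\dim V\le 2d$, with constants depending only on $B$; integrating out the conditioning then yields the proposition.

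With $P$ fixed, since $P=P^\ast$ has range $V$ we have, for every $z\in S(\C^J)$, $\|Wz\|_2=\|P(Az)\|_2=\sup_{u\in S(V)}|\langle Az,u\rangle|$, so that
\[
  \|W\| \;=\; \sup_{z\in S(\C^J)}\ \sup_{u\in S(V)}\big|\langle Az,u\rangle\big|.
\]
I would then run the usual two-net argument. By Proposition \ref{prop:nets} there is a $(1/4)$-net $\NN_1$ of $S(\C^J)$ with $|\NN_1|\le C^{2d}$, and applying the same proposition inside $V$ there is a $(1/4)$-net $\NN_2$ of $S(V)$ with $|\NN_2|\le C^{4d}$; the standard approximation inequality gives $\|W\|\le 2\sup_{(z,u)\in\NN_1\times\NN_2}|\langle Az,u\rangle|$.

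For fixed unit vectors $z\in\C^J$ and $u\in V$ one has $\langle Az,u\rangle=\sum_{k\in J}z_k\langle X_k,u\rangle$, a linear form in the independent real variables $\{\xi_{ki},\xi'_{ki}\}_{k\in J,\,i\in[N]}$. A routine computation with the genuinely complex structure (in which the real and imaginary parts decouple, exactly as in Lemma \ref{lem:RtoC} and its uses above) shows that $\RR(\langle Az,u\rangle)$ and $\II(\langle Az,u\rangle)$ are each sums of independent mean-zero subgaussian variables whose combined subgaussian moments are $O(B\|z\|_2\|u\|_2)=O(B)$; hence $\P(|\langle Az,u\rangle|>s)\le 4e^{-cs^2/B^2}$ for every $s>0$, with $c$ depending only on $B$. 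A union bound over $\NN_1\times\NN_2$ gives
\[
  \P\big(\|W\|>2s\big)\;\le\;|\NN_1|\,|\NN_2|\cdot 4e^{-cs^2/B^2}\;\le\;4C^{6d}e^{-cs^2/B^2}.
\]
Putting $s=t\sqrt d/2$ and taking $C_0=C_0(B)$ large enough that $4C^{6d}\le e^{ct^2d/(8B^2)}$ for all $t\ge C_0$ and all $d\ge 1$ (possible because the left side grows only like $e^{O(d)}$ while the right exponent is $\gtrsim t^2d$), one obtains $\P(\|W\|>t\sqrt d)\le e^{-c_0t^2d}$ for $t\ge C_0$, with $c_0,C_0$ depending only on $B$; since these constants are uniform in the realization of $P$, removing the conditioning finishes the proof. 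I do not expect any genuinely hard step here — this is presumably why the authors omit it — but two points need care: \emph{(i)} one must exploit the low dimension of the range of $P$, so that the nets $\NN_1,\NN_2$ have cardinality $e^{O(d)}$ matching the target exponent $t^2d$, instead of bounding trivially by $\|A|_{\C^J}\|$; and \emph{(ii)} the bookkeeping verifying that the bilinear form $\langle Az,u\rangle$ inherits a subgaussian moment of order $B$ from the genuinely complex hypothesis, through the same real/imaginary decomposition used throughout Section \ref{sec:proof:rectangular}.
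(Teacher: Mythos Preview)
Your argument is correct and is exactly the standard proof the authors have in mind; the paper omits the proof entirely, merely citing Proposition~7.3 of \cite{RVLittlewoodOfford}. The two essential ingredients you isolate --- conditioning on $\{X_k:k\in J^c\}$ so that $P$ becomes a fixed projection of rank $2d-1$ independent of $A|_{\C^J}$, and then running a net argument over $S(\C^J)\times S(V)$ of total cardinality $e^{O(d)}$ against the $e^{-c s^2}$ subgaussian tail of the bilinear form --- are precisely what makes the exponent $t^2 d$ rather than $t^2 N$, and this is the content of the cited proposition (adapted to the complex setting).
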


Having controlled the operator norm of $W$, we can run through the standard approximation argument to uniformly control the distance.
\begin{proposition} \label{prop:uniformdistWsmall}
For every $t$ that satisfies (\ref{eq:t}) we have
\begin{equation} \label{eq:smallWzandW}
\P \Big(\inf_{z \in \Spread_J} \|Wz - w\|_2 < t \sqrt{d} \text{ and } \|W\| \leq K_0 \sqrt{d} \Big)\leq (C_2 t)^{2d-1}.
\end{equation}
\end{proposition}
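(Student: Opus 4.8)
The plan is to run the standard net‑and‑approximation argument of \cite{RVRectangle}, feeding in Theorem \ref{thm:distance} as the per‑point input. Throughout one assumes $t$ satisfies \eqref{eq:t}. Write $W' := P(A-\lambda)|_{\C^J} = W - \lambda P|_{\C^J}$, so that by \eqref{eq:distanceW} one has $\|Wz - w\|_2 = \|W'z - Pv\|_2 = \dist((A-\lambda)z, H_{J^c}+v)$ for $z\in\C^J$; for concreteness I take $v=0$ (so $\|Wz-w\|_2 = \|W'z\|_2$), the general case being identical.

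First I would discretize: fix $\eps_0 := \k t$ for a small constant $\k>0$ depending only on $\d,\rho,M$ (pinned down in the approximation step), and apply Proposition \ref{prop:nets} to $\Spread_J\subseteq S(\C^J)\cong S_\C^{d-1}$ to get an $\eps_0$‑net $\NN$ of $\Spread_J$ with $|\NN|\le 4d(1+2/\eps_0)^{2d-1}\le (C/t)^{2d-1}$. Next, for a fixed $z\in\NN$ I would bound $\P(\|W'z\|_2<2t\sqrt d)$. The columns $(X_k)_{k\in J}$ of $A$ are independent of $H_{J^c}=\Span(X_k)_{k\in J^c}$, which (as $n-d>(1-\tilde c)N$ and, after the absolute‑continuity reduction, the distribution is continuous) is spanned by $n-d$ genuinely complex vectors with $\dim H_{J^c}^\perp = N-(n-d)=2d-1$ a.s.; the distance $\dist((A-\lambda)z,H_{J^c})=\big\|\sum_{i=1}^N \zeta_i\,Pe_i\big\|_2$ with $\zeta_i := ((A-\lambda)z)_i$ independent across rows, each of unit‑order variance and bounded fourth moment uniformly over $z\in\Spread_J$ (by spreadness and Lemma \ref{lem:singlecoordinate}). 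Conditioning on $H_{J^c}$ lying in the high‑probability event $\LCD_{\a,c}(H_{J^c}^\perp)\ge c\sqrt N e^{cN/(2d-1)}$ of Theorem \ref{thm:structure} and invoking the small‑ball/distance machinery (Theorems \ref{thm:smallball}, \ref{thm:distancegeneral}; equivalently Theorem \ref{thm:distance}) with $m=2d-1$, and noting the threshold $2t\sqrt d$ is comparable to $\sqrt m$, I would obtain $\P(\|W'z\|_2<2t\sqrt d)\le (Ct)^{2(2d-1)}+e^{-cN}$ with no loss in the exponent.

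Then comes the approximation step, carried out on $\{\|W\|\le K_0\sqrt d\}$ intersected with the high‑probability event $\{\|P|_{\C^J}\|\le C'\sqrt{d/N}\}$ (an operator‑norm net bound for the $N\times d$ matrix with columns $Pe_k$, $k\in J$, as in Proposition \ref{prop:opnorm}): if $\|W'z\|_2<t\sqrt d$ for some $z\in\Spread_J$, then for the nearby net point $z_0\in\NN$, using $W'=W-\lambda P|_{\C^J}$, $|\lambda|\le M\sqrt N$, and $\|z-z_0\|_2\le\eps_0$,
\[
\|W'z_0\|_2 \le \|W'z\|_2 + \|W\|\,\eps_0 + |\lambda|\,\|P(z_0-z)\|_2 \le t\sqrt d + (K_0+MC')\sqrt d\,\eps_0 \le 2t\sqrt d
\]
once $\k(K_0+MC')\le 1$. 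A union bound over $\NN$ gives $\P(\cdots)\le (C/t)^{2d-1}\big[(Ct)^{2(2d-1)}+e^{-cN}\big]+e^{-cN}\le (C_2 t)^{2d-1}+(C/t)^{2d-1}e^{-cN}+e^{-cN}$, and the last two terms are absorbed into $(C_2t)^{2d-1}$ using $t\ge t_0=e^{-\bar cN/d}$ from \eqref{eq:t} (taking $\bar c$ small, recalling $d\le c_0 n$), which yields \eqref{eq:smallWzandW}.

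The main obstacle I expect is the per‑point estimate. The vector $(A-\lambda)z$ is \emph{not} genuinely complex — the real and imaginary parts of each $\zeta_i$ are uncorrelated but not independent — so Theorem \ref{thm:distance} does not apply verbatim; instead one must run the Littlewood–Offord argument over the $N$ independent two‑dimensional blocks produced by the rows, using precisely the real/complex conversion (Lemma \ref{lem:RtoC}) and complex‑$\LCD$ apparatus developed earlier, and one must keep the threshold scaled by $\sqrt{2d-1}$ so that the surplus exponent $2d-1$ (the gap between the $2(2d-1)$ from the distance bound and the $2d-1$ from the net cardinality) survives — any polynomial‑in‑$d$ slack here would corrupt the constants claimed in Theorem \ref{thm:rectangular}. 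A secondary difficulty is the spectral scale: $|\lambda|$ may be as large as $M\sqrt N$, which would force an impractically fine net in the approximation step if one did not exploit that $\|P|_{\C^J}\|=O(\sqrt{d/N})$ with overwhelming probability, bringing the $\lambda$‑term down to the same order $O(\sqrt d\,\eps_0)$ as the $\|W\|$‑term.
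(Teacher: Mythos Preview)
Your net--per-point--approximation scheme is the paper's. The substantive divergence is in the approximation step. The paper treats $w$ as a \emph{fixed} vector and passes from a bad point $z'\in\Spread_J$ to the nearby net point $z$ using only the hypothesis $\|W\|\le K_0\sqrt d$: with net scale $\eps=t/K_0$ one gets $\|Wz-w\|_2\le\|Wz'-w\|_2+\|W\|\,\|z-z'\|_2<2t\sqrt d$, and the union bound against the per-point estimate $(2C_1t)^{4d-2}$ finishes. Your route instead carries the $\lambda$-shift explicitly through $W'=W-\lambda P|_{\C^J}$ and then needs $\|P|_{\C^J}\|\le C'\sqrt{d/N}$ with overwhelming probability. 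That last claim is a genuine gap: it is \emph{not} a consequence of Proposition~\ref{prop:opnorm}, which concerns matrices with independent subgaussian entries, whereas the columns $Pe_k$, $k\in J$, are deterministic once $H_{J^c}$ (hence $P$) is fixed, and $P$ itself has no subgaussian-entry structure. An estimate of this flavor is believable for a Haar-random $(2d-1)$-dimensional projection, but $H_{J^c}^\perp$ is not Haar here and no such control is developed anywhere in the paper; proving it would require a separate and not entirely trivial argument. The paper simply sidesteps the issue by working with fixed $w$ in this proposition.

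Your first obstacle --- that $(A-\lambda)z$ has independent rows but not genuinely complex coordinates, so Theorem~\ref{thm:distance} does not apply verbatim --- is a correct and fair point which the paper glosses over (it quotes the per-point bound $(2C_1t)^{4d-2}$ without comment). The intended justification is as you outline: condition on $H_{J^c}$ in the high-$\LCD$ event of Theorem~\ref{thm:structure} and run the small-ball machinery over the $N$ independent two-dimensional row-blocks via the $[\cdot]$ conversion of Lemma~\ref{lem:RtoC}, in the same spirit as Lemma~\ref{lem:singlevector}.
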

\begin{proof}
Let $\e = t/K_0$.  By Proposition \ref{prop:nets}, there exists an $\e$-net $\NN$ of $\Spread_J \subseteq S(\C^J)$ of cardinality
$$
|\NN| \leq 4d \left( a + \frac{2}{\e} \right)^{2d -1} \leq 4d \left( \frac{3K_0}{t} \right)^{2d-1}.
$$
Consider the event
$$
\EE := \left \{ \inf_{z \in \NN} \|W z -w\|_2 < 2t \sqrt{d} \right\}.
$$
Taking a union bound, we obtain
$$
\P(\EE) \leq |\NN| \max_{z \in \NN} \P(\|Wz -w\|_2 \leq 2t \sqrt{d}) \leq 4d\left( \frac{3K_0}{t} \right)^{2d-1} (2C_1t)^{4d-2} \leq (C_2 t)^{2d-1}.
$$
Now, suppose the event in (\ref{eq:smallWzandW}) holds, i.e. there exists $z' \in \Spread_J$ such that 
$$
\|Wz' -w\|_2 < t \sqrt{d} \text{ and } \|W\| \leq K_0 \sqrt{d}.
$$  
Choose $z \in \NN$ such that $\|z - z'\|_2 \leq \e$.  Then by the triangle inequality
$$
\|W z - w\|_2 \leq \|W z' - w\|_2 + \|W\| \|z- z'\|_2 < t \sqrt{d} + K_0 \sqrt{d} \e \leq 2t \sqrt{d}.
$$
\end{proof}

We now invoke a proposition from \cite{RVRectangle} which allows us to decouple the behavior of $\|W\|$ and $\|Wz\|_2$.  The proof is a simple translation of the real version.
  
\begin{proposition}[Decoupling, Proposition 7.5, \cite{RVRectangle}]                 \label{prop:decoupling}
  Let $W$ be an $N \times d$ matrix whose columns are independent
  random vectors. Let $\b > 0$ and
  let $z  \in S^{d-1}$ be a vector satisfying $|z_k| \ge
  \frac{\b}{\sqrt{d}}$ for all $k \in \{1 \etc d\}$.
  Then for every $0 < a < b$, we have
  $$
  \P \big( \|Wz\|_2 < a, \; \|W\| > b \big)
    \le 2 \sup_{x \in S^{d-1}_\C, w \in \C^N}
      \P \Big( \|Wx-w\|_2 < \frac{\sqrt{2}}{\b} a \Big) \;
      \P \Big( \|W\| > \frac{b}{\sqrt{2}} \Big).
  $$
\end{proposition}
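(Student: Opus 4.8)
The plan is to carry out the classical split-the-columns-in-half-and-condition decoupling. Partition the column index set as $[d] = \sigma \cup \sigma^c$ with $|\sigma|, |\sigma^c| \ge d/2$ (say $\sigma = \{1,\dots,\lceil d/2\rceil\}$; the case $d = 1$ is vacuous, since then $\|Wz\|_2 = \|W\|$ while $a < b$). Let $W_\sigma$ and $W_{\sigma^c}$ be the submatrices of $W$ formed by the columns in $\sigma$ and $\sigma^c$; since the columns of $W$ are independent, $W_\sigma$ and $W_{\sigma^c}$ are independent. Splitting $z$ accordingly as $z_\sigma, z_{\sigma^c}$ we have $Wz = W_\sigma z_\sigma + W_{\sigma^c} z_{\sigma^c}$, and the hypothesis $|z_k| \ge \b/\sqrt d$ forces $\|z_\sigma\|_2^2 \ge |\sigma|\b^2/d \ge \b^2/2$ and likewise $\|z_{\sigma^c}\|_2 \ge \b/\sqrt2$. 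This coordinate bound is the only place the hypothesis on $z$ is used.

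First I would decouple the norm event. For any unit $x \in \C^d$, by the triangle inequality and Cauchy--Schwarz,
$$
\|Wx\|_2 \le \|W_\sigma\|\,\|x_\sigma\|_2 + \|W_{\sigma^c}\|\,\|x_{\sigma^c}\|_2 \le \big(\|W_\sigma\|^2 + \|W_{\sigma^c}\|^2\big)^{1/2},
$$
so $\|W\|^2 \le \|W_\sigma\|^2 + \|W_{\sigma^c}\|^2$; hence $\{\|W\| > b\}$ is contained in $\{\|W_\sigma\| > b/\sqrt2\} \cup \{\|W_{\sigma^c}\| > b/\sqrt2\}$. A union bound reduces matters to bounding $\P(\|Wz\|_2 < a,\ \|W_\sigma\| > b/\sqrt2)$ and the symmetric term with $\sigma, \sigma^c$ interchanged.

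For the first term I would condition on $W_\sigma$. Given $W_\sigma$, the vector $u := W_\sigma z_\sigma$ is fixed, and since $\|z_{\sigma^c}\|_2 \ge \b/\sqrt2$, the event $\{\|Wz\|_2 < a\}$ is contained in $\{\|W_{\sigma^c} x_0 - w\|_2 < \sqrt2 a/\b\}$, where $x_0 := z_{\sigma^c}/\|z_{\sigma^c}\|_2$ and $w := -u/\|z_{\sigma^c}\|_2$. Zero-padding $x_0$ on $\sigma$ produces a unit vector $\tilde x_0 \in S^{d-1}_\C$ with $W\tilde x_0 = W_{\sigma^c} x_0$, a quantity depending only on $W_{\sigma^c}$. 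By independence of $W_\sigma$ and $W_{\sigma^c}$,
$$
\P\big(\|Wz\|_2 < a,\ \|W_\sigma\| > b/\sqrt2\big) \le \E_{W_\sigma}\Big[\one_{\{\|W_\sigma\| > b/\sqrt2\}}\, \P_{W_{\sigma^c}}\big(\|W\tilde x_0 - w\|_2 < \tfrac{\sqrt2}{\b}a\big)\Big],
$$
and the inner probability is at most $\sup_{x \in S^{d-1}_\C,\, w \in \C^N} \P(\|Wx - w\|_2 < \tfrac{\sqrt2}{\b}a)$, while $\P(\|W_\sigma\| > b/\sqrt2) \le \P(\|W\| > b/\sqrt2)$ since $\|W_\sigma\| \le \|W\|$. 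The symmetric term is handled identically (conditioning on $W_{\sigma^c}$ and using $\|z_\sigma\|_2 \ge \b/\sqrt2$), and adding the two bounds yields the factor $2$ and the claimed inequality.

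The argument is essentially bookkeeping, and I do not expect a genuine obstacle; the one step requiring care is the conditioning, where after fixing $W_\sigma$ one must verify that the residual event depends only on $W_{\sigma^c}$ — achieved by absorbing the fixed partial sum $W_\sigma z_\sigma$ into the translate $w$ and zero-padding $x_0$ to a vector of $S^{d-1}_\C$ — which is precisely what legitimizes replacing the conditional probability by the uniform supremum over pairs $(x,w)$.
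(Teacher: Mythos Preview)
Your proposal is correct and is precisely the standard column-splitting/conditioning argument from \cite{RVRectangle}; the paper itself omits the proof, remarking only that it is ``a simple translation of the real version,'' which is exactly what you have written out.
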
 

We apply this proposition to prove the following lemma.
\begin{lemma}                       \label{lem:uniform distance W}
  Let $W$ be a random matrix as in \eqref{eq:W}, where $P$ is the orthogonal projection
  of $\C^N$ onto the random subspace $(H_{J^c})^\perp$, defined as
  in Theorem \ref{thm:uniformdistance}.
  Then for every $s \ge 1$ and every $t$ that satisfies \eqref{eq:t}, we have
  \begin{align}                  \label{eq: uniform distance W}
    &\P \Big( \inf_{z \in \Spread_J} \|Wz\|_2 < t \sqrt{d}
    \text{ and } s K_0 \sqrt{d} < \|W\| \le 2 s K_0 \sqrt{d} \Big)
    \\
    &\le (C_3 t e^{-c_3 s^2})^{2d-1} +e^{-cN}. \notag
  \end{align}
\end{lemma}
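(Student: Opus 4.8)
The plan is to follow the approximation-plus-decoupling scheme used for the real analogue of this lemma in \cite{RVRectangle}, feeding in the genuinely complex distance bound Theorem \ref{thm:distance} in place of its real counterpart. Fix $s\ge 1$ and $t$ as in \eqref{eq:t}. If the right-hand side of \eqref{eq: uniform distance W} is at least $1$ there is nothing to prove, and if $t>2sK_0$ then, since $\norm{Wz}_2\le\norm{W}$ for every unit $z$, the event in \eqref{eq: uniform distance W} is contained in $\{\norm{W}>sK_0\sqrt d\}$, whose probability is controlled directly by Proposition \ref{prop:normW}; so I may assume $t\le 2sK_0$. Then, by Proposition \ref{prop:nets}, $\Spread_J$ admits an $\e$-net $\NN$ with $\e=t/(16sK_0)$ and $|\NN|\le (C s/t)^{2d-1}$ (absorbing the $d$-dependent prefactor into the base). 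On the event in \eqref{eq: uniform distance W} one has $\norm{W}\le 2sK_0\sqrt d$, so whenever $z'\in\Spread_J$ satisfies $\norm{Wz'}_2<t\sqrt d$ and $z_0\in\NN$ satisfies $\norm{z_0-z'}_2\le\e$, the triangle inequality gives $\norm{Wz_0}_2<2t\sqrt d$ while $\norm{W}>sK_0\sqrt d$. Thus the event lies in $\bigcup_{z_0\in\NN}\{\norm{Wz_0}_2<2t\sqrt d\text{ and }\norm{W}>sK_0\sqrt d\}$, and a union bound reduces everything to bounding $\P(\norm{Wz_0}_2<2t\sqrt d\text{ and }\norm{W}>sK_0\sqrt d)$ for a fixed $z_0\in\NN\subseteq\Spread_J$.

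For such a $z_0$, $|(z_0)_k|\ge K_1/\sqrt d$ for all $k\in J$, so the Decoupling Proposition \ref{prop:decoupling} (with $\beta=K_1$, $a=2t\sqrt d$, $b=sK_0\sqrt d$) bounds this probability by
$$2\,\Big(\sup_{x\in S^{d-1}_\C,\ w\in\C^N}\P\big(\norm{Wx-w}_2<\tfrac{2\sqrt 2}{K_1}\,t\sqrt d\big)\Big)\ \P\big(\norm{W}>\tfrac{sK_0}{\sqrt 2}\sqrt d\big).$$
Choosing $K_0$ large (so that $\tfrac{sK_0}{\sqrt 2}$ exceeds the threshold in Proposition \ref{prop:normW} for every $s\ge 1$), the second factor is at most $(e^{-c_3 s^2})^{2d-1}$ for a suitable $c_3>0$. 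For the first factor, write $W=PA|_{\C^J}$ with $P$ the orthogonal projection onto $(H_{J^c})^\perp$, so that $\norm{Wx-w}_2\ge\norm{Wx-Pw}_2=\dist\big(\sum_{k\in J}x_k X_k,\ H_{J^c}+w\big)$. The vector $\sum_{k\in J}x_kX_k$ has independent coordinates, each anti-concentrated in $\R^2$ (its real and imaginary parts are uncorrelated, of unit variance, and subgaussian), and it is independent of $H_{J^c}=\Span(X_k)_{k\in J^c}$; this is exactly the hypothesis needed for the distance estimate. Since $\dim H_{J^c}=n-d$, so $N-\dim H_{J^c}=2d-1<\tilde c N$ (using $d\le c_0 n$ from \eqref{eq:sizeofd}), Theorem \ref{thm:distance}—whose proof, via Theorems \ref{thm:distancegeneral} and \ref{thm:structure}, uses only the exponentially large lower bound on $\LCD_{\a,\g}\big((H_{J^c})^\perp\big)$, a quantity that does not involve $x$—gives
$$\sup_{x,w}\P\big(\norm{Wx-w}_2<\tfrac{2\sqrt 2}{K_1}t\sqrt d\big)\le (C t)^{2(2d-1)}+e^{-cN},$$
because $\tfrac{2\sqrt 2}{K_1}t\sqrt d=\e_0\sqrt{2d-1}$ with $\e_0\le\tfrac{2\sqrt 2}{K_1}t$.

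It remains to multiply the three estimates. The main contribution is $|\NN|\cdot(Ct)^{2(2d-1)}\cdot(e^{-c_3 s^2})^{2d-1}\le (C' st\,e^{-c_3 s^2})^{2d-1}$, which, after decreasing $c_3$ slightly to absorb the stray $s$ via $se^{-c_3 s^2}\le C''e^{-c_3' s^2}$, is at most $(C_3 t e^{-c_3' s^2})^{2d-1}$. The remaining contribution is $|\NN|\cdot e^{-cN}\cdot(e^{-c_3 s^2})^{2d-1}\le (C s/t)^{2d-1}e^{-cN}$; since $1/t\le e^{\bar cN/d}$ by \eqref{eq:t} and $2d-1\le 2c_0 n\le 2c_0 N$ by \eqref{eq:sizeofd}, this is $\le e^{O(c_0)N}e^{2\bar cN}e^{-cN}\le e^{-c'N}$ once $\bar c$ and $c_0$ are fixed small enough. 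Together with the two reductions at the start, this yields \eqref{eq: uniform distance W} (after relabeling $c_3'$ as $c_3$).

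The step I expect to be most delicate is the first factor in the decoupled estimate: decoupling produces a supremum over \emph{all} unit directions $x$, not merely totally spread ones, and for a general $x$ one cannot afford to bound $\norm{Wx-w}_2$ from below by the distance of a single column $X_{k_0}$ to $H_{J^c}$—that reduction costs a factor $\sqrt d$ in the radius, which is fatal once raised to the power $2d-1$. One must instead run the full genuinely complex distance estimate for the linear combination $\sum_{k\in J}x_kX_k$, and this is precisely where the improvement from $\e$ to $\e^2$—the exponent $2(2d-1)$ rather than $2d-1$ in the per-point bound—is indispensable: it lets the per-point probability overcome the size $(s/t)^{2d-1}$ of the net while still leaving a net power $t^{2d-1}$.
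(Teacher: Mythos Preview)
Your overall strategy---net on $\Spread_J$, decoupling, distance estimate, structure theorem---is exactly the paper's. There is, however, one genuine technical gap.

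Proposition~\ref{prop:decoupling} requires the columns of $W$ to be independent. The columns of $W=PA|_{\C^J}$ are $PX_k$, $k\in J$; they all depend on the projection $P$, which is a function of $(X_k)_{k\in J^c}$, so they are \emph{not} unconditionally independent. Hence you cannot apply decoupling to the unconditional probability $\P(\|Wz_0\|_2<2t\sqrt d,\ \|W\|>sK_0\sqrt d)$ and land on a product of two unconditional probabilities as you write. Relatedly, even if one could, what would appear is $\sup_{x,w}\P(\,\cdot\mid H_{J^c})$, i.e.\ the supremum of the \emph{conditional} small-ball probability; bounding $\sup_{x,w}$ of the \emph{unconditional} probability via Theorem~\ref{thm:distance} is the wrong quantity (it is $\sup\E$ rather than $\E\sup$).

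The fix is precisely what the paper does: first condition on a realization of $H_{J^c}$ (so the columns of $W$ become independent), then apply decoupling conditionally. The factor $\P(\|W\|>\tfrac{sK_0}{\sqrt2}\sqrt d\mid H_{J^c})$ is bounded uniformly in $H_{J^c}$ by Proposition~\ref{prop:normW}. For the factor $\sup_{x,w}\P(\|Wx-w\|_2<\cdot\mid H_{J^c})$, on the event $\{\LCD_{\a,c}((H_{J^c})^\perp)\ge c\sqrt N e^{cN/m}\}$ one invokes the deterministic-subspace estimate (Theorem~\ref{thm:distancegeneral}) to get the $(C't)^{4d-2}$ bound---this is what you call ``run the full distance estimate,'' and your observation that the relevant LCD is that of $(H_{J^c})^\perp$, independent of $x$, is exactly the point. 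Finally, Theorem~\ref{thm:structure} controls the complementary bad-LCD event by $e^{-cN}$. After this reorganization your arithmetic (absorbing the stray $s$, handling the $e^{-cN}\cdot|\NN|$ cross term via \eqref{eq:t} and \eqref{eq:sizeofd}) goes through and matches the paper.
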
 

\begin{proof}
Let $\e = t/2s K_0$.  By Proposition \ref{prop:nets}, there exists an $\e$-net $\NN$ of $\Spread_J \subset S(\R^J)$ of cardinality 
$$
|\NN| \leq 2d \Big(1 + \frac{2}{\e} \Big) ^{2d-1} \leq 2d \left( \frac{6s K_0}{t} \right)^{2d-1}.
$$
Consider the event
$$
\EE := \Big \{ \inf_{z \in \NN} \|Wz\|_2 < 2t \sqrt{d} \text{ and } \|W\| > s K_0 \sqrt{d} \Big \}.
$$
We condition on a realization of the subspace $H_{J^c}$ which allows us to consider the columns of $W$ as independent.  By the definition of $\Spread_J$, we can apply the decoupling proposition \ref{prop:decoupling} with $\beta = K_1$.  Applying a union bound, we have that
\begin{align*}
   \P(\EE \mid H_{J^c})
   &\le |\NN| \cdot \max_{z \in \NN} \P \big( \|Wz\|_2 \le 2t\sqrt{d}
     \text{ and } \|W\| > s K_0 \sqrt{d} \mid H_{J^c} \big) \\
  &\le |\NN| \cdot 2 \max_{z \in S(\R^J), \; w \in \R^N}
    \P \Big( \|Wz- w\|_2 < \frac{\sqrt{2}}{K_1} \cdot 2t\sqrt{d} \mid H_{J^c}
    \Big) \\
    & \quad \cdot \P \Big( \|W\| > \frac{s K_0 \sqrt{d}}{\sqrt{2}} \mid H_{J^c}
    \Big).
\end{align*}
Assuming that $\LCD_{\a, c}(H_{J^c}^\perp) \ge c \sqrt{N}
e^{cN/m}$, where $\a$ and $c$ are as in Theorem \ref{thm:structure},
then by Proposition \ref{prop:normW}  and representation \eqref{eq:distanceW}, we can conclude as in the proof of Theorem \ref{thm:distance} that
$$
\P(\EE  \mid H_{J^c}) \le 8 d \Big( \frac{6 s K_0}{t} \Big)^{2d-1}
  \cdot (C' t)^{4d-2} \cdot e^{-c's^2 d}
$$
 for any $t$ satisfying \eqref{eq:t}.
Since $s \ge 1$ and $d \ge 1$, we can use the following uperbound
$$
\P(\EE  \mid H_{J^c}) \le (C_3 t e^{-c_3 s^2/2})^{2d-1}.
$$
Additionally, by Theorem \ref{thm:structure},
 \begin{align*}
   \P(\EE)
   &\le \P(\EE  \mid \LCD_{\a, c}(H_{J^c}^\perp) \ge c \sqrt{N}
            e^{cN/m} )
   + \P( \LCD_{\a, c}(H_{J^c}^\perp) < c \sqrt{N}  e^{cN/m} ) \\
   &\le (C_3 t e^{-c_3 s^2})^{2d-1} + e^{-cN}.
 \end{align*}
Now, suppose the event in \eqref{eq: uniform distance W} holds.  
There exists $z' \in \Spread_J$ such that
$$
\|Wz'\|_2 < t \sqrt{d} \text{ and } s K_0 \sqrt{d} < \|W\| \le 2 s K_0 \sqrt{d}.
$$
Choose $z \in \NN$ such that $\|z-z'\|_2 \le \e$.
Then by the triangle inequality
$$
\|Wz\|_2 \le \|Wz'\|_2 + \|W\| \cdot \|z-z'\|_2
< t\sqrt{d} + 2 s K_0 \sqrt{d} \cdot \e
\le 2 t \sqrt{d}.
$$
Thus, $\EE$ holds. The conclusion follows from the bound on the probability of $\EE$.
\end{proof}

\begin{proof}[Proof of Theorem \ref{thm:uniformdistance}]
Recall that we can safely assume \eqref{eq:t} holds.
 Let $k_1$ be the smallest natural number such that
 \begin{equation}  \label{k_1}
   2^{k_1} \cdot K_0 \sqrt{d} > C_0 \sqrt{N},
 \end{equation}
 where $C_0$ and $K_0$ are constants from Proposition~\ref{prop:opnorm} and
Lemma~\ref{lem:uniform distance W} respectively.
Summing the probability bounds from Proposition~\ref{prop:uniformdistWsmall} 
and Lemma~\ref{lem:uniform distance W} for $s = 2^k$, $k=1,
\ldots, k_1$, we find that
\begin{align*}
  &\P \Big( \inf_{z \in \Spread_J} \|Wz\|_2 < t \sqrt{d} \Big) \\
  &\le (C_2 t)^{2d-1}
   + \sum_{s = 2^k, \; k=1, \ldots, k_1}
       \Big ( (C_3 t e^{-c_3 s^2})^{2d-1}+ e^{-cN} \Big )
   + \P(\norm{W}>C_0 \sqrt{N})  \\
  &\le (C_4 t)^{2d-1} + k_1 e^{-c'N} + \P(\norm{W}>C_0 \sqrt{N}).
\end{align*}
 By \eqref{k_1} and Proposition~\ref{prop:opnorm}, the last expression is upperbounded by $(C t)^{2d-1} +  e^{-c''N}$.

\end{proof}

\subsection{Proofs of Theorems \ref{thm:rectangular} and \ref{thm:nearlysquare}}
\begin{proof}
By Lemma \ref{lem:inverttodistance} and Theorem \ref{thm:uniformdistance}, we can conclude that
$$\P \big( \inf_{x \in \Incomp(\d,\rho)} \|(A- \l) x\|_2 \le \e \frac{d}{\sqrt{n}} \big) \leq (C \eps)^{2d-1} + e^{-cN}.$$  
By (\ref{two terms}) this concludes the proof.
\end{proof}

A more direct approach suffices for the proof of Theorem \ref{thm:nearlysquare}.  The proof is essentially identical to the square case (c.f. \cite{RVLittlewoodOfford, luh2018complex}).

In this setting, we can use a more straightforward reduction to the distance problem.  
\begin{lemma} [Lemma 3.4, \cite{RVLittlewoodOfford}] \label{lem:invertbydistance}
For $\l \in \C^n$ and $|\l| \leq M \sqrt{N}$,
$$
\P(\inf_{x \in \Incomp(\d, \rho)} \|(A- \l) x\|_2 < \eps \rho n^{-1/2}) \leq \frac{1}{\delta n} \sum_{k=1}^n \P(\dist(X_k, H_k) < \eps)
$$
where $X_k$ denotes the $k$-th column of $A - \lambda$ and $H_k$ is the span of all the columns excluding the $k$-th.
\end{lemma}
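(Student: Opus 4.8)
The plan is to reduce the infimum over incompressible vectors to a counting statement about the columns of $A-\lambda$, via an elementary deterministic inequality followed by an averaging argument. First I would record the standard ``spreading'' property of incompressible vectors: if $x \in \Incomp(\d,\rho)$, then the set $\sigma(x) := \{ k \in [n] : |x_k| \ge \rho/\sqrt{n} \}$ has cardinality at least $\d n$. Indeed, were $|\sigma(x)| < \d n$, the vector $y$ obtained from $x$ by zeroing out all coordinates outside $\sigma(x)$ would be $\d$-sparse and would satisfy $\|x - y\|_2^2 = \sum_{k \notin \sigma(x)} |x_k|^2 < n \cdot (\rho/\sqrt{n})^2 = \rho^2$, contradicting the assumption that $x$ is incompressible.

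Next I would establish the underlying geometric inequality. Writing $A - \lambda = [X_1, \dots, X_n]$ so that $(A-\lambda)x = \sum_{j=1}^n x_j X_j$, and using that $\sum_{j \ne k} x_j X_j \in H_k$ while $0 \in H_k$, one obtains for every $k \in [n]$ and every $x \in \C^n$
\[
\|(A-\lambda)x\|_2 \ge \dist\left( \sum_{j=1}^n x_j X_j,\, H_k \right) = \dist(x_k X_k, H_k) = |x_k|\, \dist(X_k, H_k),
\]
where the middle equality uses that the distance to a subspace is unchanged by translating by a vector of that subspace, and the last uses the positive homogeneity of $\dist(\cdot, H_k) = \|P_{H_k^\perp}(\cdot)\|_2$.

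Finally I would combine the two facts. On the event $\mathcal{E} := \{ \inf_{x \in \Incomp(\d,\rho)} \|(A-\lambda)x\|_2 < \eps \rho n^{-1/2} \}$, fix a (realization-dependent) witness $x \in \Incomp(\d,\rho)$ with $\|(A-\lambda)x\|_2 < \eps \rho n^{-1/2}$. For each of the at least $\d n$ indices $k \in \sigma(x)$, the displayed inequality gives $\dist(X_k, H_k) \le \|(A-\lambda)x\|_2 / |x_k| < (\eps \rho n^{-1/2}) / (\rho n^{-1/2}) = \eps$. Hence on $\mathcal{E}$ one has $\#\{ k \in [n] : \dist(X_k, H_k) < \eps \} \ge \d n$, so that
\[
\d n \cdot \one_{\mathcal{E}} \le \sum_{k=1}^n \one_{\{ \dist(X_k, H_k) < \eps \}}
\]
pointwise; taking expectations and dividing by $\d n$ yields the claimed bound. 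The argument is deterministic except for the final expectation, so there is no genuine obstacle; the only point requiring care is tracking the constant $\rho$ so that the threshold $\eps \rho n^{-1/2}$ in the hypothesis matches the threshold $\rho/\sqrt{n}$ defining $\sigma(x)$ — which is exactly what makes the clean factor $\eps$ appear on the right-hand side.
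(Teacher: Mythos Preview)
Your argument is correct and is precisely the standard proof of this lemma from \cite{RVLittlewoodOfford}: the spreading property of incompressible vectors, the pointwise inequality $\|(A-\lambda)x\|_2 \ge |x_k|\,\dist(X_k,H_k)$, and the averaging step are exactly the ingredients used there. The paper does not give its own proof but simply cites the original, noting that the argument carries over unchanged to the rectangular setting, so your write-up matches what is intended.
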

\begin{remark}
The proof in \cite{RVLittlewoodOfford} applies equally well in the rectangular setting.
\end{remark}

\begin{proof}[Proof of Theorem \ref{thm:nearlysquare}]
By (\ref{two terms}) and Lemma \ref{lem:invertbydistance}, our task reduces to bounding 
$$
\P(\dist(X_k, H_k) < \eps).
$$ 
By Theorem \ref{thm:distance}, 
$$
\P(\dist(X_k, H_k) < \eps ) \leq (C \sqrt{T} \eps)^{2(N - n+1)} + e^{-cN}.
$$

\end{proof}

\section{Proof of main results} \label{sec:proof}

This section is dedicated to the proof of our main results in Section \ref{sec:main}.  We record the following standard bound for the spectral norm of a random matrix with independent subgaussian entries.  

\begin{lemma} \label{lemma:norm}
Let $A$ be an $N \times n$ genuinely complex random matrix.  There exists constants $M \geq 1$ and $C,c >0$ such that
\[ \Prob( \|A \| \geq M \sqrt{\max\{N, n\}} ) \leq C \exp(-c \max\{N,n\}). \]
Here $M, C, c$ depend only on the uniform subgaussian moment bound $B$.  
\end{lemma}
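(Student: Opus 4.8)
The plan is to run the standard $\varepsilon$-net argument for the operator norm of a matrix with independent subgaussian entries. First I would reduce to the real case: writing $A = \Xi + \sqrt{-1}\,\Xi'$, where $\Xi$ and $\Xi'$ are $N \times n$ real random matrices whose entries are independent, mean zero, unit variance, with subgaussian moment at most $B$, the triangle inequality gives $\|A\| \le \|\Xi\| + \|\Xi'\|$. Hence it suffices to establish a bound of the stated form for a real $N \times n$ matrix with uniformly subgaussian entries. Set $D := \max\{N, n\}$.

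Next, I would fix unit vectors $x \in S_{\mathbb{R}}^{n-1}$ and $y \in S_{\mathbb{R}}^{N-1}$ and observe that $\langle \Xi x, y\rangle = \sum_{i=1}^N \sum_{j=1}^n \Xi_{ij} x_j y_i$ is a sum of independent, mean zero, subgaussian random variables; since $\sum_{i,j}(x_j y_i)^2 = \|x\|_2^2 \|y\|_2^2 = 1$, a Hoeffding-type subgaussian tail bound gives $\Prob(|\langle \Xi x, y\rangle| > t) \le 2\exp(-c_1 t^2 / B^2)$ for an absolute constant $c_1 > 0$. I would then take a $(1/4)$-net $\mathcal{N}$ of $S_{\mathbb{R}}^{n-1}$ with $|\mathcal{N}| \le 9^n$ and a $(1/4)$-net $\mathcal{M}$ of $S_{\mathbb{R}}^{N-1}$ with $|\mathcal{M}| \le 9^N$; the usual approximation argument yields $\|\Xi\| \le 2\max_{x \in \mathcal{N},\, y \in \mathcal{M}} |\langle \Xi x, y\rangle|$.

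A union bound over the net then gives
\[ \Prob\big(\|\Xi\| > 2t\big) \le 9^{n+N} \cdot 2\exp(-c_1 t^2/B^2) \le 2\exp\big((\log 9)(n+N) - c_1 t^2/B^2\big). \]
Using $n + N \le 2D$ and choosing $t = M_0\sqrt{D}$ with $M_0$ a sufficiently large constant depending only on $B$, the exponent becomes at most $-c_2 D$, so $\Prob(\|\Xi\| > 2M_0\sqrt{D}) \le 2\exp(-c_2 D)$, and likewise for $\Xi'$. Combining with $\|A\| \le \|\Xi\| + \|\Xi'\|$ and a final union bound produces $\Prob(\|A\| > 4M_0\sqrt{D}) \le 4\exp(-c_2 D)$, which is the claim with $M = 4M_0$, $C = 4$, and $c = c_2$.

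There is no substantial obstacle here; the argument is entirely classical and could alternatively be cited from, e.g., \cite{RVRectangle, RVLittlewoodOfford} or a standard textbook. The only points that require mild care are tracking that all constants depend solely on the uniform subgaussian moment bound $B$ (and not on $N$ or $n$), and phrasing the estimate in terms of $\max\{N,n\}$, which is handled by the bound $n + N \le 2\max\{N,n\}$.
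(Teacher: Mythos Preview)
Your argument is correct and is precisely the standard $\varepsilon$-net approach the paper points to: the paper does not spell out a proof but simply cites a textbook exercise (padding to a square matrix) and notes that ``one can apply the same net argument as in Proposition~\ref{prop:opnorm}.'' Your reduction to the real parts via $\|A\|\le\|\Xi\|+\|\Xi'\|$ is a harmless cosmetic choice; one could equally run the net directly on the complex spheres as in Proposition~\ref{prop:opnorm}, but the content is the same.
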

\begin{proof}
The result essentially follows immediately from \cite[Exercise 2.33]{Tbook}, which applies only to square matrices.  One can easily obtain the bound for rectangular matrices by padding the matrix with zeros to create a square matrix.  Alternatively, one can apply the same net argument as in Proposition \ref{prop:opnorm}.  
\end{proof}

We begin with the proofs of Theorems \ref{thm:eigenvector} and \ref{thm:eigenvector-small}.  

\begin{proof}[Proof of Theorem \ref{thm:eigenvector}]
Without loss of generality, assume $1 \geq t \geq e^{-\log^2 n}$ (as the bound is trivial when $t \geq 1$).  
Let $M \geq 1$ be the constant from Lemma \ref{lemma:norm}.  Let $\eps, \delta$ be positive values to be chosen later, and take
\[ q := \Prob( \loc(A,m,\delta) \text{ and } \|A \| \leq M \sqrt{n} ). \]
Proposition \ref{prop:deloc} implies that
\[ q \leq \frac{9}{\delta^2} \left( \frac{ n e}{m} \right)^m p_o, \]
where $p_0$ satisfies \eqref{eq:deloc:p_0}.  
Choose $\delta$ in terms of $\eps$ via the following identity:
\begin{equation} \label{eq:def:delta}
	6\delta M \sqrt{n} = \eps( \sqrt{n} - \sqrt{n - m -1}). 
\end{equation}
In other words, once we specify $\eps$, $\delta$ will also be determined.  Using Theorem \ref{thm:rectangular}, we find
\[ p_0 \leq (C \eps)^{2m + 1} + e^{-cn}, \]
and hence
\[ q \ll \frac{\eps}{\delta^2} \left( \frac{n e}{m} C^2 \eps^2 \right)^m + \frac{1}{\delta^2} \left( \frac{ne}{m} \right)^m e^{-cn}. \]

Returning to \eqref{eq:def:delta}, we see
\[ \delta \geq \frac{ \eps m}{12 M n}. \]
This implies that
\[ q \ll \left( \left( \frac{n}{m} \right)^{1 + 2/m} e C^2 \eps^{2 - 1/m} \right)^m + \frac{1}{\eps^2} \left( \frac{n}{m} \right)^2 \left( \frac{ n e}{m} \right)^m e^{-cn}. \]
We now choose $\eps$.  Indeed, take
\begin{equation} \label{eq:def:eps}
	\eps := t^{m/(2m-1)} \left( \frac{m}{n} \right)^{(m+2)/(2m-1)}, 
\end{equation}
and recall that this choice of $\eps$ also determines $\delta$ by \eqref{eq:def:delta}.  In addition, this choice implies that
\[ \left( \frac{n}{m} \right)^{1 + 2/m} \eps^{2-1/m} = t, \]
which means
\begin{equation} \label{eq:bndq1}
	q \ll (C^2 e t)^m + \frac{1}{\eps^2} \left( \frac{n}{m} \right)^2 \left( \frac{ n e}{m} \right)^m e^{-cn}. 
\end{equation}

We now simplify the expression for $\eps$ given in \eqref{eq:def:eps} using the fact that $m \geq \log^2 n$.  Indeed, in this case it follows that
\[ \left( \frac{m}{n} \right)^{(m+2)/(2m-1)} = \Theta \left( \left( \frac{m}{n} \right)^{1/2} \right) \]
and, using the fact that $1 \geq t \geq e^{-\log^2 n}$, 
\[ t^{m/(2m-1)} = \Theta( \sqrt{t} ). \]
We conclude that
\begin{equation} \label{eq:eps1}
	\eps = \Theta\left( \sqrt{t} \left( \frac{m}{n} \right)^{1/2} \right), 
\end{equation}
and hence
\begin{equation} \label{eq:delta1}
	\delta \gg \sqrt{t} \left( \frac{m}{n} \right)^{3/2}. 
\end{equation}

Returning to \eqref{eq:bndq1}, we use \eqref{eq:eps1} and $t \geq e^{-\log^2 n}$ to see that 
\[ q \ll (C^2 e t)^m + e^{-c' n} \]
for $m \leq c' n$, where $c' > 0$ is a sufficiently small constant.  
In conclusion, we have now shown that
\[ \Prob( \loc(A,m,\delta) \text{ and } \|A \| \leq M \sqrt{n} ) \ll (C^2 e t)^m + e^{-c' n}  \]
for some $\delta > 0$ which satisfies \eqref{eq:delta1}.  In view of Lemma \ref{lemma:norm}, the proof is complete.  
\end{proof}

\begin{proof}[Proof of Theorem \ref{thm:eigenvector-small}]
The proof is similar to the proof of Theorem \ref{thm:eigenvector}.  Without loss of generality assume $1 \geq t \geq e^{-c' n}$ (as the bound is trivial when $t \geq 1$) for a sufficiently small constant $c' > 0$ to be chosen later.  
Let $M \geq 1$ be the constant from Lemma \ref{lemma:norm}.  Let $\eps, \delta$ be positive values to be chosen later, and take
\[ q := \Prob( \loc(A,m,\delta) \text{ and } \|A \| \leq M \sqrt{n} ). \]
Proposition \ref{prop:deloc} implies that
\[ q \leq \frac{9}{\delta^2} \left( \frac{ n e}{m} \right)^m p_o, \]
where $p_0$ satisfies \eqref{eq:deloc:p_0}.  
Set $\delta$ in terms of $\eps$ again via \eqref{eq:def:delta}, so that $\delta$ is determined completely once we select $\eps$.  Using Theorem \ref{thm:nearlysquare} and the bound $m \leq \log^2 n$, we find that
\[ p_0 \leq (C \eps \log n )^{2(m+1)} + e^{-cn}. \]
Thus, we have
\[ q \ll (\log n)^2 \frac{\eps^2}{\delta^2} \left( \frac{ n e C^2 \log^2 n }{m} \eps^2 \right)^{m} + \frac{1}{\delta^2} \left( \frac{ n e}{m} \right)^m e^{-cn}. \]
From \eqref{eq:def:delta}, we see that
\begin{equation} \label{eq:deltabnd2}
	\delta \geq \frac{\eps}{12 M } \frac{m}{n}, 
\end{equation}
and so
\[ q \ll \left(  \left( \frac{n}{m} \right)^{1 + 2/m} e C^2 (\log n)^{2 + 2/m} \eps^2 \right)^m + \frac{1}{\delta^2} \left( \frac{ n e}{m} \right)^m e^{-cn}. \]

Define $\eps$ by the following identity:
\[ \left( \frac{n}{m} \right)^{1  + 2/m} (\log n)^{2 + 2/m} \eps^2 = t. \]
This implies that
\[ \eps = \frac{ \sqrt{t}}{ (\log n)^{1 + 1/m}} \left( \frac{m}{n} \right)^{(m+2)/2m} \geq \frac{ \sqrt{t}}{ \log^{2} n} \left( \frac{m}{n} \right)^{(m+2)/2m}. \]
In view of \eqref{eq:deltabnd2} we see that
\begin{equation} \label{eq:deltabnd3}
	\delta \gg \left( \frac{m}{n} \right)^{3/2 + 1/m} \frac{ \sqrt{t}}{ \log^{2} n }. 
\end{equation}
In addition, we obtain
\[ q \ll (C^2 e t )^m + \frac{n^5}{t} (\log n)^4 \left( n e \right)^{\log^2 n} e^{-c n}. \]
Using the assumption that $t \geq e^{-c' n}$ and taking $c'$ sufficiently small, we deduce that
\[ q \ll (C^2 e t )^m + e^{-c'' n} \]
for some constant $c'' > 0$.  

In conclusion, we have now shown that
\[ \Prob( \loc(A,m,\delta) \text{ and } \|A \| \leq M \sqrt{n} ) \ll (C^2 e t)^m + e^{-c'' n}  \]
for some $\delta > 0$ which satisfies \eqref{eq:deltabnd3}.  In view of Lemma \ref{lemma:norm}, the proof is complete.  
\end{proof}

We now turn to the proofs of Theorem \ref{thm:eigenvectorreal} and \ref{thm:eigenvector-smallreal}.  We will need the following least singular value bound for real iid matrices, adopted from \cite{RVRectangle}.  
\begin{theorem} \label{thm:RVlsv}
Let $A$ be an $N \times n$ real random matrix, $N \geq n$, whose elements are independent copies of a mean zero subgaussian random variable with unit variance.  Then for every $\eps > 0$ and $\lambda \in \mathbb{R}$ with $|\lambda| \leq M \sqrt{N}$ for some $M \geq 1$, we have
\[ \Prob \left( s_n(A - \lambda) \leq \eps ( \sqrt{N} - \sqrt{n-1} ) \right) \leq (C\eps)^{N-n+1} + e^{-cN} \]
where $C, c > 0$ depend (polynomially) only on the subgaussian moment of the entries and $M$.  
\end{theorem}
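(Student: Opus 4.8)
The plan is to reduce Theorem \ref{thm:RVlsv} to the main theorem of \cite{RVRectangle}, which is precisely this statement in the special case $\lambda = 0$, and then to observe that the entire argument of \cite{RVRectangle} is robust to the deterministic shift $A \mapsto A - \lambda$ whenever $|\lambda| \le M\sqrt{N}$. This is the real analogue of the argument carried out for genuinely complex matrices in Section \ref{sec:proof:rectangular}, and it is in fact simpler, since the small-ball input is the one-dimensional Littlewood--Offord bound rather than its two-dimensional counterpart; that is exactly why the exponent is $N - n + 1$ rather than $2(N-n+1)-1$. First I would dispose of the operator norm: by the triangle inequality and Lemma \ref{lemma:norm} one has $\|A - \lambda\| \le \|A\| + |\lambda| \le C\sqrt{N}$ on an event of probability at least $1 - e^{-cN}$, with $C$ depending only on $B$ and $M$. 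This gives the real analogue of Proposition \ref{prop:opnorm} for $A-\lambda$, and all subsequent estimates are made on this good event, which accounts for the $e^{-cN}$ term in the conclusion.

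Next I would decompose $S^{n-1}_{\mathbb{R}}$ into $\Comp(\delta,\rho) \cup \Incomp(\delta,\rho)$ as in \cite{RVRectangle, RVLittlewoodOfford}. For the compressible part, the standard net argument (the real analogue of Lemma \ref{lem:compressible}) shows $\inf_{x \in \Comp(\delta,\rho)} \|(A-\lambda)x\|_2 \ge c\sqrt{N}$ with probability $1 - e^{-cN}$: passing from $A$ to $A - \lambda$ only changes the means of the diagonal entries, each of which becomes a subgaussian variable with mean $-\lambda$ and the same subgaussian moment \emph{about its mean}, and the compressible net argument is insensitive to the means of the entries. Since $\eps(\sqrt N - \sqrt{n-1}) \le \eps\sqrt N$, this already beats the required bound on $\Comp(\delta,\rho)$ once $\eps$ is below an absolute constant (the case of larger $\eps$ being trivial), so it remains to treat incompressible vectors.

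For the incompressible part I would run the distance-based machinery of \cite{RVRectangle}: via the invertibility-to-distance reduction (Lemma \ref{lem:inverttodistance}, or Lemma \ref{lem:invertbydistance} when $N-n$ is small), and possibly the uniform-over-$\Spread_J$ refinement, the problem reduces to bounding $\Prob(\dist(X_k, H_k) < \eps)$, where $X_k = A_k - \lambda e_k$ is a column of $A - \lambda$ and $H_k$ is the span of the remaining columns, which is independent of $X_k$. Both ingredients needed here survive the shift: translating $X_k$ by the deterministic vector $-\lambda e_k$ does not affect $\dist(X_k, H_k + v)$ for the relevant affine shifts $v$, and, as noted in the remark following Theorem \ref{thm:smallball}, the small-ball bound \cite[Theorem 3.3]{RVRectangle} and the attendant structure theorem for random subspaces are proved by symmetrization and therefore apply to independent summands with arbitrary, possibly distinct means under a uniform subgaussian bound. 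Combining these exactly as in \cite{RVRectangle} yields $\Prob(\dist(X_k, H_k) < \eps) \le (C\eps)^{N-n+1} + e^{-cN}$, hence the same bound for $\inf_{x \in \Incomp(\delta,\rho)}\|(A-\lambda)x\|_2$ after a union bound over $k$. A final union of the compressible and incompressible estimates, absorbing the $e^{-cN}$ terms, gives the claim. The only real work is bookkeeping, namely checking that each cited lemma of \cite{RVRectangle} keeps its constants (polynomially in $B$ and $M$) under the shift; I expect that to be the most tedious step, but there is no new phenomenon, since $\lambda$ enters only through the means of the diagonal entries and through the bound $|\lambda| \le M\sqrt N$ used for the operator norm, both of which the cited arguments already tolerate.
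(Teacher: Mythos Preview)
Your proposal is correct and is essentially the same as the paper's treatment: the paper does not give a separate proof of Theorem \ref{thm:RVlsv} but simply notes that the $\lambda=0$ case is \cite[Theorem 1.1]{RVRectangle} and that the argument there adapts to the shifted case exactly as was carried out explicitly for the complex analogue in Section \ref{sec:proof:rectangular}. Your outline is in fact more detailed than what the paper provides, but the underlying strategy---check that the operator-norm bound, the compressible net argument, and the distance/structure machinery of \cite{RVRectangle} all tolerate a deterministic diagonal shift of size $O(\sqrt N)$---is identical.
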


The $\lambda = 0$ case of this theorem appears as \cite[Theorem 1.1]{RVRectangle}.  However, a close inspection of their proof confirms that their argument can be adapted to the shifted case, in the same way that we have explicitly done in the proof of Theorem \ref{thm:rectangular}.

\begin{proof}[Proof of Theorem \ref{thm:eigenvectorreal}]
The proof is similar to the proof of Theorem \ref{thm:eigenvector}.  Without loss of generality, assume $1 \geq t \geq e^{-c' n}$ for some constant $c' > 0$ to be chosen later (as the bound is trivial when $t \geq 1$).  By \cite[Proposition 2.3]{RVRectangle}, there exists $M \geq 1$ such that 
\begin{equation} \label{eq:normreal}
	\Prob( \|A\| \leq M \sqrt{n}) \geq 1 - C_0 e^{-c_0 n}, 
\end{equation}
where $M, C_0, c_0 > 0$ depend only on the subgaussian moment of the entries.  
Let $\eps, \delta$ be positive values to be chosen later, and take
\[ q := \Prob( \loc_{\mathbb{R}}(A,m,\delta) \text{ and } \|A \| \leq M \sqrt{n} ). \]
Proposition \ref{prop:delocreal} implies that
\[ q \leq \frac{3}{\delta} \left( \frac{ n e}{m} \right)^m p_o, \]
where $p_0$ satisfies \eqref{eq:deloc:p_02}.  
Choose $\delta$ in terms of $\eps$ via \eqref{eq:def:delta}, and again note that once we specify $\eps$, $\delta$ will also be determined.  Using Theorem \ref{thm:RVlsv}, we find
\[ p_0 \leq (C \eps)^{m + 1} + e^{-cn}, \]
and hence
\[ q \ll \frac{\eps}{\delta} \left( \frac{n e}{m} C \eps \right)^m + \frac{1}{\delta} \left( \frac{ne}{m} \right)^m e^{-cn}. \]

Returning to \eqref{eq:def:delta}, we see
\[ \delta \geq \frac{ \eps m}{12 M n}. \]
This implies that
\[ q \ll \left( \left( \frac{n}{m} \right)^{1 + 1/m} e C \eps \right)^m + \frac{1}{\eps} \left( \frac{ n e}{m} \right)^{m+1} e^{-cn}. \]
We now choose $\eps$.  Indeed, take
\begin{equation} \label{eq:def:eps_real}
	\eps := t \left( \frac{m}{n} \right)^{ (m+1)/m }, 
\end{equation}
and recall that this choice of $\eps$ also determines $\delta$ by \eqref{eq:def:delta}.  In addition, this choice implies that
\begin{equation} \label{eq:bndq1_real}
	q \ll (C e t)^m + \frac{1}{\eps} \left( \frac{ n e}{m} \right)^{m+1} e^{-cn}. 
\end{equation}

We now simplify the expression for $\eps$ given in \eqref{eq:def:eps_real} using the fact that $m \geq \log^2 n$.  Indeed, in this case it follows that
\[ \left( \frac{m}{n} \right)^{(m+1)/(m)} = \Theta \left( \frac{m}{n} \right).  \]
We conclude that
\begin{equation} \label{eq:eps1_real}
	\eps = \Theta\left( t \frac{m}{n} \right), 
\end{equation}
and hence
\begin{equation} \label{eq:delta1_real}
	\delta \gg t \left( \frac{m}{n} \right)^{2}. 
\end{equation}

Returning to \eqref{eq:bndq1_real}, we use \eqref{eq:eps1_real} and $t \geq e^{-c' n}$ to see that 
\[ q \ll (C e t)^m + e^{-c' n} \]
for $m \leq c' n$, where $c' > 0$ is a sufficiently small constant.  
In conclusion, we have now shown that
\[ \Prob( \loc_{\mathbb{R}}(A,m,\delta) \text{ and } \|A \| \leq M \sqrt{n} ) \ll (C e t)^m + e^{-c' n}  \]
for some $\delta > 0$ which satisfies \eqref{eq:delta1_real}.  In view of \eqref{eq:normreal}, the proof is complete.  
\end{proof}

Theorem \ref{thm:eigenvector-smallreal} follows from similar arguments as those presented in the proofs of Theorems \ref{thm:eigenvector-small} and \ref{thm:eigenvectorreal}; we omit the details. 

We now turn to the proofs of results from Section \ref{sec:main:normal}.  

For the proof of Theorem \ref{thm:nogapsasym}, we first recall a result from \cite{NV}.
\begin{theorem}[Theorem 1.4, \cite{NV}] \label{thm:normalcoordinates}
For a $n-1 \times n$ genuinely complex random matrix $A$, let $x$ be a vector normal to all the rows.  There exists a positive constants $c$ and $c'$ such that for any $d$-tuple $(i_1, \dots, i_d)$ with $d = n^c$ and $\Omega \in \C^d$,
$$
|\P((\sqrt{n} x_{i_1}, \dots, \sqrt{n} x_{i_d}) \in \Omega) - \P(\mathbf{g}_{\C, 1}, \dots, \mathbf{g}_{\C, d}) \in \Omega )| \leq d^{-c'}.
$$
\end{theorem}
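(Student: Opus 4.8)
The plan is to combine an explicit ``leave-$d$-columns-out'' representation of the normal vector with the universality (replacement) method, using the rectangular least singular value bounds of Section~\ref{sec:proof:rectangular} to control stability and anti-concentration, and taking the complex Gaussian ensemble as the exactly solvable base case. By the exchangeability of the columns of $A$ we may assume $\{i_1,\dots,i_d\}=[d]$. Write $A=[\,B\mid D\,]$, where $B\in\MM^{\C}_{(n-1)\times d}$ is the matrix of the first $d$ columns and $D\in\MM^{\C}_{(n-1)\times(n-d)}$ that of the remaining ones, and let $P$ be the orthogonal projection of $\C^{n-1}$ onto $\range(D)^\perp$ (of dimension $d-1$ almost surely). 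Since $Av=0$ forces $Dv_{[d]^c}=-Bv_{[d]}$ we get $Bv_{[d]}\in\range(D)$, i.e.\ $PBv_{[d]}=0$, so $v_{[d]}$ spans the (a.s.\ one‑dimensional) kernel of the $(n-1)\times d$ matrix $M:=PB$; moreover $v_{[d]^c}=-(D^*D)^{-1}D^*Bv_{[d]}$, so $\|v_{[d]}\|_2$ is pinned down by $\|v\|_2=1$. Writing $u$ for the unit vector spanning $\ker M$ (absorbing the phase of $v$), this yields the identity
$$\sqrt n\, v_{[d]}=R\,u,\qquad R^2=n\|v_{[d]}\|_2^2=\frac{n}{1+\bigl\|(D^*D)^{-1}D^*Bu\bigr\|_2^2},$$
a function of $(B,D)$; it then suffices to compare $(R,u)$ with $(\|\mathbf g\|_2,\ \mathbf g/\|\mathbf g\|_2)$, where $\mathbf g=(\mathbf g_{\C,1},\dots,\mathbf g_{\C,d})\in\C^d$ is the standard complex Gaussian normalized (as in \cite{NV}) to match the first two moments of $\sqrt n\,v_{i_1}$.

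Next I would compute the Gaussian base case conditionally on $D$. If the entries of $B$ are replaced by independent complex Gaussians $B'$ with matching first two moments, keeping $D$ (hence $P$) fixed, then $M=PB'$ is \emph{exactly} a Gaussian matrix whose columns are i.i.d.\ complex Gaussian in $\range(D)^\perp$; by invariance under right multiplication by unitaries its unit kernel vector $u$ is exactly uniform on $S_\C^{d-1}$. Moreover $P(B'u)=0$ forces $B'u\in\range(D)$, and the component of $B'$ inside $\range(D)$ is fresh Gaussian given $u$, so $u$ is independent of $R$ and $\mathcal L(R^2\mid D)$ is the law of $n\bigl(1+\sum_{k=1}^{n-d}|z_k|^2/\tilde\sigma_k^2\bigr)^{-1}$, where the $z_k$ are i.i.d.\ complex Gaussian and $\tilde\sigma_1^2\ge\cdots\ge\tilde\sigma_{n-d}^2>0$ are the eigenvalues of $D^*D$. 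On the event that $D$ is well conditioned in the Marchenko--Pastur sense --- $s_{n-d}(D)\gg d/\sqrt n$ (which holds with probability $1-(C\e)^{2d-1}-e^{-cn}$ by Theorem~\ref{thm:rectangular} applied to $D$, also for the genuinely complex non‑Gaussian $D$), together with $\tr((D^*D)^{-1})=\Theta(n/d)$ and $\tr((D^*D)^{-2})=\Theta(n^2/d^3)$ --- a Berry--Esseen bound for $\sum_k|z_k|^2/\tilde\sigma_k^2$ shows $\mathcal L(R^2\mid D)$ is within $O(d^{-1/2})$ in total variation of the law of $\|\mathbf g\|_2^2$, hence $\mathcal L(Ru\mid D)$ is within $O(d^{-1/2})$ of $\mathcal L(\mathbf g)$.

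It remains to Gaussianize $B$, i.e.\ to show that, conditionally on a typical $D$, swapping the entries of $B$ to Gaussians one at a time changes $\mathcal L(\sqrt n\,v_{[d]})=\mathcal L(Ru)$ by at most $d^{-c'}$. This is a Lindeberg-type argument: expand a smooth test functional of $(R,u)$ to third order in a single entry of $B$, match two moments, and sum the $\Theta(dn)$ errors, each of size $O(n^{-3/2})$ times a third‑derivative bound. The derivative bounds hold provided $M=PB$ has a well‑separated smallest nonzero singular value (so $u$ varies smoothly with $B$), $\|B\|\ll\sqrt n$, and $\|(D^*D)^{-1}\|\ll n/d^2$; the first follows from the least singular value/distance estimates of Section~\ref{sec:proof:rectangular} applied to $B^*$ restricted to $\range(D)^\perp$ (a tall matrix with independent genuinely complex rows, up to a fixed isometry) after conditioning on $D$ in the structural event of Theorem~\ref{thm:structure}, and the remaining two from Lemma~\ref{lemma:norm} and standard Wishart estimates; a mild a~priori delocalization of $u$ (from Theorem~\ref{thm:nogaps-small}) replaces the crude bound $|u_k|\le1$ where it matters. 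Finally, to pass from smooth test functionals to indicators of a Borel set $\Omega$ one first makes the entries absolutely continuous (the standard reduction at the start of Section~\ref{sec:proof:rectangular}), mollifies $\one_\Omega$ at a small scale $\eta$, and absorbs the error via an anti-concentration bound $\LL(\sqrt n\,v_{[d]},\eta)\ll(C\eta)^{c_0}$, which through the identity above again reduces to the small-ball estimates of Section~\ref{sec:proof:rectangular}. Taking $d=n^c$ with $c$ small enough relative to $c'$ makes the combined error $e^{-\Omega(d)}+\mathrm{poly}(d)\,n^{-1/2}+O(d^{-1/2})$ at most $d^{-c'}$.

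The main obstacle is precisely this Gaussianization step: the functional $B\mapsto(R,u)$ degenerates where $M=PB$ loses rank or where $D$ is ill conditioned, and producing derivative and small-ball bounds uniform enough for the swapping sum and the mollification error to be summable is exactly where one needs the least singular value bounds for \emph{genuinely complex rectangular} matrices proved here --- the $\e^2$ (rather than $\e$) of Theorem~\ref{thm:rectangular} --- instead of their square or real-valued analogues. A secondary subtlety is that the replacement must be carried out with $D$ held fixed (swapping the $\Theta(n^2)$ entries of the ill-conditioned $D$ entrywise would be far too costly), which is why the Gaussian base case is done conditionally on $D$ and why the Marchenko--Pastur regularity of $D$ must be established for genuinely complex, not merely Gaussian, $D$.
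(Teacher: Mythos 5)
First, a point of order: the paper does not prove this statement. It is quoted verbatim as Theorem 1.4 of [NV] and used as a black box in the proof of Theorem \ref{thm:nogapsasym}, so there is no in-paper argument to compare yours against; what you have written is a proposal for reproving a theorem of Nguyen and Vu that is the main result of a separate paper. For what it is worth, your route appears genuinely different from theirs: [NV] proceeds through the cofactor representation of the normal vector, $x_i\propto(-1)^i\det(A^{(i)})$ with $A^{(i)}$ the square minor obtained by deleting the $i$-th column, and deduces the statement from universality results for random determinants, whereas you represent $\sqrt n\,x_{[d]}$ through the kernel of the projected block $PB$ and Gaussianize $B$ by a Lindeberg swap conditionally on $D$. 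Your algebraic representation and the exact Gaussian base case (unitary invariance of $PB'$, hence $u$ uniform on $S_{\C}^{d-1}$ and independent of $R$, with $R^2=n(1+\sum_k|z_k|^2/\tilde\sigma_k^2)^{-1}$) are correct as far as they go.

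As a proof, however, the proposal has two substantive gaps. (i) The conditional base case needs much more than the stated bounds $\tr((D^*D)^{-1})=\Theta(n/d)$ and $\tr((D^*D)^{-2})=\Theta(n^2/d^3)$: to conclude that the law of $R^2$ given $D$ is within $O(d^{-1/2})$ of the law of $\|\mathbf g\|_2^2$, you must identify the mean and variance of $\sum_k|z_k|^2/\tilde\sigma_k^2$ to relative precision $O(d^{-1/2})$, i.e.\ you need first-order hard-edge spectral asymptotics, with fluctuation control, for the nearly square $(n-1)\times(n-d)$ genuinely complex matrix $D$. Nothing in Section \ref{sec:proof:rectangular} supplies this: Theorem \ref{thm:rectangular} only bounds $s_{n-d}(D)$ from below, and a lower bound on the smallest singular value says nothing about the value of $\tr((D^*D)^{-1})$, which is dominated by the bottom of the spectrum; with only order-of-magnitude control the limit law of $R^2$ need not be chi-square at all. (ii) The Lindeberg step is asserted rather than carried out: $B\mapsto(R,u)$ is an implicit function of $B$ through $\ker(PB)$ and $(D^*D)^{-1}D^*B$, and you need third-derivative bounds uniform enough to survive summation over the $\Theta(dn)$ entries of $B$ together with the mollification at scale $\eta$. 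The required spectral-gap lower bound for $s_{d-1}(PB)$ on an event of probability $1-o(d^{-c'})$, and the anti-concentration estimate for $\sqrt n\,v_{[d]}$ used to absorb the mollification error, are each comparable in difficulty to the least singular value theorems of this paper and do not follow from them. So the proposal is a plausible strategy outline, not a proof; within this paper the correct move is the one the authors make, namely citing [NV].
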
  
We model our proof of Theorem \ref{thm:nogapsasym} after the proof of Theorem 5.1 in \cite{OVWsurvey}.
\begin{proof}[Proof of Theorem \ref{thm:nogapsasym}] 
Let $Z$ and $Z'$ be standard normal distributions with cumulative distribution function $\Phi(x)$.  Recall that $F(x)$ is the cumulative distribution function of $Z^2/2 + Z'^2/2$.  

\begin{align*}
F(x) &= \frac{1}{2 \pi} \int \int_{z^2 + z'^2 \leq 2x} e^{-z^2/2} e^{-z'^2/2} dz dz'  \\
&= \int_0^{\sqrt{2x}} e^{-r^2/2} r dr \\
&= 1 - e^{-x}
\end{align*}

For convenience, we introduce the function 
$$
G(x) := F(x^2) = 1 - e^{-x^2}.
$$
By direct calculation,
\begin{align*}
- \int_{1 - \delta}^1 H(u) du &= \int_0^\delta F^{-1}(u) du \\
&= \int_0^{\sqrt{F^{-1}(\delta)}} 2 x^3 F'(x) dx \\
&= \int_0^{G^{-1}(\delta)} x^2 G'(x) dx \\
&= - \int_0^\delta \log(1-u) du
\end{align*}
and 
$$
- \int_0^\delta H(u) du = \int_{1- \delta}^1 F^{-1}(u) = - \int_{1-\delta}^1 \log(1-u) du.
$$

Thus, it suffices to show
\begin{equation}\label{eq:maxsubset}
\Big|\max_{S \subset [n]: |S| = \lfloor \delta n \rfloor } \|x_S\|_2^2 +  \int_0^\delta \log(1-u) du \Big | \leq \e
\end{equation}
and
\begin{equation} \label{eq:minsubset}
\Big|\min_{S \subset [n]: |S| = \lfloor \delta n \rfloor } \|x_S\|_2^2 +  \int_{1-\delta}^1 \log(1-u) du \Big | \leq \e
\end{equation}
In fact, we can simply focus on (\ref{eq:minsubset}) as (\ref{eq:maxsubset}) follows from the identity
$$
\max_{S \subset [n]: |S| = \lfloor \delta n \rfloor } \|x_S\|_2^2 + \min_{S \subset [n]: |S| = \lfloor \delta n \rfloor } \|x_S^c\|_2^2 = 1
$$
and $\int_0^1 \log (1-u) du = 1$.

Define 
$$
N(c,k) := \sum_{j=1}^n \mathbf{1}_{\{c(k-1) \leq \sqrt{n} |v(j)| < ck \} }
$$
Let $Z$ be a complex gaussian and define
$$
f(c,k) := n \P( c(k-1) \leq  |Z| < ck).
$$
Note that 
$$
f(c,k) =  n \big(G(c k) - G(c (k-1))\big).
$$
By Theorem \ref{thm:normalcoordinates}, we have that 
$$
\P(c(k-1) \leq \sqrt{n} |x(j)| < ck) = \P(c (k-1) \leq |Z| < ck) (1+ o(1))
$$
uniformly for all $1 \leq j \leq n$.  Thus,
$$
\E N(c,k) = (1+ o(1)) f(c,k).  
$$
Similarly, we can verify that
$$
\text{Var}(N(c,k)) = o(n^2).
$$
By Chebyshev's inequality, we can conclude that
\begin{equation} \label{eq:N(c,k)}
N(c,k) = (1+o(1)) f(c,k)
\end{equation}
with probability $1-o(1)$.

We choose $c>0$ and $k_0 \in \N$ so that 
\begin{equation} \label{eq:cdefinition}
 c \Big [G^{-1}(\delta) \Big]^2 < \frac{\eps}{2}
\end{equation}
and 
$ck_0 = G^{-1}(\delta)$.  This definition ensures that
\begin{equation} \label{eq:delta}
\sum_{k=1}^{k_0} G(c k) - G(c (k-1)) = G(c k_0) - G(0) = \delta.
\end{equation}
Additionally, we have
\begin{align} \label{eq:lowereps/2}
&\left| \sum_{k=1}^{k_0} c^2 (k-1)^2   (G(ck) - G(c(k-1))) - \int_0^{G^{-1}(\delta)} x^2 G'(x) dx \right| \nonumber \\
&\qquad = \left|\sum_{k=1}^{k_0} c^2 (k-1)^2 (G(ck) - G(c(k-1))) - \int_{c(k-1)}^{ck} x^2 G'(x) dx \right| \nonumber \\
&\qquad = \left| \sum_{k=1}^{k_0} \left [  (c^2 (k-1)^2 - c^2 k^2) G(ck) + 2 \int_{c(k-1)}^{ck}  x G(x) dx \right] \right| \\
&\qquad = 2 \left| \sum_{k=1}^{k_0} \int_{c(k-1)}^{ck}  x (G(x) - G(ck)) dx  \right| \nonumber \\
&\qquad \leq  c \left[ G^{-1}(\delta) \right]^2 \nonumber \\
&\qquad < \frac{\e}{2} \nonumber 
\end{align}
by integration by parts and (\ref{eq:cdefinition}).  The first inequality follows from 
the mean value theorem, the identity $G'(x) = 2x e^{-x^2}$ and the bound $|G'(x)| \leq \sqrt{2} e^{-2} \leq 1$.
By an identical argument, we can show that
\begin{equation} \label{eq:uppereps/2}
\left| \sum_{k=1}^{k_0} c^2 k^2 (G(ck) - G(c(k-1))) - \int_0^{G^{-1}(\delta)} x^2 G'(x) dx \right| < \frac{\eps}{2}.
\end{equation}
By (\ref{eq:N(c,k)}), for any $1 \leq k \leq k_0+1$,
\begin{equation} \label{eq:f(c,k)}
N(c,k) = (1+o(1)) f(c,k) = (1+ o(1)) 2n  (G(ck) - G(c(k-1)))
\end{equation}
with probability $1-o(1)$.  (\ref{eq:delta}) implies that
$$
\sum_{k=1}^{k_0} f(c,k) = \delta n.
$$ 
Therefore, by a union bound, with probability $1-o(1)$, 
$$
\sum_{k=1}^{k_0} N(c,k) = (1 + o(1)) \delta n = \lceil \delta n \rceil + o(n).
$$

We have the two-sided bound
$$
\sum_{k=1}^{k_0} c^2 (k-1)^2 N(c,k) \leq n \min_{S \subset [n]: |S| = \sum_{k=1}^{k_0} N(c,k) } \|x_S\|_2^2 \leq \sum_{k=1}^{k_0} c^2 k^2 N(c,k).
$$
With probability $1-o(1)$, there exists a sequence $\tau_n$ with $\tau_n \rightarrow 0$ such that 
\begin{align*}
\sum_{k=1}^{k_0} c^2 &(k-1)^2 N(c,k) - \tau_n c^2 k_0^2 N(c,k_0) \\
&\leq \min_{S \subset [n]: |S| = \lceil \delta n \rceil} \|x_S\|_2^2 \leq \sum_{k=1}^{k_0} c^2 k^2 N(c,k) + \tau_n c^2 (k_0+1)^2 N(c, k_0+1).
\end{align*}

By (\ref{eq:f(c,k)}), we also have that
\begin{align*}
\sum_{k=1}^{k_0} c^2 &(k-1)^2 (G(ck) - G(c(k-1))) (1 + o(1)) \\
&\leq \min_{S \subset [n]: |S| = \lceil \delta n \rceil} \|x_S\|_2^2 \leq \sum_{k=1}{k_0} c^2 k^2 (G(ck) - G(c(k-1))) (1+ o(1))
\end{align*} 
with probability $1- o(1)$.  Finally, combining (\ref{eq:lowereps/2}) and (\ref{eq:uppereps/2}), we can conclude that
$$
\left| \min_{S \subset [n]: |S| = \lceil \delta n \rceil} \|x_S\|_2^2 - \int_0^{G^{-1}(\delta)} x^2 G'(x) dx \right| \leq \eps
$$ 
with probability $1-o(1)$.
\end{proof}

\begin{proof}[Proof of Theorem \ref{thm:nogaps}]
The proof closely mirrors the proof of Theorem \ref{thm:eigenvector}.  Without loss of generality, assume $1 \geq t \geq e^{-\log^2 n}$ (as the bound is trivial when $t \geq 1$).  
Let $M \geq 1$ be the constant from Lemma \ref{lemma:norm}.  Let $\eps, \delta$ be positive values to be chosen later, and take
\[ q := \Prob( \loc_{0}(A,M, m,\delta) \text{ and } \|A \| \leq M \sqrt{n} ). \]
(Here, we have set $\lambda_0 = 0$ in the definition of $\loc_{\lambda_0}(A,M,m,\delta)$.)  
Proposition \ref{prop:delocgeneral} implies that
\[ q \leq \left( \frac{ n e}{m} \right)^m p_o, \]
where $p_0$ satisfies \eqref{eq:p_0def1}.  
Choose $\delta$ in terms of $\eps$ via the following identity:
\begin{equation} \label{eq:def:delta2}
	6\delta M \sqrt{n} = \eps( \sqrt{n-1} - \sqrt{n - m -1}). 
\end{equation}
In other words, once we specify $\eps$, $\delta$ will also be determined.  Using Theorem \ref{thm:rectangular}, we find
\[ p_0 \leq (C\eps)^{2m-1} + C e^{-cn}, \]
and so
\[ q \ll \left( \frac{n e C^2 }{m} \eps^{(2m-1)/m} \right)^m + \left( \frac{ne}{m} \right)^m e^{-cn}. \]

Choose $\eps > 0$ such that
\[ \frac{n}{m} \eps^{(2m-1)/m} = t. \]
This choice implies that
\[ q \ll \left( e C^2 t \right)^m + \left( \frac{ne}{m} \right)^m e^{-cn} \]
and
\[ \eps = \left( t \frac{m}{n} \right)^{m/(2m-1)}. \]
We now use the assumption that $m \geq \log^2 n$ to simplify this expression for $\eps$.  Indeed, in this case it follows that
\[ \left( \frac{m}{n} \right)^{m/(2m-1)} = \Theta \left( \left( \frac{m}{n} \right)^{1/2} \right). \]
Similarly, since $1 \geq t \geq e^{-\log^2 n}$, we have
\[ t^{m/(2m-1)} = \Theta( \sqrt{t} ). \]
Thus, we conclude that
\[ \eps = \Theta \left( \sqrt{t} \left( \frac{m}{n} \right)^{1/2} \right). \]
Combining this with \eqref{eq:def:delta2}, we see that
\begin{equation} \label{eq:normaldelta}
	\delta \gg \frac{m}{n} \eps \gg \sqrt{t} \left( \frac{m}{n} \right)^{3/2}. 
\end{equation}

In addition, there exists a sufficiently small constant $c' > 0$ such that 
\[ q \ll \left( e C^2 t \right)^m + \left( \frac{ne}{m} \right)^m e^{-cn} \ll \left( e C^2 t \right)^m + e^{-c' n} \]
for $m \leq c' n$.  To conclude, we have shown that
\[ \Prob( \loc_{0}(A,M, m,\delta) \text{ and } \|A \| \leq M \sqrt{n} ) \ll \left( e C^2 t \right)^m + e^{-c' n} \]
for some $\delta$ which satisfies \eqref{eq:normaldelta}.  In view of Lemma \ref{lemma:norm}, the proof is complete.  
\end{proof}

\begin{proof}[Proof of Theorem \ref{thm:nogaps-small}]
The proof follows closely the proofs of Theorems \ref{thm:eigenvector-small} and \ref{thm:nogaps}.  Without loss of generality, assume $1 \geq t > 0$ (as the bound is trivial when $t \geq 1$).  
Let $M \geq 1$ be the constant from Lemma \ref{lemma:norm}.  Let $\eps, \delta$ be positive values to be chosen later, and take
\[ q := \Prob( \loc_{0}(A,M, m,\delta) \text{ and } \|A \| \leq M \sqrt{n} ). \]
Proposition \ref{prop:delocgeneral} implies that
\[ q \leq \left( \frac{ n e}{m} \right)^m p_o, \]
where $p_0$ satisfies \eqref{eq:p_0def1}.  
Again take $\delta$ in terms of $\eps$ by \eqref{eq:def:delta2}, so that $\delta$ is completely determined once we specify $\eps$.  Applying Theorem \ref{thm:nearlysquare} and using the fact that $1 \leq m \leq \log^2 n$, we find
\[ p_0 \leq (C \eps \log n)^{2m} + Ce^{-cn}. \]
This gives
\[ q \ll \left( \frac{n}{m} e C^2 (\log n)^2 \eps^2 \right)^m + \left( \frac{ne}{m} \right)^m e^{-cn}. \]
Define 
\[ \eps := \frac{\sqrt{t}}{\log n} \sqrt{ \frac{m}{n} }, \]
so that
\[ \eps^2 \frac{n}{m} \log^2 n = t. \]
From \eqref{eq:def:delta2}, we see that this choice of $\eps$ gives
\begin{equation} \label{eq:nogaps-small:delta}
	\delta \gg \frac{m}{n} \eps \geq \frac{\sqrt{t}}{\log n} \left( \frac{m}{n} \right)^{3/2}. 
\end{equation}
In addition, it follows that 
\[ q \ll \left( e C^2 t \right)^m + \left( \frac{ne}{m} \right)^m e^{-cn}. \]

Since $1 \leq m \leq \log^2 n$, 
\[ \left( \frac{ne}{m} \right)^m e^{-cn} \ll e^{-c' n} \]
for some sufficiently small constant $c' > 0$.  We conclude that
\[ \Prob( \loc_{0}(A,M, m,\delta) \text{ and } \|A \| \leq M \sqrt{n} ) \ll \left( e C^2 t \right)^m + e^{-c'n} \]
for some $\delta$ which satisfies \eqref{eq:nogaps-small:delta}.  In view of Lemma \ref{lemma:norm}, the proof is complete.  
\end{proof}

\appendix

\section{Proof of Propositions \ref{prop:largest-coord} and \ref{prop:uniform-vector}} \label{sec:uniform-vector}

\begin{proof}[Proof of Proposition \ref{prop:largest-coord}]
We prove the result for $\R^n$, but an analogous argument applies in $\C^n$.  We model the uniform distribution on the unit sphere by sampling a gaussian vector $g \sim N(0, I_n)$ and normalizing by $\|g\|_2^{-1}$.  Let $\EE$ denote the event that $\|g\|_2 \geq \frac{\sqrt{n}}{10}$.  By standard concentration bounds, we have that
$$
\P(\EE^C) \leq \exp(-c n).
$$
Similarly, for $1 \leq i \leq n$,
$$
\P(|g_i| \geq t) \leq 2 \exp(-t^2/8).
$$
Therefore, 
\begin{align*}
\P\left(|v_i| \geq C \sqrt{\frac{\log n}{n}}\right) &= \P\left(\frac{|g_i|}{\|g\|_2} \geq C \sqrt{\frac{\log n}{n}}\right) \\
&\leq \P(|g_i| \geq C \sqrt{\log n}) + \P(\EE^C) \\
&\leq 1/n^2
\end{align*}
for large enough $C$.  Therefore, applying the union bound, 
$$
\|v\|_{\infty} \leq C \sqrt{\frac{\log n}{n}}
$$
with probability $1- o(1)$.    
\end{proof}

\begin{proof} [Proof of Proposition \ref{prop:uniform-vector}]
We address the complex case first.  As we are not trying to optimize the constant in the exponent of the logarithm, we can conveniently assume that $C \log n \leq m \leq n/ \log n$ for any constant $C$.  We follow the convention that $C, c$ denote absolute constants that may change from line to line.  Again, we model the uniform distribution on the unit sphere in $\C^n$ by considering a random variable $g \sim N_{\C}(0,1)$ that is normalized by $\|g\|_2^{-1}$.  Let $\mathcal{E}$ denote the event that $\|g\|_2 \leq 10 \sqrt{n}$.  We have that
$$
\P(\mathcal{E}^c) \leq \exp(-c n).  
$$
We let $Y_1< \dots < Y_n$ denote the order statistics of the magnitudes of $|g_1|, \dots, |g_n|$.
Therefore,
\begin{align} \label{eq:lowerboundm}
\P\Big(\|v_I\|_2 \leq \frac{C}{\log^c n} \frac{m}{n} \text{ for all } I \subset [n], |I|=m \Big) &\leq \P\Big(\|v_I\|_2 \leq \frac{C}{\log^c n}  \frac{m}{n} \text{ for all } I \text{ and } \mathcal{E} \Big) + \P(\mathcal{E}^c) \\
&\leq \P\left(\sum_{i=1}^m Y_i^2 \leq  \frac{C}{\log^c n}  \frac{m^2}{n} \right) + \exp(-cn). \nonumber
\end{align}
We use a simple counting and grouping argument to control the latter probability.  
We define the following random variables that count the number of coordinates with magnitude in a fixed range.
$$
\eta_k := \sum_{i=1}^n \mathbbm{1}_{\left \{ \frac{\delta}{n} 2^{k-1} \leq |g_i|^2 \leq \frac{\delta}{n} 2^k \right \}}
$$
for $1 \leq k \leq L$ where $\delta := 1/ \log n$  and $L = \lfloor \log_2(m/2\delta) \rfloor$.  Additionally, we denote the probability of a coordinate falling in this range by 
$$
p_k := \P\left( \frac{\delta}{n} 2^{k-1} \leq |g_i|^2 \leq \frac{\delta}{n} 2^k  \right).
$$
As $\eta_k$ is the sum of independent random variables, we have that
$$
\E \eta_k = n p_k
$$
and 
$$
\text{Var}(\eta_k) = n p_k (1 - p_k) \leq n p_k.
$$
By Chebyshev's inequality, for $t > 0$,
\begin{equation} \label{eq:chebyshev}
\P(|\eta_k - \E \eta_k| > t) \leq \frac{n p_k}{t^2}.
\end{equation}
As $|g_1|^2$ is a chi-squared distribution with two degrees of freedom and $\frac{\delta}{n} 2^k \leq 1$ for all $k$, by the bounded density of the chi-squared distribution, we deduce that
\begin{equation} \label{eq:boundeddensity}
p_k = \Theta\left(\frac{\delta}{n} 2^{k-1}\right).
\end{equation}
Let $\mathcal{E}'$ denote the event that
\begin{equation}\label{eq:controleta}
\eta_k \geq c\delta 2^{k-1} - 2^{(2/3)k}  
\end{equation}
for all $1 \leq k \leq L$.
Combining (\ref{eq:chebyshev}) and (\ref{eq:boundeddensity}), we can conclude that $P(\mathcal{E}') \geq 1 - O(\delta)$.  In particular, observe that for $k \geq \log \log n$, say, (\ref{eq:controleta}) implies that $\eta_k \geq \delta 2^{k-2}$ for large enough $n$.

Recall that the cumulative distribution function of a chi-squared distribution with two degrees of freedom is $F(x) = 1 - e^{-x/2}$ for $x \geq 0$.  Therefore, from our choice of $L = \lfloor \log_2(m/2\delta) \rfloor$ we find that
$$
\frac{\delta 2^{L}}{n} \leq \frac{m}{2n} \leq -2 \log(1-m/2n)
$$
Using the cumulative distribution function, we find that 
$$
\P\left(|g_1|^2 \leq \frac{\delta 2^{L}}{n}\right) \leq \P(|g_1|^2 \leq -2 \log(1-m/2n) = m/2n.
$$ 
Thus, by Chernoff's bound, 
$$
P(Y_m \geq \delta 2^{L}) \leq 2 e^{-c m} = O(\delta),
$$
where the last equality follows from the assumptions that $m \geq C \log n$ and $\delta > 1/n$.  
Finally, we have that with probability at least $1 - O(\delta)$, 
$$
\sum_{i=1}^m Y_i^2  \geq \sum_{k=\log \log n}^L \delta 2^{k-1} \frac{\delta}{n} 2^{k-1} = \Omega\left(\frac{\delta^2 2^{2L}}{n}\right) = \Omega\left(\frac{m^2}{n \log^2 n}\right).
$$
We have shown that 
$$
\P\left(\sum_{i=1}^m Y_i^2 \leq  \frac{C}{\log^c n}  \frac{m^2}{n} \right) = O(\delta).
$$
From \ref{eq:lowerboundm}, we can infer that
$$
\P\Big(\|v_I\|_2 \leq \frac{C}{\log^c n} \frac{m}{n} \text{ for all } I \subset [n], |I|=m \Big) = O(\delta) = o(1),
$$
which concludes the proof of the complex case. \newline 


The real case follows the same outline. The choice of parameters is slightly different as the density of the chi squared distribution with one degree of freedom no longer has bounded density but grows as $x^{-1/2}$ near zero.  We use the same notation as in the complex case.   

Again, we can assume that $C \log n \leq m \leq n/ \log n$ for any large constant $C$.  We model the uniform distribution on the sphere in $\R^n$ by considering a random variable $g \sim N_{\mathbb{R}}(0,1)$ that is normalized by $\|g\|_2^{-1}$.  Let $\mathcal{E}$ denote the event that $\|g\|_2 \leq 10 \sqrt{n}$.  We have that
$$
\P(\mathcal{E}^c) \leq \exp(-c n).  
$$
We let $Y_1< \dots < Y_n$ denote the order statistics of the magnitudes of $|g_1|, \dots, |g_n|$.
Therefore,
\begin{align} \label{eq:reallowerboundm}
\P\Big(\|v_I\|_2 \leq \frac{C}{\log^c n} \left(\frac{m}{n} \right)^{3/2} &\text{ for all } I \subset [n], |I|=m \Big) \\
 &\leq \P\Big(\|v_I\|_2 \leq \frac{C}{\log^c n}  \left(\frac{m}{n} \right)^{3/2} \text{ for all } I \text{ and } \mathcal{E} \Big) + \P(\mathcal{E}^c) \nonumber\\ 
&\leq \P\left(\sum_{i=1}^m Y_i^2 \leq  \frac{C}{\log^c n}  \frac{m^3}{n^2} \right) + \exp(-cn). \nonumber
\end{align}  
To control the latter probability, we define the following random variables that count the number of coordinates with magnitude in a fixed range.
$$
\eta_k := \sum_{i=1}^n \mathbbm{1}_{\left \{ \frac{\delta^2}{n^2} 2^{k-1} \leq |g_i|^2 \leq \frac{\delta^2}{n^2} 2^k \right \}}
$$
for $1 \leq k \leq L$ where $\delta := 1/ \log n$  and $L = \lfloor 2 \log_2(m/C^*\delta) \rfloor$ where $C^*$ is a constant to be fixed later.  We denote the probability of a coordinate falling in this range by 
$$
p_k := \P\left( \frac{\delta^2}{n^2} 2^{k-1} \leq |g_i|^2 \leq \frac{\delta^2}{n^2} 2^k  \right).
$$
As $\eta_k$ is the sum of independent random variables, we have that
$$
\E \eta_k = n p_k
$$
and 
$$
\text{Var}(\eta_k) = n p_k (1 - p_k) \leq n p_k.
$$

As a chi-squared distrbuted random variable with one degree of freedom has probability density function $\Theta(x^{-1/2})$ near zero and $\frac{\delta^2}{n^2} 2^k \leq 1$ for all $k$,  we deduce that
\begin{equation} \label{eq:realboundeddensity}
p_k = \Theta\left(\frac{\delta}{n} 2^{k/2}\right).
\end{equation}
Let $\mathcal{E}'$ denote the event that
\begin{equation}\label{eq:realcontroleta}
c \delta 2^{k/2} - 2^{ k/3} \leq \eta_k \leq C \delta 2^{k/2} + 2^{ k/3} 
\end{equation}
for all $1 \leq k \leq L$ and $C, c$ are the implied constants in  (\ref{eq:realboundeddensity}).
By Chebyshev's inequality and (\ref{eq:realboundeddensity}), we can conclude that the probability of (\ref{eq:realcontroleta}) is larger than $1 - O(\delta)$.  In particular, observe that for $k \geq \log \log n$, say, (\ref{eq:controleta}) implies that $\eta_k \geq c \delta 2^{k/2}$ for large enough $n$.

By our choice of $L$, the probability that $\sum_{i=0}^L \eta_k \geq m$ is at most $O(\delta)$ for large enough $C^*$.  By a simple calculation, we also have that $P(Y_1 < \delta^2/n^2) = O(\delta)$.
Therefore, we can have shown that 
\begin{align*}
\P\left(\sum_{i=1}^m Y_i^2 \geq  \sum_{k = \log \log n}^{L} \delta 2^{k/2} \frac{\delta^2 2^k}{n^2}  \right) 
&\geq \P\left(\sum_{i=1}^m Y_i^2 \geq   \frac{C \delta^3 2^{3L/2}}{n^2}  \right) \\
&\geq \P\left(\sum_{i=1}^m Y_i^2 \geq   \frac{C m^{3}}{n^2 \log^c n}  \right) \\
&=   1 - O(\delta).
\end{align*}
From \ref{eq:reallowerboundm}, we can conclude that
$$
\P\Big(\|v_I\|_2 \leq \frac{C}{\log^c n} \left(\frac{m}{n}\right)^{3/2} \text{ for all } I \subset [n], |I|=m \Big) = O(\delta) = o(1),
$$
which finishes the proof of the real case.

\end{proof}

\bibliographystyle{abbrv}
\bibliography{delocalization}

\end{document}